\documentclass[11pt]{article}
\usepackage[letterpaper]{geometry}
\usepackage{amsthm,amsmath,amsfonts}
\usepackage{color,graphicx}
\usepackage{amssymb}
\usepackage{mathrsfs}
\usepackage{subfigure}
\RequirePackage[numbers,sort]{natbib}
\RequirePackage[colorlinks,citecolor=blue,urlcolor=blue]{hyperref}
\bibliographystyle{acmtrans-ims}
\newcommand{\R}{\mathbb{R}}
\newcommand{\D}{\mathbb{D}}

\newcommand{\E}{\mathbb{E}}
\newcommand{\AC}{\text{AC}}

\newcommand{\Prob}{\mathbb{P}}

\providecommand{\abs}[1]{\lvert#1\rvert}

\newtheorem{theorem}{Theorem}[section]
\newtheorem{lemma}{Lemma}[section]
\newtheorem{proposition}{Proposition}

\theoremstyle{definition}
\numberwithin{equation}{section}

\newtheorem{assumption}{Assumption}

\begin{document}
\noindent

\begin{titlepage}

\vspace{-1.0in}

\title{\bf Optimal Control of Brownian Inventory Models with Convex
  Holding Cost: Average Cost Case \footnote{Research supported
in part by NSF grants CMMI-0727400, CMMI-0825840, and CMMI-1030589}}

\author{J. G. Dai\footnote
{H. Milton Stewart School of Industrial and Systems Engineering,
  Georgia
  Institute of Technology, Atlanta, Georgia 30332, U.S.A.; dai@gatech.edu}
\ and \ Dacheng Yao\footnote
{Academy of Mathematics and Systems Science, Chinese
  Academy of Sciences, Beijing, 100190, China; dachengyao@amss.ac.cn} }
\date{October 14, 2011}
\maketitle
\thispagestyle{empty}
\begin{abstract}
  We consider an inventory system in which inventory level fluctuates
  as a Brownian motion in the absence of control.  The inventory
  continuously accumulates cost at a rate that is a general convex
  function of the inventory level, which can be negative when there is
  a backlog.  At any time, the inventory level can be adjusted by a
  positive or negative amount, which incurs a fixed cost and a
  proportional cost. The challenge is to find an adjustment policy
  that balances the holding cost and adjustment cost to minimize the
  long-run average cost. When both upward and downward fixed costs are
  positive, our model is an impulse control problem.  When both fixed
  costs are zero, our model is a singular or instantaneous control
  problem. For the impulse control problem, we prove that a
  four-parameter control band policy is optimal among all feasible
  policies. For the singular control problem, we prove that a
  two-parameter control band policy is optimal.

  We use a lower-bound approach, widely known as ``the verification
  theorem'', to prove the optimality of a control band policy for both
  the impulse and singular control problems. Our major contribution is
  to prove the existence of a ``smooth'' solution to the free boundary
  problem under some mild assumptions on the holding cost function.
  The existence proof leads naturally to a numerical algorithm to
  compute the optimal control band parameters.
  We demonstrate that the lower-bound approach also works for Brownian inventory
  model in which no inventory backlog is allowed. In a companion
  paper, we will show how   the lower-bound approach can be
  adapted to study a  Brownian inventory model under a discounted
  cost criterion.


\end{abstract}

\noindent
\textbf{AMS classifications:} 60J70, 90B05, 93E20

\medskip 

\noindent
\textbf{Keywords:} impulse control, singular control, instantaneous
control, control band, verification theorem,
free boundary problem, smooth pasting, quasi-variational inequality
\end{titlepage}

\section{Introduction}
\label{sec:introduction}
This paper is concerned with optimal control of Brownian inventory
models under the long-run average cost criterion. It serves two
purposes. First, it provides a tutorial on the powerful lower-bound
approach, known as the ``the verification theorem'', to proving the
optimality of a control band policy among all feasible policies. The
tutorial is rigorous and, except the standard It\^o formula, self
contained.  Second, it contributes to the literature by proving the
existence of a ``smooth'' solution to the free boundary problem with a
general convex holding cost function.  The existence proof leads
naturally algorithms to compute the optimal control band parameters.
The companion paper \cite{DaiYao11b} studies the optimal control of
Brownian inventory models under a discounted cost criterion.

\subsection*{The Model Description}
\label{sec:brown-contr-probl}

In this paper and the companion paper \cite{DaiYao11b}, the inventory \emph{netput} process is assumed to follow a
Brownian motion with drift $\mu$ and variance $\sigma^2$. The netput
process captures the difference between regular supplies, possibly
through a long term contract, and customer demands. Controls are
exercised on the netput process to keep the inventory at desired
positions. The controlled process, denoted by $Z=\{Z(t), t\ge0\}$, is
called the \emph{inventory process} in this paper. For each time $t\ge
0$, $Z(t)$ is interpreted as the inventory level at time $t$ although
$Z(t)$ can be negative, in which case $|Z(t)|$ represents the
inventory backlog at time $t$.  We assume that the holding cost
function $h:\R\to \R_+$ is a \emph{general} convex function. Thus, $
\int_0^t h(Z(s))ds$ is the cumulative inventory cost by time $t$.

Inventory position is assumed to be adjustable, either upward or
downward. All adjustments are realized immediately without any
leadtime delay.  Each upward adjustment with amount $\xi>0$ incurs a
cost $K+k\xi$, where $K\ge 0$ and $k>0$ are the fixed cost and the
variable cost, respectively, for each upward adjustment. Similarly,
each downward adjustment with amount $\xi$ incurs a cost of $L+\ell
\xi$ with fixed cost $L\ge 0$ and variable cost $\ell>0$.  The
objective is to find some control policy that balances the inventory
cost and the adjustment cost so that the long-run average total cost
is  minimized.

In describing our Brownian control problems, we have used the
inventory terminology in supply chain management. One could describe
such control problems in cash flow management. In this case, $Z(t)$
represents the cash amount at time $t\ge 0$. There are a large number
of papers in the economics literature that have studied the Brownian
control problems (e.g.  Dixit~\cite{Dixit91}). Readers are referred to
Stokey~\cite{Stokey09} and the references there for a variety of
economic applications of Brownian control problems. While the
discounted cost criterion is appropriate for cash flow management, the
long-run average cost criterion is natural for many
production/inventory problems.

When both fixed costs $K$ and $L$ are positive, it is clear that
non-trivial feasible control policies should limit the number of
adjustments to be finite within any finite time interval.  Under such
a control policy, inventory is adjusted at a sequence of discrete
times and the resulting control problem is termed as the
\emph{impulse} control of a Brownian motion. When both fixed costs
$K=0$ and $L=0$, it can be advantageous for the system to make an
``infinitesimal'' amount of adjustment at any moment. Indeed, as it
will be shown in Section~\ref{sec:instanteneous}, an optimal policy will
make an uncountable number of
adjustments within a finite time interval. The resulting control
problem is termed as the \emph{singular} or \emph{instantaneous}
control of a Brownian motion. In this paper, we treat impulse  and
singular control of a Brownian motion in a single
framework. Conceptually, one may view the singular control problem  as a
limit of a sequence of impulse control problems as fixed costs
$K\downarrow 0$ and $L\downarrow 0$. Such a connection between impulse
and singular control problems allow us to solve a \emph{mixed
impulse-singular} control problem (for example,  $K>0$ and $L=0$)
without much additional effort.

\subsection*{Non-Linear Holding Cost}
\label{sec:non-linear-holding}

When the holding cost function $h$ is given by
\begin{equation}
  \label{eq:linearh}
  h(x) =
  \begin{cases}
    -p x & \text{ if } x <0, \\
    c x & \text{ if } x \ge 0
  \end{cases}
\end{equation}
for some constants $p>0$ and $c>0$, we call $h$ in (\ref{eq:linearh})
a linear holding cost function, even though $h(x)$ in
(\ref{eq:linearh}) is piecewise linear in inventory level $x$.
 With this holding cost
function, inventory backlog cost is linear and inventory excess cost
is also linear, but  $h(x)$ is not differentiable at $x=0$. Although
many papers focused on linear holding cost function
(e.g.~\cite{HarrisonSellkeTaksar83}), there are ample
applications that motivate non-linear holding cost function. For
example, \cite{BuckleyKorn89} and \cite{PliskaSuzuki04} studied optimal
index tracking of a benchmark index when there are transaction
costs.  An impulse control problem with quadratic holding cost arises
naturally in their studies. Quadratic holding cost and general convex
holding cost also arise in
economic papers; see, for example,
\cite{BertolaCabellero90,Tsiddon93,PlehnDujowich05}.

\subsection*{Optimal Policy Structure}

For an impulse Brownian control problem under the long-run average
cost criterion, we prove in Section~\ref{sec:impulse-controls} that a
\emph{control band} policy
$\varphi=\{d,D,U,u\}$ is optimal among all feasible policies.  Under
the control band policy $\varphi$, an adjustment is placed so as to
bring the inventory up to level $D$ when the inventory level drops to
level $d$ and to bring the inventory down to level $U$ when the
inventory level rises to level $u$.
For a singular
Brownian control problem, we show in Section~\ref{sec:instanteneous}
that the optimal policy is a \emph{degenerate} control band policy
with two free parameters $D=d$ and $U=u$.
When the inventory level is restricted to be always
nonnegative, we show in Section \ref{sec:nonnegative} that the optimal
policy for an impulse Brownian control problem is again a control band
policy. Depending on the holding cost function $h$, this control
band policy sometimes, but not always, has only three free parameters
$D$, $U$ and $u$ that need to be characterized with the lowest
boundary $d=0$. Although we will not explicitly study the mixed
impulse-singular Brownian control problems, it is clear from our
proofs that a degenerate control band policy with three parameters  is
optimal.

\subsection*{The Lower-Bound Approach and the Free Boundary Problem}

This paper promotes a three-step, lower bound approach to solving
Brownian control problems under the long-run average cost
criterion. In the first step, we prove Theorem \ref{thm:lowerbound}
showing that if there exist a constant $\gamma$ and a ``smooth'' test
function $f$ that is defined on the entire real line (or the positive
half line when inventory is not allowed to be backlogged) such that
$f$ and $\gamma$ jointly satisfy some differential inequalities, then
the long-run average cost under any feasible policy is at least
$\gamma$. This theorem is formulated and proved for all impulse,
singular and mixed impulse-singular control problems.  In the second
step, we show in Theorems \ref{thm:control band} and \ref{thm:controllimit} that for a given control band policy, its long-run
average cost can be computed as a solution to a Poisson equation. This
equation is a second order ordinary differential equation (ODE) with
given boundary conditions at the boundary points of the band. As a
part of the solution to the Poisson equation, we also obtain the
\emph{relative value function}. The relative value function can
naturally be extended to the entire real line, but the extended
function may not be continuously differential at the boundary points
of the control band. In the third step, we search for a control band
policy such that the corresponding relative value function can indeed be
extended smoothly as a function $f$ on the entire real line.
Furthermore, this smooth function $f$, together with the long-run average
cost under the control band policy, satisfies the differential
inequalities in step 1 within the entire real line. Clearly, if the
control band policy in step 3 can be found, it must be an optimal
policy by Theorem \ref{thm:lowerbound}. The lower-bound theorem,
Theorem~\ref{thm:lowerbound}, is known as the ``verification theorem''
in literature.

Step 3 is the most critical step in the three-step approach. In order
to make the relative value function smoothly extendible to the entire
real line, the parameters of the control band must be carefully
selected.  These parameters serve as the boundary points of the ODE and
they themselves need to be determined.  The smoothness requirements
impose conditions of the ODE solution at these yet to be found
boundary points. Thus, the ODE in step 3 is known as the free boundary
ODE problem. Solving the free boundary problem to find the optimal
parameters is also known as the ``smooth pasting'' method
\cite{BertolaCabellero90}.  Solving
a free boundary problem is often technically difficult.  The number of
free parameters of an optimal control band policy dictates the level
of difficulty in solving the free boundary problem.  Many papers in
the literature left it unsolved (e.g.~\cite{Dixit91,Richard77}),
assuming there
is a solution to the
free boundary problem with a certain smoothness property.

\subsection*{Contributions}

The Brownian inventory control problem is now a classical problem,
starting from Bather~\cite{Bather66} thirty five years ago. We will
survey the research area in the next several paragraphs. In addition
to providing a self contained tutorial on the lower-bound approach to
studying optimal control problems, our paper contributes significantly
in the following areas.  (a) Under a general convex holding cost
function with some minor assumptions, we rigorously prove the
existence of a control band policy that is optimal for both the
impulse and singular control problems under the long-run average cost
criterion.  (b) Under the general convex holding cost function, we
have proved the existence of a solution to the four-parameter
free-boundary problem.  Our existence proof leads naturally to
algorithms for computing optimal control band parameters.  These
algorithms reduce to root findings for continuous, monotone
functions. Thus, the convergence of these algorithms are guaranteed.
We are not aware of any paper that proved the existence of a solution
to the four-parameter free boundary problem under the long-run average
cost criterion.  In the discounted setting,
\cite{ConstantinidesRichard78} solved the four-parameter free boundary
problem when $h$ is linear, and \cite{Baccarin02} solved the problem
when $h$ is quadratic.  Recently, Feng and Muthuraman
\cite{FengMuthuraman10} developed an algorithm to numerically solve
the four-parameter free boundary problem for the discounted Brownian
control problem. They illustrate the convergence of their algorithm
through some numerical examples. However, the convergence of their
algorithm was not established.  (c) Under the long-run average cost
criterion, our lower-bound approach provides a unified treatment for
both the impulse and singular control problems, with and without
inventory backlog. In particular, we do not need to employ vanishing
discount approach \cite{schal93,FeinbergLewis07,WoongheeTimHuh11} to
study the long-run average cost problems.  In her book,
Stokey~\cite{Stokey09} summarizes both the impulse and instantaneous
controls of Brownian motion with a general convex holding cost
function. She focused on the discounted cost problems, and employed
the vanishing discount approach to deal with the long-run average cost
problems.  It is appealing that our current paper studies the long-run
average cost problem directly, and characterizes the optimal
parameters directly without going through the vanishing discount
procedure.


\subsection*{Literature Review}
\label{sec:literature-review}

The lower-bound approach were used in
\cite{Taksar85,OrmeciDaiVandeVate08} under a long-run average cost
criterion and in \cite{HarrisonSellkeTaksar83,HarrisonTaksar83} under
a discounted cost criterion.  The approach is essentially the same as
the quasi-variational inequality (QVI) approach that was pioneered by
Bensoussan and Lions~\cite{BensoussanLions75}. The QVI approach was
systematically developed in a French book that was later translated
into English (see \cite{BensoussanLions84}). An appealing feature of
the QVI approach is that it is sufficient to solve a QVI problem in
order to obtain an optimal policy for an inventory control problem,
and this sufficiency is established in \cite{BensoussanLions84}.  The
QVI problem is a pure analytical problem that is closely related the
free boundary problem.  Many authors directly start with the QVI
problems, relying on the ``verification theorem'' developed in
\cite{BensoussanLions84}; see for example,
\cite{Sulem86,BensoussanLiuSethi05,
  Benkherouf07, BenkheroufBensoussan09}.  The potential drawback of
this approach is that when the formulation of a Brownian control
problem is slightly different from the setting in
\cite{BensoussanLions84}, one may have to developed a new verification
theorem, presumably mimicking the development  in the
book. In contrast, our lower-bound approach allows us to provide a
self-contained, rigorous proof simultaneously for impulse, singular
and mixed control problems. It also allows one to directly see how the
smooth requirement  of a solution to the free-boundary problem is
used. We believe the lower-bound approach is easier to be generalized
to high dimensional Brownian control problems.

The impulse control problem with both upward and downward adjustments
was studied as early as 1976 and 1978 in two papers by
Constantinides~\cite{Constantinides76} and Constantinides et
al.~\cite{ConstantinidesRichard78}.  The first paper studies the
long-run average cost objective and the second paper studies the
discounted cost objective. Both papers assume the holding cost
function is linear as given in (\ref{eq:linearh}).  Under this
holding cost function, the optimal control band parameters can be
explicitly characterized.  Baccarin \cite{Baccarin02} studies
discounted impulse Brownian control problem with quadratic inventory
cost function.  When the inventory is restricted to be nonnegative,
but still under the linear holding cost assumption
(\ref{eq:linearh}), Harrison et al.~\cite{HarrisonSellkeTaksar83}
studies the discounted cost, impulse Brownian control problem whereas
Ormeci et al.~\cite{OrmeciDaiVandeVate08} studies the long-run average
cost problem. Under the linear holding cost function assumption, the
optimal policy is a degenerate control band policy $\{0, D, U, u\}$,
where three optimal parameters $D, U, u$ can be determined
explicitly. However, under our general convex holding cost
assumption, the optimal policy for the impulse control problem without
inventory backlog is again a control band policy $\{d, D, U, u\}$,
with $d$ sometimes being strictly positive.  Harrison and
Taksar~\cite{HarrisonTaksar83} and Taksar \cite{Taksar85} study the
singular Brownian control problem under a general convex inventory
cost function assumption. The former paper studies the discounted cost
problem and the latter studies the long-run average cost
problem. Taksar \cite{Taksar85} characterizes the optimal control band
parameters through the optimal stopping time to a stochastic game
without solving the two-parameter free boundary problem.
As in \cite{Taksar85}, Stokey \cite{Stokey09}
characterizes her optimal parameters through a stopping time
problem without solving  the four-parameter free boundary problem.
These stopping time characterizations do not easily lead to any
numerical algorithm to
compute two optimal parameters.  Richard~\cite{Richard77} studies an
impulse control of a general one-dimensional diffusion process. He
assumes without proof the existence of a solution to a
quasi-variational inequality problem with certain regularity property
in order to characterize an optimal policy.  Kumar and Muthuraman
\cite{KumarKumar04} develop a numerical algorithm to solve
high-dimensional singular control problems.  Vickson \cite{Vickson86}
studies a cycling problem with Brownian motion demand.


In his pioneering paper, Bather \cite{Bather66} studies the impulse
Brownian motion control problem \emph{without} downward adjustment,
under the long-run average cost criterion. For most inventory
problems, without downward adjustment is a natural setting.  Under a
general holding cost function, he suggests that an ($s,S$) policy is
optimal and derives equations that characterize  the optimal parameters
$s$ and $S$.  Many authors have generalized this paper to various
settings, to discounted cost problems with linear holding cost in
\cite{Sulem86}, to discounted cost problems with and without inventory
backlog in \cite{Chao92}, to discounted cost problems under the
general convex holding cost function assumption in
\cite{Benkherouf07}, to discounted cost problems with positive
constant leadtime in \cite{Bar-IlanSulem95}, to compound Poisson and
diffusion demand processes in
\cite{BensoussanLiuSethi05,BenkheroufBensoussan09}.  Because
there is no downward adjustment in these problems, the optimal policy
has two parameters and the resulting two-parameter free boundary
problem can be solved much easier than the four-parameter one.

\subsection*{Paper Organization}

The rest of this paper is organized as follows.  In Section
\ref{sec:model}, we define our Brownian control problem in a unified
setting that includes impulse, singular and mixed impulse-singular
controls.  In Section \ref{sec:ito-formula-lower} we present a version
of It\^{o} formula that does not require the test function  $f$ be
$C^2$ function.  A lower
bound for all feasible policies is established in Section
\ref{sec:lowerbound}.  Section~\ref{sec:impulse-controls} devotes to
impulse  control problems that allow inventory backlog under the
long-run average cost criterion. Section \ref{sec:controlBand} shows
that under a control band policy, a Poisson equation can produce a
solution that gives both the long-run average cost and the
corresponding relative value function. Under the assumption that a
free-boundary problem has a unique solution that has desired
regularity properties, Section \ref{sec:optimal} proves that there is
a control band policy whose long-run average cost achieves  the lower
bound. Thus, the control band policy is optimal among all feasible
policies.  Section~\ref{sec:optimal-control-band} is a lengthy one
that devotes to the existence proof of the solution to the free-boundary
problem. In the section, the parameters for the optimal control band
policy are characterized.  Section~\ref{sec:optimal-control-band}
constitutes the main technical contribution of this paper.
Section~\ref{sec:instanteneous} solves the singular control
problem. This section is short, essentially becoming a special case of
Section \ref{sec:impulse-controls} when both $K=0$ and $L=0$.
Section~\ref{sec:nonnegative} deals with impulse control problems when
inventory is not allowed backlogged. 
  Finally,
Section \ref{sec:conclusions} summarizes  this paper and discusses a
few extensions.

\section{Brownian Control Models}
\label{sec:model}

Let $X=\{X(t), t\ge 0\}$ be a Brownian motion
with drift $\mu$ and variance $\sigma^2$, starting from
$x$. Then, $X$ has the following representation
\begin{displaymath}
  X(t) =  x + \mu t + \sigma W(t), \quad t\ge 0,
\end{displaymath}
where $W=\{W(t), t\ge 0\}$ is a standard Brownian
motion that has drift
$0$, variance $1$, starting from $0$.
We assume $W$ is  defined on  some filtered probability space
$(\Omega, \{{\cal F}_t\},  {\cal F}, \Prob)$ and $W$ is an
 $\{{\cal F}_t\}$-martingale. Thus, $W$ is also known as  an  $\{{\cal
   F}_t\}$-standard Brownian motion.
 We use $X$ to model the
\emph{netput process} of the firm.  For each $t\ge 0$, $X(t)$
represents the inventory level at time $t$ if no control has been
exercised by time $t$. The netput process will be controlled and the
actual inventory level at time $t$, after controls has been exercised,
is denoted by $Z(t)$.  The controlled process is denoted by $Z=\{Z(t),
t\ge 0\}$. With a slight abuse of terminology,
we call $Z(t)$ the inventory level at time $t$, although when
$Z(t)<0$, $\abs{Z(t)}$ is the backorder level at time $t$.

Controls are dictated by a policy.  A policy $\varphi$ is a pair of
stochastic processes $(Y_1, Y_2)$ that satisfies the following three
properties: (a) for each sample path $\omega\in \Omega$, $Y_i(\omega,
\cdot)\in \D$, where $\D$ is the set of  functions on
$\R_+=[0,\infty)$ that are right continuous on $[0, \infty)$
and have left limits in $(0, \infty)$, (b) for each $\omega$,
$Y_i(\omega, \cdot)$ is a nondecreasing function, (c) $Y_i$ is adapted
to the filtration $\{{\cal F}_t\}$, namely, $Y_i(t)$ is ${\cal
  F}_t$-measurable for each $t\ge 0$.  We call $Y_1(t)$ and $Y_2(t)$
the cumulative \emph{upward} and \emph{downward} adjustment,
respectively, of the
inventory in $[0, t]$. Under a given policy $(Y_1, Y_2)$, the
inventory level at time $t$ is given by
\begin{equation}
  \label{eq:semimartingaleRep}
  Z(t) = X(t) +Y_1(t) -Y_2(t)=x+\sigma W(t)+\mu t +Y_1(t)-Y_2(t) ,
  \quad t\ge 0.
\end{equation}
Therefore,  $Z$ is a semimartingale, namely,
a martingale $\sigma W$ plus a process that is of bounded variation.

A point $t\ge 0$ is said to be an \emph{increasing point} of $Y_1$ if
$Y_1(s)-Y_1(t-)>0$ for each $s>t$, where $Y_1(t-)$ is the left limit
of $Y_1$ at $t$ with convention that $Y_1(0-)=0$. When $t$ is an
increasing point of $Y_1$, we call it an upward adjustment time.
Similarly, we define an increasing point of $Y_2$ and call it a
downward adjustment time.  Let $N_i(t)$ be the cardinality of the
set
\begin{displaymath}
\{s\in [0, t]: Y_i \text{ increases at $s$}\}, \quad i=1, 2.
\end{displaymath}
In general, we allow an upward or downward adjustment at time
$t=0$.  By convention,  we set $Z(0-)=x$ and call $Z(0-)$ the
\emph{initial  inventory level}. By (\ref{eq:semimartingaleRep}),
\begin{displaymath}
  Z(0) = x + Y_1(0)-Y_2(0),
\end{displaymath}
which can be different from the initial inventory level $Z(0-)$.

There are two types of costs associated with a control. They are
\emph{fixed costs} and \emph{proportional costs}. We assume that each
upward adjustment incurs a fixed cost of $K\ge 0$ and each downward
adjustment incurs a fixed cost of $L\ge 0$.  In addition, each unit of
upward adjustment incurs a proportional cost of $k>0$  and each unit
of downward adjustment incurs a proportional cost of $\ell>0$. Thus, by
time $t$, the system incurs the cumulative proportional cost $k
Y_1(t)$ for upward adjustment and the cumulative proportional cost
$\ell Y_2(t)$ for downward adjustment. When $K>0$, we are only
interested in policies such that $N_1(t)<\infty$ for each $t>0$;
otherwise, the total cost would be infinite in the time
interval $[0, t]$. Thus, when $K>0$, we restrict upward controls that
have a
finitely many upward adjustment in a finite interval.
This is equivalent to requiring $Y_1$ to be a piecewise constant
function on each sample path.  Under such an
upward control, the upward adjustment times can be listed as a
discrete sequence $\{T_1(n):n\ge 0\}$,  where the $n$th upward
adjustment time can be defined recursively via
\begin{displaymath}
  T_1(n) = \inf\{t> T_1(n-1): \Delta Y_1(t)>0\},
\end{displaymath}
where, by convention, $T_1(0)=0$ and $\Delta Y_1(t)=Y_1(t)-Y_1(t-)$.
The amount of the $n$th upward adjustment is denoted by
\begin{displaymath}
  \xi_1(n)= Y_1(T_1(n))-Y_1(T_1(n)-) \quad n=0, 1, \ldots.
\end{displaymath}
It is clear that specifying such a upward adjustment policy $Y_1=\{Y_1(t), t\ge
0\}$ is equivalent to specifying a sequence of $\{(T_1(n), \xi_1(n)):
n\ge 0\}$. In particular, given the sequence, one has
\begin{equation}
  \label{eq:Y1N1xi1}
  Y_1(t) = \sum_{i=0}^{N_1(t)} \xi_1(i),
\end{equation}
and $N_1(t)=\max\{n\ge 0:T_1(n)\le t\}$. Thus, when $K>0$, it is sufficient to specify the sequence
$\{(T_1(n), \xi_1(n)):
n\ge 0\}$  to describe an upward adjustment policy. Similarly, when
$L>0$, it is sufficient to specify the sequence
$\{(T_2(n), \xi_2(n)):
n\ge 0\}$  to describe a downward adjustment policy
and
\begin{equation}
  \label{eq:Y2N2xi2}
  Y_2(t) = \sum_{i=0}^{N_2(t)} \xi_2(i).
\end{equation}
 Merging these two
sequences, we have  the
sequence $\{(T_n, \xi_n), n\ge 0\}$, where
$T_n$ is the $n$th adjustment time of the inventory and $\xi_n$ is the
amount of adjustment at time $T_n$. When $\xi_n>0$, the $n$th
adjustment is an upward adjustment and when $\xi_n<0$, the $n$th
adjustment is a downward adjustment. The policy  $(Y_1, Y_2)$ is
adapted if  $T_n$ is an $\{{\cal F}_{t}\}$-stopping time and
 each adjustment $\xi_n$ is $\mathscr{F}_{T_n-}$ measurable,

In addition to the adjustment cost, the system is assumed to
incur  the holding cost at rate $h(x)$: when the inventory level
is at $Z(t)=x$, the system incurs a cost of $h(x)$ per unit of time.
Therefore, the cumulative holding cost in $[0, t]$ is
\begin{displaymath}
  \int_0^t h(Z(s)) ds.
\end{displaymath}
Under a feasible policy $\varphi=\{(Y_1(t), Y_2(t)\}$
with initial inventory level $Z(0-)=x$,
the long-run average cost $\AC(x, \varphi)$ is
\begin{equation}
\AC(x,\varphi)=\limsup_{t\rightarrow
  \infty} \frac{1}{t}\mathbb{E}_x\Big[\int_0^{t}h(Z(s))ds+
KN_1(t)+LN_2(t) + k Y_1(t)+\ell Y_2(t)
\Big],\label{eq:AC-1}
\end{equation}
where $\E_x$ is the expectation operator conditioning the initial
inventory level $Z(0-)=x$.
As mentioned earlier,  when $K>0$ and $L>0$, it is sufficient to
restrict feasible policies to be impulse type given
 in (\ref{eq:Y1N1xi1}) and (\ref{eq:Y2N2xi2}). Such a Brownian
inventory control model is called the \emph{impulse Brownian control model}.
When $K=0$ and $L=0$, it turns out the under an optimal policy,
 $N_1(t)=\infty$ and $N_2(t)=\infty$ with positive probability for
 each $t>0$. The corresponding control problem is called the
 \emph{instantaneous Brownian control model} or \emph{singular Brownian
   control model}.

In this paper, we make the following assumption on the holding cost
function $h:\R\to \R_+$.
\begin{assumption} \label{assumption:h}
Assume that the continuous holding cost function $h:\R\to \R^+$ satisfies the
following conditions:
(a) it is convex; (b) there
exists an $a$ such
that $h\in C^2(\R)$ except at $a$, and $h(a)=0$; (c)  $h'(x)< 0$ for $x<a$ and
$h'(x)> 0$ for $x>a$; 
(d) When $\lambda=\frac{2\mu}{\sigma^2}\neq 0$,  $h'(x)$ has
smaller order than $e^{-\lambda x}$, that is
\begin{equation}
  \label{eq:hIntegral}
  \int_{-\infty}^a \abs{h'(y)} e^{\lambda(y-a)}dy <\infty \text{ if }
  \lambda=\frac{2\mu}{\sigma^2}>0
\end{equation}
and
\begin{equation}
  \label{eq:hIntegral2}
  \int_a^{\infty} \abs{h'(y)} e^{\lambda(y-a)}dy <\infty \text{ if }
  \lambda=\frac{2\mu}{\sigma^2}<0.
\end{equation}
\end{assumption}

We only consider  feasible  policies that satisfy
\begin{eqnarray}
  \label{eq:regularPolicy1}
&& \mathbb{E}_x\bigl[ Y_i(t)\bigr]<\infty \quad  i=1,2,\\
&& \E_x[N_1(t)]<\infty \text{ when }K>0 \quad \text{and} \quad
\E_x[N_2(t)]<\infty \text{ when } L>0
  \label{eq:regularPolicy2}
\end{eqnarray}
for each $t\ge 0$.
Otherwise, $\AC(x, \varphi)=\infty$.
In some applications,
one might require inventory level be nonnegative always, namely,
\begin{eqnarray*}
Z(t)\geq 0,\ \ \mbox{for } t\geq 0.
\end{eqnarray*}

\section{The It\^{o} Formula}
\label{sec:ito-formula-lower}
In this section, we first state a version of It\^{o}'s formula. We
then provide a lower bound result for the long-run average cost in
(\ref{eq:AC-1}).  Recall that for a function $g\in\D$,
it is right
continuous on $[0, \infty)$ and has left limits in $(0,\infty)$.
 We use $g^c$ to denote the continuous part of $g$, namely,
\begin{displaymath}
  g^c(t)= g(t) - \sum_{0\le s \le t} \Delta g(s) \quad \text{ for }
  t\ge 0.
\end{displaymath}
Here we assume $g(0-)$ is well defined.
Recall under any feasible policy $\varphi=(Y_1, Y_2)$, the inventory
process $Z=\{Z(t):t\ge 0\}$ has the semimartingale representation
(\ref{eq:semimartingaleRep}). Because Brownian motion has continuous
sample paths, we have
\begin{equation}
  \label{eq:Zc}
  Z^c(t) = X(t) + Y_1^c(t) -Y_2^c(t)\quad \text{ for }t\ge 0.
\end{equation}


\begin{lemma}
\label{lem Ito}
Assume that $f\in C^1(\R)$ and $f'$ is absolutely continuous such that
$f'(b)-f'(a)=\int_a^bf''(u)du$ for any $a<b$ with $f''$ locally in $L^1$.
Then
\begin{eqnarray}
  f(Z(t)) & =& 
f(Z(0))+\int_0^t \Gamma f(Z(s))ds + \sigma \int_0^t
  f'(Z(s))dW(s) \nonumber\\
&&{}+ \int_0^t f'(Z(s-)) dY_1^c(s) - \int_0^t
  f'(Z(s-))dY^c_2(s)+ \sum_{0< s \le t} \Delta
f(Z(s)), \label{eq:ito}
\end{eqnarray}
where
\begin{equation}
  \label{eq:Gamma}
\Gamma f(x) = \frac{1}{2}\sigma^2 f''(x) + \mu f'(x), \quad  \text{
  for each } x\in \R \text{ such that $f''(x)$ exists},
\end{equation}
is the generator of the $(\mu, \sigma^2)$-Brownian motion $X$
and $\int_0^t f'(Z(s))dW(s)$  is interpreted as the It\^{o} integral.
\end{lemma}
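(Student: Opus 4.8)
The plan is to prove this generalized It\^o formula by reducing it to the classical It\^o formula for $C^2$ functions via mollification. First I would decompose $Z$ according to its semimartingale structure \eqref{eq:semimartingaleRep}, writing $Z(t) = Z^c(t) + \sum_{0<s\le t}\Delta Z(s)$, where the continuous part $Z^c$ is given by \eqref{eq:Zc} and has quadratic variation $\langle Z^c\rangle_t = \sigma^2 t$, since the bounded-variation pieces $Y_1^c$, $Y_2^c$, and $\mu t$ contribute nothing to the quadratic variation and $X$ contributes $\sigma^2 t$. Thus for a genuinely $C^2$ function $g$, the standard It\^o formula for semimartingales gives
\begin{align*}
  g(Z(t)) &= g(Z(0)) + \int_0^t g'(Z(s-))\,dZ(s) + \tfrac12\int_0^t g''(Z(s))\,d\langle Z^c\rangle_s \\
  &\quad{}+ \sum_{0<s\le t}\bigl[\Delta g(Z(s)) - g'(Z(s-))\Delta Z(s)\bigr].
\end{align*}
Splitting $dZ(s) = \mu\,ds + \sigma\,dW(s) + dY_1^c(s) - dY_2^c(s) + d(\text{jump part})$ and noting that the jump part of the $\int g'(Z(s-))\,dZ(s)$ term exactly cancels the $-g'(Z(s-))\Delta Z(s)$ inside the sum, one recovers \eqref{eq:ito} with $\Gamma$ as in \eqref{eq:Gamma} — but so far only for $g\in C^2$.

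The second step is to remove the $C^2$ hypothesis. Given $f\in C^1(\R)$ with $f'$ absolutely continuous and $f''\in L^1_{loc}$, I would take a standard mollifier $\rho_\varepsilon$ and set $f_\varepsilon = f * \rho_\varepsilon \in C^\infty(\R)$. Then $f_\varepsilon \to f$ and $f_\varepsilon' \to f'$ locally uniformly, while $f_\varepsilon'' \to f''$ in $L^1_{loc}$. Apply the $C^2$ version of \eqref{eq:ito} to each $f_\varepsilon$ and pass to the limit $\varepsilon\downarrow 0$ term by term. The terms $f_\varepsilon(Z(t))$, $f_\varepsilon(Z(0))$, the drift piece $\int_0^t \mu f_\varepsilon'(Z(s))\,ds$, the bounded-variation integrals $\int_0^t f_\varepsilon'(Z(s-))\,dY_i^c(s)$, and the jump sum $\sum_{0<s\le t}\Delta f_\varepsilon(Z(s))$ all converge by locally uniform convergence of $f_\varepsilon, f_\varepsilon'$ together with dominated convergence (using \eqref{eq:regularPolicy1} to control $\E_x[Y_i(t)]$, and the fact that $Z$ stays in a compact set with high probability, so one may localize by stopping times $\tau_M = \inf\{t : |Z(t)|\ge M\}$). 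The It\^o-integral term $\sigma\int_0^t f_\varepsilon'(Z(s))\,dW(s)$ converges in $L^2$ (hence in probability) by the It\^o isometry, again after localization, since $f_\varepsilon'(Z(s)) \to f'(Z(s))$ boundedly on $\{s \le \tau_M\}$.

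The delicate term is $\tfrac12\sigma^2\int_0^t f_\varepsilon''(Z(s))\,ds$. Here I only have $f_\varepsilon'' \to f''$ in $L^1_{loc}(\R)$, not pointwise, so I cannot pass to the limit pathwise. The plan is to pass to the limit in expectation (after localization at $\tau_M$): writing the occupation-time formula / using that the continuous semimartingale $Z^c$ has a jointly measurable local time $\ell^y_t$ with $\int_0^{t\wedge\tau_M} g(Z(s))\,d\langle Z^c\rangle_s = \int_{-M}^{M} g(y)\,\ell^y_{t\wedge\tau_M}\,dy$, one gets
\begin{displaymath}
  \E_x\Bigl[\,\int_0^{t\wedge\tau_M}\!\! \bigl(f_\varepsilon''(Z(s)) - f''(Z(s))\bigr)\,ds\Bigr]
  = \E_x\Bigl[\,\int_{-M}^{M}\!\! \bigl(f_\varepsilon''(y) - f''(y)\bigr)\,\tfrac{1}{\sigma^2}\ell^y_{t\wedge\tau_M}\,dy\Bigr],
\end{displaymath}
and since $\E_x[\ell^y_{t\wedge\tau_M}]$ is bounded uniformly in $y$ (standard for Brownian-type local time), the right side is bounded by $\frac{C}{\sigma^2}\|f_\varepsilon'' - f''\|_{L^1(-M,M)} \to 0$. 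This is the main obstacle: one must justify the occupation-times formula for $Z^c$ and the boundedness of the expected local time, for which I would cite the standard theory of local time for continuous semimartingales (e.g.\ via Tanaka's formula). Combining, all terms converge, yielding \eqref{eq:ito} first for $t\wedge\tau_M$ and then, letting $M\to\infty$ with monotone/dominated convergence, for all $t$. Finally I should remark that the same mollification argument shows each individual term in \eqref{eq:ito} is well defined under Assumption~\ref{assumption:h} and conditions \eqref{eq:regularPolicy1}--\eqref{eq:regularPolicy2}, so the identity holds $\Prob_x$-a.s.
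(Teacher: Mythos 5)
Your proposal is correct in spirit but takes a genuinely different — and much heavier — route than the paper. The paper's proof simply invokes Theorem~71 of Protter (p.~221) together with the comment on p.~70, which already extends It\^o's formula to the case $f\in C^1$ with $f'$ absolutely continuous and $f''\in L^1_{loc}$; the remaining work in the paper is purely algebraic (computing $[Z^c,Z^c]=\sigma^2 t$, expanding $\int f'(Z(s-))\,dZ^c$, and replacing $Z(s-)$ by $Z(s)$ inside the $ds$- and $dW$-integrals using countability of the jump times). You instead re-derive that extension from scratch: start from the $C^2$ Meyer–It\^o formula, mollify, and pass to the limit. That is a legitimate self-contained alternative, and it is useful pedagogically since it makes visible why the occupation-time density (local time) is exactly what controls the troublesome $\int f_\varepsilon''(Z(s))\,ds$ term. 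The trade-off is that you end up re-proving a textbook result, and in doing so you import machinery (localization at $\tau_M$, the expectation bound on $\E_x[\ell^y_{t\wedge\tau_M}]$, and the integrability conditions \eqref{eq:regularPolicy1}--\eqref{eq:regularPolicy2}) that the lemma itself does not need — the statement is a pathwise, almost-sure identity and holds without any moment assumptions on $Y_i$.

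One concrete soft spot in your write-up: you pass to the limit for the second-derivative term \emph{in expectation}, which only yields convergence in $L^1$ (hence in probability). Since the other terms converge a.s.\ (or in probability for the stochastic integral), you would then need to extract a subsequence along which all convergences hold a.s.\ before you can assert the identity \eqref{eq:ito} almost surely. This step is glossed over. A cleaner fix is to stay pathwise: the occupation-time formula for the semimartingale $Z$,
\begin{displaymath}
  \int_0^t g(Z(s-))\,d[Z^c,Z^c]_s \;=\; \int_{\R} g(a)\,L^a_t\,da ,
\end{displaymath}
holds for each fixed $\omega$, and for fixed $(\omega,t)$ the map $a\mapsto L^a_t(\omega)$ is a.s.\ bounded with compact support. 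Then $\|f_\varepsilon''-f''\|_{L^1(K)}\to 0$ on the relevant compact $K$ gives pathwise convergence of $\int_0^t f_\varepsilon''(Z(s))\,ds$ directly, and no expectations or integrability hypotheses are required. Also note that the local time appearing in the occupation-time formula is that of $Z$ itself (defined via Tanaka for semimartingales with jumps), not of the continuous part $Z^c$; the distinction does not change the conclusion here since $d[Z^c,Z^c]_s=\sigma^2\,ds$, but the statement as you wrote it is slightly imprecise.
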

\begin{remark}
  Although $f''(u)$ is only defined on almost all $u$ in
  $\R$, $\int_0^t f''(Z(s))ds$ is uniquely defined almost
  surely. Indeed,
  \begin{displaymath}
    \sigma^2\int_0^t f''(Z(s))ds = \frac{1}{2} \int_{\R} f''(a) L^a(t)da,
  \end{displaymath}
where $L^a$ is the local time of $Z$ at $a$.
\end{remark}
\begin{proof}
 For any semimartingale $Z$, it follows from Theorem 71 of
 \cite[pp.~221]{Protter05} and the comment of \cite[pp.~70]{Protter05}
 that
\begin{eqnarray}
  f(Z(t)) & = & f(Z(0)) + \int_0^t f'(Z(s-))dZ^c(s) + \frac{1}{2}
  \int_0^t
  f''(Z(s-)) d[Z^c, Z^c](s) \nonumber\\
&& {} + \sum_{0< s\le t} \Delta f(Z(s)),\label{eq:orginalIto}
\end{eqnarray}
where $[Z^c, Z^c]$ is the quadratic variation of $Z^c$. Using
semimartingale representation
(\ref{eq:Zc}), we have
\begin{equation}
  \label{eq:ZcZc}
[Z^c, Z^c](t)=[X, X](t)=\sigma^2 t
\end{equation}
  and
\begin{eqnarray}
  \int_0^t f'(Z(s-))dZ^c(s) & = &
  \int_0^t f'(Z(s-))\mu ds +   \int_0^t f'(Z(s-))\sigma dW(s)
  \nonumber \\ && {}+
  \int_0^t f'(Z(s-))d Y_1^c(s) -   \int_0^t f'(Z(s-))d
  Y_2^c(s) \label{eq:fZmZ}
\end{eqnarray}
 for $t\ge 0$. Because $Y_1$ and $Y_2$ have at most countably many
 jump points, $Z$ has at most countably many discontinuity points.
Therefore, we have
\begin{equation}
  \label{eq:fZmds}
      \int_0^t f'(Z(s-)) ds  =   \int_0^t f'(Z(s))ds
\quad \mbox{and} \quad
  \int_0^t f'(Z(s-)) dW(s) =   \int_0^t f'(Z(s)) dW(s)
\end{equation}
for all $t\ge 0$ almost surely. It\^{o} formula (\ref{eq:ito})
then follows from (\ref{eq:orginalIto})-(\ref{eq:fZmds}).

\end{proof}

\section{Lower Bound}
\label{sec:lowerbound}
In this section, we state and prove a theorem that  establishes a
lower bound for the optimal long-run
average cost. This theorem is closely related to the ``verification
theorem'' in literature. Its proof is self contained, using the It\^o
lemma in Section~\ref{sec:ito-formula-lower}.

\begin{theorem}\label{thm:lowerbound}
  Suppose that $f\in C^1(\R)$ and $f'$ is absolutely continuous such
  that $f''$ is
  locally $L^1$. Suppose that there exists a constant $M>0$ such
  that  $|f'(x)|\leq M $ for all $x\in \R$.
 Assume further that
  \begin{eqnarray}
    && \Gamma f(x)+h(x)\geq \gamma \mbox{ for almost all $x\in
      \mathbb{R}$},\label{eq:lbPoission}\\
    && f(y) - f(x)\le K+ k(x-y) \mbox{ for $y<x$},\label{eq:lbK} \\
    && f(y) - f(x)\le L+\ell(y-x) \mbox{ for $x<y$}.\label{eq:lbL}
  \end{eqnarray}
Then $\AC(x,\varphi)\geq \gamma$ for each feasible policy $\varphi$  and each
initial state $x\in \mathbb{R}$.
\end{theorem}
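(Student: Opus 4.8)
The plan is to apply the It\^o formula of Lemma~\ref{lem Ito} to $f(Z(t))$, use the three hypothesized inequalities to turn the resulting identity into a pathwise lower bound for the cumulative cost $C(t):=\int_0^t h(Z(s))\,ds+KN_1(t)+LN_2(t)+kY_1(t)+\ell Y_2(t)$, and then take expectations and let $t\to\infty$. Lemma~\ref{lem Ito} applies because $f\in C^1(\R)$, $f'$ is absolutely continuous and $f''$ is locally $L^1$; moreover, since the Lebesgue-null set on which \eqref{eq:lbPoission} can fail is not charged by the occupation measure of $Z$ (the local-time representation recorded in the Remark after Lemma~\ref{lem Ito}), inequality \eqref{eq:lbPoission} gives $\int_0^t\Gamma f(Z(s))\,ds\ge\gamma t-\int_0^t h(Z(s))\,ds$ a.s. Substituting this into \eqref{eq:ito} and rearranging, the theorem reduces to the pathwise inequality
\[
KN_1(t)+LN_2(t)+kY_1(t)+\ell Y_2(t)+\int_0^t f'(Z(s-))\,dY_1^c(s)-\int_0^t f'(Z(s-))\,dY_2^c(s)+\sum_{0\le s\le t}\Delta f(Z(s))\ge 0,
\]
where the jump sum is taken over $[0,t]$ so as to absorb the discrepancy $f(x)-f(Z(0))=-\Delta f(Z(0))$ coming from the adjustment possibly made at time $0$ (recall $Z(0-)=x$).

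Next I would bound the three control terms. At an adjustment time $s$, applying \eqref{eq:lbK} with $(y,x)=(Z(s-),Z(s))$ when $\Delta Y_1(s)>0$, applying \eqref{eq:lbL} with $(x,y)=(Z(s),Z(s-))$ when $\Delta Y_2(s)>0$, and composing the two when $Y_1$ and $Y_2$ jump simultaneously, gives $\Delta f(Z(s))\ge-\big(K\mathbf{1}_{\{\Delta Y_1(s)>0\}}+k\,\Delta Y_1(s)\big)-\big(L\mathbf{1}_{\{\Delta Y_2(s)>0\}}+\ell\,\Delta Y_2(s)\big)$. Summing over $0\le s\le t$ and using $N_i(t)\ge\#\{s\le t:\Delta Y_i(s)>0\}$ and $\sum_{s\le t}\Delta Y_i(s)\le Y_i(t)$ shows the jump sum is dominated by the fixed costs and by the jump parts of the proportional costs. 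For the continuous parts, a feasible policy has $Y_1^c\equiv0$ whenever $K>0$ and $Y_2^c\equiv0$ whenever $L>0$; in the remaining cases $K=0$ (resp.\ $L=0$), letting the arguments in \eqref{eq:lbK} (resp.\ \eqref{eq:lbL}) coalesce yields $f'(\cdot)\ge-k$ (resp.\ $f'(\cdot)\le\ell$) everywhere, so $\int_0^t f'(Z(s-))\,dY_1^c(s)\ge-kY_1^c(t)$ and $-\int_0^t f'(Z(s-))\,dY_2^c(s)\ge-\ell Y_2^c(t)$, which are absorbed by the continuous parts $kY_1^c(t)$ and $\ell Y_2^c(t)$ of the proportional costs. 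Assembling the pieces proves the displayed inequality, hence $C(t)\ge\gamma t+f(x)-f(Z(t))+\sigma\int_0^t f'(Z(s))\,dW(s)$ for every $t$, a.s.

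Finally I would take expectations. Since $|f'|\le M$, the integrand of the It\^o integral is bounded, so that integral is a martingale with zero mean; and $|f(Z(t))|\le|f(0)|+M|Z(t)|$ together with $\mathbb{E}_x|Z(t)|<\infty$ from \eqref{eq:regularPolicy1} makes $\mathbb{E}_x[f(Z(t))]$ finite. Hence $\mathbb{E}_x[C(t)]\ge\gamma t+f(x)-\mathbb{E}_x[f(Z(t))]$, i.e.\ $\frac1t\mathbb{E}_x[C(t)]\ge\gamma+\frac{f(x)}{t}-\frac1t\mathbb{E}_x[f(Z(t))]$. It then suffices to show $\liminf_{t\to\infty}\frac1t\mathbb{E}_x[f(Z(t))]\le0$. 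If $\AC(x,\varphi)=\infty$ there is nothing to prove; otherwise $\limsup_t\frac1t\mathbb{E}_x[\int_0^t h(Z(s))\,ds]\le\AC(x,\varphi)<\infty$, and since Assumption~\ref{assumption:h}(a),(c) forces a linear lower bound $h(u)\ge c_1|u|-c_2$ with $c_1>0$, $c_2\ge0$, the quantity $\frac1t\int_0^t\mathbb{E}_x|Z(s)|\,ds$ stays bounded as $t\to\infty$; this rules out $\mathbb{E}_x|Z(t)|\ge\delta t$ holding eventually for any $\delta>0$, so $\liminf_t\frac1t\mathbb{E}_x|Z(t)|=0$ and therefore $\liminf_t\frac1t\mathbb{E}_x[f(Z(t))]\le M\liminf_t\frac1t\mathbb{E}_x|Z(t)|=0$. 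Evaluating $\frac1t\mathbb{E}_x[C(t)]$ along a sequence $t_n\uparrow\infty$ realizing this $\liminf$ gives $\AC(x,\varphi)\ge\gamma$.

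The routine parts are the It\^o expansion and the jump/continuous bookkeeping above, the only fussy points being simultaneous upward/downward jumps, the adjustment at $t=0$, and the fact that the occupation measure of $Z$ ignores Lebesgue-null sets. The real obstacle is the boundary term $\frac1t\mathbb{E}_x[f(Z(t))]$ in the last step: because $f$ is merely Lipschitz, not bounded, this term does not automatically vanish, and one must exploit the linear growth of $h$ from Assumption~\ref{assumption:h} — after first reducing to the case $\AC(x,\varphi)<\infty$ — to show it is asymptotically nonpositive along a subsequence.
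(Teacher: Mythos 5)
Your proof is correct and follows essentially the same route as the paper's: apply It\^o's formula, use \eqref{eq:lbPoission}--\eqref{eq:lbL} to bound the drift, jump, and continuous control terms, take expectations, and handle the boundary term $\tfrac1t\E_x[f(Z(t))]$ via the linear growth of $h$ forced by Assumption~\ref{assumption:h}. The only differences are organizational: you do the jump/continuous bookkeeping once in a unified way rather than splitting into the paper's four cases by the signs of $K$ and $L$, and in the final step you argue the contrapositive --- assuming $\AC(x,\varphi)<\infty$ to conclude $\liminf_t\tfrac1t\E_x|Z(t)|=0$, hence that the boundary term vanishes along a subsequence --- whereas the paper splits on whether $\liminf_t\tfrac1t\E_x[(f(Z(t)))^+]$ is zero or positive and, in the latter case, shows the holding cost alone forces $\AC=\infty$; these are the same estimate read in opposite directions.
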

\begin{remark}
(i) When $K=0$, condition (\ref{eq:lbK}) is equivalent to  that $f'(x) \ge
-k$ for each $x\in \R$. When $L=0$, condition (\ref{eq:lbL}) is
equivalent to that $f'(x)\le \ell$ for each $x\in\R$.
(ii)
Because under an arbitrary control policy, the
inventory level $Z$ can potentially reach any level. Thus, we require
function $f$ to be defined on the entire real line $\R$. It is
\emph{not} enough to have $f$ defined on a certain interval $[d, u]$.
\end{remark}
\begin{proof}
Let $\varphi=(Y_1, Y_2)$ be a feasible
policy.  We choose a version of $f''(x)$ such that (\ref{eq:lbPoission})
holds for every $x\in\R$.
By It\^{o}'s
formula (\ref{eq:ito}),
\begin{eqnarray}
  f(Z(t)) &=&  f(Z(0-))+ \int_0^t \Gamma f(Z(s))ds + \sigma \int_0^t
  f'(Z(s))d W(s) + \int_0^t f'(Z(s-))dY_1^c(s) \nonumber \\ && {}- \int_0^t
  f'(Z(s-))dY_2^c(s) + \sum_{0\le s \le t} \Delta f(Z(s)) \nonumber\\
 &\ge &  f(Z(0-))+ \gamma t- \int_0^t h (Z(s))ds + \sigma \int_0^t
  f'(Z(s))d W(s) + \int_0^t f'(Z(s-))dY_1^c(s) \nonumber \\ && {}- \int_0^t
  f'(Z(s-))dY_2^c(s) + \sum_{0\le s \le t} \Delta f(Z(s)) \label{eq:itoinequality}
\end{eqnarray}
where the inequality is due to (\ref{eq:lbPoission}). In the rest of
the proof, we separate into different cases depending on the positivity
of $K$ and $L$. We will provide a complete proof for the case when
$K>0$ and $L>0$. Sketches will be provided for proofs in  other cases.

\textbf{Case I: Assume that $K>0$ and $L>0$}.  In this case,
it is sufficient to restrict feasible policies  to impulse
control policies $\{(T_n, \xi_n): n=0, 1, \ldots \}$. In this case, $Y_1^c=0$
and $Y_2^c=0$. Conditions (\ref{eq:lbK}) and (\ref{eq:lbL}) imply
that
 and $\Delta f (Z(T(n)))\ge - \phi(\xi_n)$  for $n=0, 1, \ldots$, where
\begin{displaymath}
\phi(\xi)= \left \{
\begin{array}{ll}
   K+k\xi, & \mbox{if }\xi>0, \\
   0, & \mbox{if }\xi=0, \\
   L-l\xi, & \mbox{if }\xi<0.
  \end{array}
\right.
\end{displaymath}
Therefore, (\ref{eq:itoinequality}) leads to
\begin{equation}
  \label{eq:itoinequality2}
  f(Z(t)) \ge f(Z(0-)) + \gamma t -\int_0^t h(Z(s))ds + \sigma \int_0^t
  f'(Z(s)) dW(s) - \sum_{n=0}^{N(t)} \phi(\xi_n)
\end{equation}
for each $t\ge 0$.
Fix an $x\in \R$. We assume that
\begin{displaymath}
  \E_x\biggl ( \int_0^t h (Z(s))ds + \sum_{n=0}^{N(t)}\phi(\xi_n) \biggr )< \infty
\end{displaymath}
for each $t>0$. Otherwise, $\AC(x, \varphi )=\infty$ and thus $\AC(x,
\varphi)\ge \gamma$  is trivially satisfied. Because $\abs{f'(x)}\le
M$, $\E_x\abs{\int_0^t f'(Z(s))dW(t)}<\infty$ and
$\E_x{\int_0^tf'(Z(s))dW(s)}=0$. Meanwhile
\begin{displaymath}
f(Z(t)) \le \bigl
(f(Z(t)) \bigr )^+
\end{displaymath}
and $\E_x \bigl[\bigl(f(Z(t)) \bigr )^+\bigl]$ is well defined, though
it can be $\infty$,  where, for a $b\in \R$, $b^+=\max(b, 0)$.
Taking $\E_x$ on the both
 sides of (\ref{eq:itoinequality2}), we have
 \begin{displaymath}
  \E_x\bigl[\bigl(f(Z(t)) \bigr )^+\bigl]\ge \E_x\bigl[f(Z(0-)) \bigl] + \gamma t -
  \E_x\biggl ( \int_0^t h (Z(s))ds + \sum_{n=0}^{N(t)}\phi(\xi_n) \biggr ).
 \end{displaymath}
Dividing both sides by $t$ and taking limit as $t\to\infty$, one has
\begin{equation}
  \label{eq:interInequality}
\liminf_{t\to\infty} \frac{1}{t}  \left[ \E_x\biggl ( \int_0^th (Z(s))ds +
\sum_{n=0}^{N(t)}\phi(\xi_n) \biggr )  +  \E_x\bigl[\bigl(f(Z(t)) \bigr )^+\bigl] \right] \ge \gamma.
\end{equation}
We consider two cases. In the first case when
\begin{displaymath}
  \liminf_{t\to\infty} \frac{1}{t} \E_x\bigl[\bigl(f(Z(t)) \bigr )^+\bigl]= 0,
\end{displaymath}
it is clear that (\ref{eq:interInequality}) implies the theorem.
Now we consider the case when
\begin{displaymath}
  \liminf_{t\to\infty} \frac{1}{t} \E_x\bigl[\bigl(f(Z(t)) \bigr )^+\bigl]=b> 0.
\end{displaymath}
It follows that for sufficiently large $t$,
\begin{eqnarray}
\label{eq:E(f(Z))>}
   \E_x\bigl[\bigl(f(Z(t)) \bigr )^+\bigl]\ge (b/2) t.
\end{eqnarray}
Because $\abs{f'(y)}\le M$, for all $y\in \R$,
\begin{displaymath}
(f(y_1))^+ -(f(y_2))^+ \le  \abs{f(y_1)-f(y_2)}\le M \abs{y_1-y_2} \le M
(\abs{y_1}+\abs{y_2}).
\end{displaymath}
Therefore,
\begin{displaymath}
  \abs{Z(t)} \ge \frac{1}{M}\bigr(f(Z(t))^+ - (f(Z(0)))^+\bigr)-|Z(0)|,
\end{displaymath}
which, together with (\ref{eq:E(f(Z))>}), implies that
\begin{eqnarray*}
  \E_x\abs{Z(t)} &\ge&
  \frac{1}{M}\bigr(\E_x[(f(Z(t)))^+] - \E_x[(f(Z(0)))^+]\bigr)-\E_x\abs{Z(0)}\\
&\ge& \frac{1}{M}\bigl((b/2)t - \E_x[(f(Z(0)))^+]\bigr)-\E_x\abs{Z(0)},
\end{eqnarray*}
for sufficiently large $t$. This implies that
\begin{equation}
  \label{eq:Zinfinite}
  \liminf_{t\to\infty} \frac{1}{t} \int _0^t \E_x\abs{Z(s)} ds
  =\infty.
\end{equation}
Now we prove that
\begin{equation}
  \label{eq:inventoryInfinite}
    \liminf_{t\to\infty} \frac{1}{t} \int _0^t \E_x\bigl[h(Z(s))\bigr] ds =\infty,
\end{equation}
which implies that $\AC(x, \varphi)=\infty$, thus proving the theorem.

To see \eqref{eq:inventoryInfinite}, by the Assumption (a) and (c), there
exist  constants $h_1>0$ and $c>0$ such
that
\begin{equation}
  \label{eq:hDerivatie}
  h'(y) \ge h_1 \text{ for all }  y \ge c \quad \text{and} \quad
  h'(y) \le -h_1 \text{ for all }  y \le -c.
\end{equation}
Because  of \eqref{eq:Zinfinite}, one of the following two equations
holds:
\begin{equation}
  \label{eq:Zinfiniteplus}
  \liminf_{t\to\infty} \frac{1}{t} \E_x  \biggl (\int _0^t
  Z(s)1_{\{Z(s)\ge c\}} ds  \biggr )
  =\infty,
\end{equation}
\begin{equation}
  \label{eq:Zinfiniteminus}
  \liminf_{t\to\infty} \frac{1}{t} \E_x  \biggl ( \int _0^t
  |Z(s)|1_{\{Z(s)\le -c\}} ds  \biggr )
  =\infty.
\end{equation}
Assume that \eqref{eq:Zinfiniteminus} holds. Condition
\eqref{eq:hDerivatie} implies that
\begin{displaymath}
 h(-c)-h(y) \le (-h_1)(-c-y)   \quad \text{ for } y \le -c
\end{displaymath}
or
\begin{displaymath}
  h(y) \ge h_1 \abs{y} +h(-c) - ch_1 \text{ for } y \le -c.
\end{displaymath}
Therefore,
\begin{displaymath}
  h(y)1_{\{y\le -c\}} \ge h_1|y|1_{\{ y\le -c\}} -ch_1.
\end{displaymath}
It follows that
\begin{eqnarray*}
\liminf_{t\to\infty} \frac{1}{t} \E_x \biggl ( \int _0^t h(Z(s)) ds
\biggr ) &\ge &
\liminf_{t\to\infty} \frac{1}{t} \E_x  \biggl (\int _0^t h(Z(s))1_{\{Z(s)\le
  -c\}} dt  \biggr ) \\
&\ge & \liminf_{t\to\infty} \frac{1}{t}  \biggl ( \E_x\int _0^t
h_1|Z(s)|1_{\{Z(s)\le -c\}} ds  \biggr ) \\
&  = &\infty,
\end{eqnarray*}
which proves \eqref{eq:inventoryInfinite}.
Hence the theorem is proved for $K>0$ and $L>0$.

\textbf{Case II: Assume that $K=0$ and $L=0$.}
 Condition (\ref{eq:lbK}) leads to
$f'(u)\ge -k$ for all $u\in \R$ and condition (\ref{eq:lbL}) leads to
$f'(u)\le \ell$ for all $u\in \R$.
Because $f$ is continuous, $\Delta f(Z(s))\neq 0$ implies that $\Delta
Z(s)\neq 0$. If $\Delta Z(s)>0$, (\ref{eq:lbK}) implies that
\begin{displaymath}
  \Delta f(Z(s))  \ge -k \Delta Z(s).
\end{displaymath}
If $\Delta Z(s)<0$, (\ref{eq:lbL}) implies that
\begin{displaymath}
  \Delta f(Z(s)) \ge -\ell \Delta Z(s).
\end{displaymath}
Thus, the last three terms
in (\ref{eq:itoinequality}) is at least
\begin{eqnarray*}
\lefteqn{  -k Y_1^c(t) - \ell Y_2^c(t) + \sum_{0\le s \le t} \Delta
  f(Z(s))} \\
&& {} \ge -k Y_1^c(t) - \ell Y_2^c(t) -k \sum_{0\le s \le t\atop
  \Delta Z(s)>0} \Delta Z(s)
 -\ell \sum_{0\le s \le t\atop
  \Delta Z(s)<0} \Delta Z(s) \\
&& {}  = -k Y_1^c(t) - \ell Y_2^c(t) -k \sum_{0\le s \le t} \Delta Y_1(s)
 -\ell \sum_{0\le s \le t} \Delta Y_2(s) \\
&& {}\ge -k Y_1(t) - \ell Y_2(t).
\end{eqnarray*}
Therefore, (\ref{eq:itoinequality}) leads to
\begin{equation*}
  \label{eq:itoinequality3}
  f(Z(t)) \ge f(Z(0-)) + \gamma t - \int_0^th(Z(s))ds + \sigma \int_0^t
  f'(Z(s)) d W(s) -k Y_1(t) -\ell Y_2(t)
\end{equation*}
for $t\ge 0$.
The rest of the proof  is identical to the case when
$K>0$ and $L>0$.

\textbf{Case III: Assume $K>0$ and $L=0$.} Consider a feasible policy
$(Y_1, Y_2)$ with a finite cost. The upward
controls must be impulse controls and $Y_1(t)=\sum_{n=0}^{N_1(t)}
\xi_1(n)$. Condition (\ref{eq:lbK})
implies that
\begin{displaymath}
  \sum_{0\le s \le t \atop \Delta Z(s)>0} \Delta f(Z(s)) \ge
  -\sum_{n=0}^{N_1(t)} (K+ k \xi_1(n)).
\end{displaymath}
and condition (\ref{eq:lbL}) implies that
\begin{eqnarray*}
 - \ell Y_2^c(t) + \sum_{0\le s \le t \atop  \Delta Z(s)<0} \Delta f(Z(s))
\ge  - \ell Y_2(t).
\end{eqnarray*}
Therefore, (\ref{eq:itoinequality}) leads to
\begin{equation*}
  \label{eq:itoinequality4}
  f(Z(t)) \ge f(Z(0-)) + \gamma t - \int_0^th(Z(s))ds + \sigma \int_0^t
  f'(Z(s)) d W(s)   -\sum_{n=0}^{N_1(t)} (K+ k \xi_1(n))  -\ell Y_2(t)
\end{equation*}
for $t\ge 0$.
The rest of the proof  is identical to the case when
$K>0$ and $L>0$.

\textbf{Case IV: Assume that $K=0$ and $L>0$}. This case is analogous
to the case when $K>0$ and $L=0$. Thus, the proof is omitted.
\end{proof}

\section{Impulse Controls}
\label{sec:impulse-controls}
In this section, we assume that  $K>0$ and $L>0$. Therefore, we
restrict our feasible policies to impulse controls as in
(\ref{eq:Y1N1xi1}) and (\ref{eq:Y2N2xi2}).
An impulse control band policy is defined by four
parameters $d$, $D$, $U$, $u$, where $d<D< U<u$. Under the policy,
when the inventory
falls to $d$, the system instantaneously orders items to bring it to
level $D$; when the inventory rises to $u$, the system adjusts its
inventory to bring it down to $U$. Given a control band policy
$\varphi$, in Section~\ref{sec:controlBand} we provide a method for
performance evaluation. As a byproduct, we also obtain the relative
value function associated with the control band policy.
Then in Section~\ref{sec:optimal} we show that an optimal policy is a
control band policy and present equations that uniquely determine
the optimal control band parameters $(d^*, D^*, U^*, u^*)$.

\subsection{Control Band Policies}
\label{sec:controlBand}
We use  $\{d, D, U, u\}$ to denote the
control band policy associated with parameters $d$, $D$, $U$, $u$.
Let us fix a control band policy $\varphi=\{d, D, U, u\}$
and an initial inventory level $Z(0-)=x$.
The adjustment amount $\xi_n$ of
the control band policy is given by
\begin{displaymath}
 \xi_0=
   \begin{cases}
   D-x, & \mbox{if } x\leq d, \\
   0, & \mbox{if } d<x<u, \\
   U-x, & \mbox{if }x\geq u,
   \end{cases}
\end{displaymath}
and for $n=1,2,...$,
\begin{displaymath}
  \xi_n=
  \begin{cases}
   D-d, & \mbox{if }Z(T_n-)=d, \\
   U-u, & \mbox{if }Z(T_n-)=u,
  \end{cases}
\end{displaymath}
where again $Z({t-})$ denotes the left limit at time $t$, $T_0=0$ and
\begin{displaymath}
  T_n = \inf\bigl\{ t> T_{n-1}: Z(t)\in\{d, u\}\bigr\}
\end{displaymath}
is the $n$th adjustment time.
 (By convention, we
assume $Z$ is right continuous having left limits.)
Our first task is to find its long-run average cost. We first present the following theorem.

\begin{theorem}
\label{thm:control band}
Assume that a control band policy $\varphi=\{d, D, U, u\}$ is fixed. If
there
exist a constant $\gamma$ and  a twice continuously differentiable
function
$V:[d,u]\rightarrow \R$ that satisfies
\begin{equation}
\Gamma V(x)+h(x)=\gamma, \quad d\leq x\leq u,\label{Poisson equ}
\end{equation}
with boundary conditions
\begin{eqnarray}
 && V(d)-V(D)=K+k(D-d),\label{V(d)-V(D)}\\
 && V(u)-V(U)=L+l(u-U),\label{V(u)-V(U)}
\end{eqnarray}
then the average cost $AC(x,\varphi)$ is independent of the starting
point $x\in \R$ and is given by $\gamma$ in \eqref{Poisson equ}.
\end{theorem}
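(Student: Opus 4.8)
The plan is to verify that the control band policy $\varphi = \{d,D,U,u\}$ is well-behaved enough that the inventory process $Z$ is a positive-recurrent regenerative process, and then to apply the It\^o formula of Lemma~\ref{lem Ito} to the given function $V$, extended suitably to all of $\R$, in order to identify $\gamma$ as the long-run average cost. First I would observe that under $\varphi$, after the (possibly nonzero) initial adjustment $\xi_0$, the process $Z$ lives in the interval $[d,u]$, hits the boundary points $\{d,u\}$ at the successive stopping times $T_n$, and is reset to $D$ or $U$. Since $Z$ between adjustments is just a $(\mu,\sigma^2)$-Brownian motion restricted to a bounded interval, the interarrival times $T_n - T_{n-1}$ have finite (indeed exponentially bounded) moments, so $\E_x[N_i(t)] < \infty$ and $\E_x[Y_i(t)] < \infty$ for each $t$; thus $\varphi$ is a feasible policy in the sense of \eqref{eq:regularPolicy1}--\eqref{eq:regularPolicy2}, and moreover $\{T_n\}$ forms a sequence of regeneration-like epochs (more precisely, the epochs at which $Z$ is reset to $D$ after hitting $d$ form genuine regeneration times).

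Next I would apply It\^o's formula \eqref{eq:ito} to $V$ over $[0,t]$. On the continuity set of $Z$ inside $[d,u]$ we have $\Gamma V(Z(s)) = \gamma - h(Z(s))$ by the Poisson equation \eqref{Poisson equ}; the continuous parts $Y_1^c, Y_2^c$ vanish because $\varphi$ is of impulse type; and at each adjustment time $T_n$ the jump $\Delta V(Z(T_n))$ equals $V(D) - V(d) = -(K + k(D-d))$ or $V(U) - V(u) = -(L + \ell(u-U))$ by the boundary conditions \eqref{V(d)-V(D)}--\eqref{V(u)-V(U)}. The initial adjustment $\xi_0$ contributes a bounded term $V(Z(0)) - V(x)$ together with its cost, which is $O(1)$. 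Collecting terms and taking $\E_x$ (the It\^o integral $\sigma\int_0^t V'(Z(s))\,dW(s)$ is a martingale with zero mean since $V'$ is bounded on the compact range of $Z$ after time $0$), we obtain
\begin{displaymath}
  \E_x[V(Z(t))] = V(Z(0)) + \gamma t - \E_x\!\left[\int_0^t h(Z(s))\,ds + K N_1(t) + L N_2(t) + k Y_1(t) + \ell Y_2(t)\right] + O(1).
\end{displaymath}
Since $V$ is bounded on $[d,u]$ and $Z(t) \in [d,u]$ for $t$ past the first adjustment, $\E_x[V(Z(t))]$ is bounded uniformly in $t$; dividing by $t$ and letting $t \to \infty$ kills both $\E_x[V(Z(t))]/t$ and the $O(1)$ term, yielding $\AC(x,\varphi) = \gamma$, with the $\limsup$ in \eqref{eq:AC-1} actually being a genuine limit and independent of $x$.

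The main obstacle I anticipate is the careful justification of the limiting and integrability steps: showing that $\E_x[\int_0^t h(Z(s))\,ds]$, $\E_x[N_i(t)]$ and $\E_x[Y_i(t)]$ are all finite and grow at most linearly in $t$ (so that the per-unit-time quantities have finite limits), and that the martingale term truly has zero expectation. The linear-growth bound follows from a renewal argument on the regeneration epochs $\{T_n\}$ together with the fact that $h$ is continuous hence bounded on the compact interval $[d,u]$; the exponential moment bound on exit times of Brownian motion from a bounded interval gives finiteness of $\E_x[N_i(t)]$. One technical point deserving care is the treatment of the possibly large initial jump $\xi_0$ when $x \notin (d,u)$, but since this occurs only at $t = 0$ it contributes a one-time finite cost $\phi(\xi_0)$ and a bounded boundary term, both negligible after division by $t$. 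Once these estimates are in place the identification $\AC(x,\varphi) = \gamma$ is immediate.
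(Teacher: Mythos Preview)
Your proposal is correct and follows essentially the same route as the paper: apply the It\^o formula to $V$, use the Poisson equation \eqref{Poisson equ} to replace $\Gamma V$ by $\gamma - h$, use the boundary conditions \eqref{V(d)-V(D)}--\eqref{V(u)-V(U)} to convert each jump $\Delta V(Z(T_n))$ into $-\phi(\xi_n)$, take expectations, divide by $t$, and use boundedness of $V$ on $[d,u]$ to kill the boundary terms. The paper's argument is terser: it does not invoke regeneration or renewal theory at all, since once $Z(0)\in[d,u]$ the process never leaves $[d,u]$ and boundedness of $V$ and $V'$ on that compact interval is immediate, which is all that is needed; your discussion of regeneration epochs and exponential moments for exit times is correct but more than necessary here.
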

\noindent
\begin{remark}
Equation (\ref{Poisson equ}) is known as the Poisson equation.
The solution $V$ is known as a {\em relative value function} associated with the control band policy $\varphi$.
It is unique
up to a constant. One can evaluate $\gamma$ from \eqref{Poisson equ}
by taking $x$ to be any value in $[d, u]$.
\end{remark}
\begin{proof}
  Consider the control band policy $\varphi=\{d, D, U, u\}$. Let $V$
  be a twice continuously differentiable function on $[d, u]$ that
  satisfies (\ref{Poisson equ})-(\ref{V(u)-V(U)}).
Because $d\le Z(t)\le u$, by Lemma~\ref{lem    Ito},  we have
\begin{displaymath}
\mathbb{E}_x[V(Z(t))]=\mathbb{E}_x[V(Z(0))]+\mathbb{E}_x\Big[\int_0^t \Gamma
V(Z(s))ds\Big] +\mathbb{E}_x\Big[\sum_{n=1}^{N(t)}\theta_n \Big],
\end{displaymath}
where $\theta_n=V(Z(T_n))-V(Z(T_n-))$. Boundary conditions
(\ref{V(d)-V(D)}) and
(\ref{V(u)-V(U)}) imply that
$\theta_n=V(Z(T_n))-V(Z(T_n-))=-\phi(\xi_n)$ for $n=1, 2, \ldots$. Therefore,
\begin{eqnarray*}
\mathbb{E}_x[V(Z(t))]-\mathbb{E}_x[V(Z(0))]&=&\mathbb{E}_x\Big[\int_0^t \Gamma
V(Z(s))ds\Big] +\mathbb{E}_x\Big[\sum_{n=1}^{N(t)}\theta_n
\Big]\nonumber\\&=&\gamma t-\mathbb{E}_x\Big[\int_0^t h(Z(s))ds\Big]
-\mathbb{E}_x\Big[\sum_{n=1}^{N(t)}\phi(\xi_n) \Big]. \nonumber
\end{eqnarray*}
Dividing both sides by $t$ and letting $t\rightarrow\infty$, we have
$AC(x,\varphi)=\gamma$ because
\begin{displaymath}
\lim_{t\rightarrow\infty}\frac{1}{t}\mathbb{E}_x[V(Z(t))]=0\quad
\text{and}\quad
\mathbb{E}_x[V(Z(0))]=V(x+\xi_0).
\end{displaymath}
\end{proof}

We end this section by explicitly finding a solution $(V, \gamma)$ to
(\ref{Poisson equ})-(\ref{V(u)-V(U)}). The solution $V$ is unique up
to a constant. In the following proposition,
 let
\begin{eqnarray}\label{eq:lambda}
\lambda=2\mu/\sigma^2.
\end{eqnarray}
\begin{proposition}
\label{prop:controlband}
 Let $\varphi=\{d,D,U,u\}$ be a control band policy with
  \begin{displaymath}
    d<D<U<u.
  \end{displaymath}
Let  $m \in \R$ be any fixed number. Define
\begin{displaymath}
  V(x)=  \int_{m}^x g(y) dy
\end{displaymath}
with
\begin{equation}
  \label{eq:g}
  g(x)= V'(m) e^{\lambda (m-x)}  + \gamma \frac{2}{\sigma^2} \int_m^x
e^{\lambda (y-x)}dy -\frac{2}{\sigma^2}\int_m^x h(y)e^{\lambda (y-x)}dy,
\end{equation}
where
\begin{eqnarray}
 &&  \gamma =  \frac{a_1\bigl(c_2+L+\ell(u-U)\bigr)+a_2\bigl(c_1 +
    K+k(D-d)\bigr)}{a_2b_1 +
    a_1b_2}, \label{eq:gamma} \\
 && V'(m) =  \frac{b_1\bigl(c_2+L+\ell(u-U)\bigr)-b_2\bigl(c_1 +
    K+k(D-d)\bigr)}{a_2b_1 +
    a_1b_2}. \label{eq:Vprime}
\end{eqnarray}
Then $(V, \gamma)$ is a solution to
(\ref{Poisson    equ})-(\ref{V(u)-V(U)}). In (\ref{eq:gamma}) and
(\ref{eq:Vprime}), we set
\begin{eqnarray}
&&  a_1 =  \int_d^D e^{\lambda(m-x)}dx,  \quad
  a_2 =  \int_U^u e^{\lambda(m-x)}dx, \label{eq:coeffa}
\\
&& b_1=-\frac{2}{\sigma^2}
\int_d^D \int_m^x
e^{\lambda (y-x)}dy dx, \quad
 b_2=\frac{2}{\sigma^2}
\int_U^u \int_m^x
e^{\lambda (y-x)}dy dx, \label{eq:coeffb}
\\
&& c_1=-\frac{2}{\sigma^2}\int_d^D\int_m^x h(y)e^{\lambda
  (y-x)}dydx, \quad
c_2=\frac{2}{\sigma^2}\int_U^u\int_m^x h(y)e^{\lambda
  (y-x)}dydx.\label{eq:coeffc}
\end{eqnarray}
\end{proposition}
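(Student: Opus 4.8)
The plan is a direct verification, carried out in three steps: (i) show that every $V$ of the displayed form solves the Poisson equation \eqref{Poisson equ} on $[d,u]$ for \emph{arbitrary} constants $\gamma$ and $V'(m)$; (ii) translate the two boundary conditions \eqref{V(d)-V(D)}--\eqref{V(u)-V(U)} into a $2\times2$ linear system in $(\gamma,V'(m))$; (iii) solve that system, recovering \eqref{eq:gamma}--\eqref{eq:Vprime}, after checking that its determinant $a_1b_2+a_2b_1$ is nonzero.

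Step (i) is routine. With $g=V'$, equation \eqref{Poisson equ} reads $\tfrac12\sigma^2 g'(x)+\mu g(x)=\gamma-h(x)$, i.e.\ $g'(x)+\lambda g(x)=\tfrac2{\sigma^2}(\gamma-h(x))$ with $\lambda$ as in \eqref{eq:lambda}. Multiplying by the integrating factor $e^{\lambda x}$ and integrating from $m$ to $x$ produces exactly \eqref{eq:g}, the constant of integration being $g(m)=V'(m)$. Conversely, \eqref{eq:g} says $e^{\lambda x}g(x)=e^{\lambda m}V'(m)+\tfrac2{\sigma^2}\int_m^x e^{\lambda y}(\gamma-h(y))\,dy$, whose integrand is continuous since $h$ is (Assumption~\ref{assumption:h}); hence $g\in C^1$, and therefore $V=\int_m^{\cdot}g\in C^2$ as Theorem~\ref{thm:control band} requires, and differentiating the displayed identity returns $\tfrac12\sigma^2 g'+\mu g=\gamma-h$, that is $\Gamma V+h=\gamma$ on $[d,u]$, for any choice of $\gamma$ and $V'(m)$.

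For step (ii), since $V(x)=\int_m^x g$ we have $V(d)-V(D)=-\int_d^D g(x)\,dx$ and $V(u)-V(U)=\int_U^u g(x)\,dx$. Substituting \eqref{eq:g} into these two integrals and term-by-term integration, matched against the definitions \eqref{eq:coeffa}--\eqref{eq:coeffc}, gives $\int_d^D g=a_1V'(m)-b_1\gamma+c_1$ and $\int_U^u g=a_2V'(m)+b_2\gamma-c_2$, so \eqref{V(d)-V(D)}--\eqref{V(u)-V(U)} become
\[
a_1V'(m)-b_1\gamma=-\bigl(c_1+K+k(D-d)\bigr),\qquad a_2V'(m)+b_2\gamma=c_2+L+\ell(u-U).
\]
Cramer's rule on this system has determinant $a_1b_2+a_2b_1$ and numerators matching precisely those in \eqref{eq:gamma}--\eqref{eq:Vprime}. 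So everything reduces to showing $a_1b_2+a_2b_1\ne0$, which I expect to be the only step requiring a genuine (if small) idea.

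For step (iii), put $\psi(x)=\int_m^x e^{\lambda(y-x)}\,dy$, so that $\psi'(x)=e^{\lambda(m-x)}>0$ (hence $\psi$ is strictly increasing), $a_1=\int_d^D\psi'=\psi(D)-\psi(d)>0$, $a_2=\psi(u)-\psi(U)>0$, and $b_1=-\tfrac2{\sigma^2}\int_d^D\psi$, $b_2=\tfrac2{\sigma^2}\int_U^u\psi$; thus $a_1b_2+a_2b_1=\tfrac2{\sigma^2}a_1a_2\bigl(\tfrac1{a_2}\int_U^u\psi-\tfrac1{a_1}\int_d^D\psi\bigr)$. By the Cauchy mean value theorem applied to $\Psi=\int_m^{\cdot}\psi$ and $\psi$ on $[d,D]$ and on $[U,u]$, one has $\tfrac1{a_1}\int_d^D\psi=\rho(\zeta_1)$ and $\tfrac1{a_2}\int_U^u\psi=\rho(\zeta_2)$ for some $\zeta_1\in(d,D)$, $\zeta_2\in(U,u)$, where $\rho:=\psi/\psi'$; a one-line computation gives $\rho(x)=\lambda^{-1}(e^{\lambda(x-m)}-1)$ (and $\rho(x)=x-m$ when $\lambda=0$), so $\rho'(x)=e^{\lambda(x-m)}>0$ and $\rho$ is strictly increasing. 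Since $d<D<U<u$ forces $\zeta_1<\zeta_2$, we get $\rho(\zeta_1)<\rho(\zeta_2)$ and hence $a_1b_2+a_2b_1>0$. Therefore \eqref{eq:gamma}--\eqref{eq:Vprime} are well defined and, with these values, $V$ satisfies \eqref{Poisson equ}--\eqref{V(u)-V(U)}; combined with Theorem~\ref{thm:control band} this also gives $\AC(x,\varphi)=\gamma$, which completes the argument. The main obstacle is thus only the non-degeneracy of the $2\times2$ system, and once it is recast through the monotone function $\rho$ together with the ordering of the band it is immediate.
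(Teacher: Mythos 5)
Your proof follows the same integrating-factor and $2\times 2$ linear-system route as the paper: steps (i)--(ii) reproduce the paper's derivation exactly. Your step (iii), showing $a_1b_2+a_2b_1>0$ via the Cauchy mean value theorem applied to $\Psi$ and $\psi$ together with the strict monotonicity of $\rho=\psi/\psi'$ and the ordering $d<D<U<u$, rigorously fills a point the paper leaves implicit --- the paper simply asserts that the linear system has a unique solution without checking that the determinant is nonzero --- and this addition is correct and worthwhile.
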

\begin{proof}
Equation (\ref{Poisson equ})
is  equivalent to
\begin{displaymath}
  (e^{\lambda x}V'(x))' = \frac{2}{\sigma^2}(\gamma-h(x))e^{\lambda x}.
\end{displaymath}
Integrating over $[m, x]$ on both sides, we have
\begin{eqnarray*}
e^{\lambda x}V'(x)&=&e^{\lambda m}  V'(m) + \gamma \frac{2}{\sigma^2} \int_m^x
e^{\lambda y}dy -\frac{2}{\sigma^2}\int_m^x h(y)e^{\lambda y}dy
\end{eqnarray*}
or equivalently
\begin{displaymath}
  V'(x)=
e^{\lambda (m-x)}  V'(m) + \gamma \frac{2}{\sigma^2} \int_m^x
e^{\lambda (y-x)}dy -\frac{2}{\sigma^2}\int_m^x h(y)e^{\lambda (y-x)}dy.
\end{displaymath}
Boundary conditions (\ref{V(d)-V(D)}) and (\ref{V(u)-V(U)}) become
\begin{eqnarray}
\lefteqn{  V'(m) \int_d^D e^{\lambda(m-x)}dx +  \gamma \frac{2}{\sigma^2}
\int_d^D \int_m^x
e^{\lambda (y-x)}dy dx }\nonumber \\
&=& \frac{2}{\sigma^2}\int_d^D\int_m^x h(y)e^{\lambda
  (y-x)}dydx  -K-k(D-d),\label{eq:boundaryD} \\
\lefteqn{ V'(m) \int_U^u e^{\lambda(m-x)}dx +  \gamma \frac{2}{\sigma^2}
\int_U^u \int_m^x
e^{\lambda (y-x)}dy dx }\nonumber \\
&=&\frac{2}{\sigma^2}\int_U^u\int_m^x h(y)e^{\lambda
  (y-x)}dydx + L+\ell(u-U).\label{eq:boundaryU}
\end{eqnarray}
Using the coefficients defined in (\ref{eq:coeffa})-(\ref{eq:coeffc}),
we see the
 boundary conditions (\ref{eq:boundaryD}) and
(\ref{eq:boundaryU}) become
\begin{eqnarray*}
&&  a_1 V'(m) -\gamma b_1 = -(c_1+K+k(D-d)), \\
&&  a_2 V'(m) + \gamma b_2 = c_2 + L + \ell (u-U),
\end{eqnarray*}
from which we have unique solution for $\gamma$ and  $V'(m)$
given in (\ref{eq:gamma}) and (\ref{eq:Vprime}).
\end{proof}

\subsection{Optimal Policy and Optimal Parameters}
\label{sec:optimal}
Theorem \ref{thm:lowerbound} suggests the following strategy to obtain
an optimal policy. We hope that a control band
policy is optimal. Therefore, the first task is to find an optimal
policy among all control band policies. We denote this optimal control
band policy by $\varphi^*=\{d^*, D^*, U^*, u^*\}$ with long-run average cost
$\gamma^*$.  We hope that $\gamma^*$ can be used as the constant in
\eqref{eq:lbPoission} of Theorem \ref{thm:lowerbound}. To find the
corresponding $f$ that, together  with the $\gamma^*$,  satisfies all
the conditions of  Theorem
\ref{thm:lowerbound}, we start with the relative
value function
$V(x)$ associated with the policy $\varphi^*$. This relative value
function $V$  is defined on the finite
interval $[d^*, u^*]$. We need to extend $V$ so that it is
defined on the entire real line $\R$. Given that $V(x)$ is the
relative value function, it is natural to extend it in the following way
\begin{equation}
  \label{eq:f}
  f(x) =
  \begin{cases}
   K + k(D^*-x) + V(D^*) & \text{ for } x< d^*, \\
    V(x) & \text{ for } x\in [d^*, u^*], \\
   L + \ell(x-U^*) + V(U^*) & \text{ for } x> u^*.
  \end{cases}
\end{equation}
Boundary conditions (\ref{V(d)-V(D)}) and (\ref{V(u)-V(U)}) ensure the
continuity of $f$ at $d^*$ and $u^*$. Therefore, $f\in C(\R)$.
We are yet to determine the optimal parameters $(d^*, D^*, U^*,
u^*)$. Now we provide an intuitive argument on the conditions that
should be  imposed
on the optimal parameters.
Since we wish $f\in \mathbb{C}^1$, we should have
\begin{equation}
  \label{eq:BoundaryK}
  V'(d^*)=-k, \quad  V'(u^*)=\ell.
\end{equation}
Also, starting from $d^*$, the system should jump to a $D$ that
minimizes
\begin{displaymath}
K+   k(D-d^*) + V(D).
\end{displaymath}
Therefore, at $D=D^*$, $k+V'(D)=0$, namely,
\begin{equation}
  \label{eq:boundary2}
  V'(D^*)=-k.
\end{equation}
Similarly, one should have
\begin{equation}
  \label{eq:boundary3}
  V'(U^*)=\ell.
\end{equation}

In this section, we will first prove in
Theorem~\ref{lem:optimalParameters} the existence of parameters $d^*$,
$D^*$, $U^*$ and $u^*$ such that the relative value function $V$
corresponding the control band policy $\varphi=\{d^*, D^*, U^*, u^*\}$
satisfies (\ref{Poisson equ})-(\ref{V(u)-V(U)}), and
(\ref{eq:BoundaryK})-(\ref{eq:boundary3}).  As part of the solution,
we are to  find the boundary points $d^*$, $D^*$, $U^*$ and $u^*$ from
equations (\ref{Poisson equ})-(\ref{V(u)-V(U)}) and
(\ref{eq:BoundaryK})-(\ref{eq:boundary3}). These equations define a
\emph{free boundary problem}.  The solution to a free boundary problem
is much more
difficult to be found than the one to a boundary value problem.  We
then prove in Theorem~\ref{thm:optimal} that the extension $f$ in
(\ref{eq:f}) and $\gamma^*=\AC(\varphi^*, x)$ jointly satisfy all the
conditions in Theorem \ref{thm:lowerbound}; therefore, the control
band policy  $\varphi^*$ is optimal among all feasible policies.

To ease the presentation, in the rest of this section, we assume that
$\mu>0$.  The statement and  analysis for the cases $\mu<0$ and
$\mu=0$ are analogous and are omitted.

To facilitate the presentation of Theorem \ref{lem:optimalParameters}, we
first find a general solution $V$ to (\ref{Poisson equ}) without
worrying about boundary conditions (\ref{V(d)-V(D)}) and
(\ref{V(u)-V(U)}). Proposition \ref{prop:controlband} shows that such $V$ is
given in the form
\begin{equation}
  \label{eq:integralV}
 V(x)=\int_m^xg(y)y \quad \text{ for } x\in [d^*, u^*],
\end{equation}
 where $g$ is given by (\ref{eq:g}) and $m$ is some constant. Since
 the optimal boundary points $d^*, D^*, U^*,
u^*$ are  yet to be determined,
 the constant $\gamma$ on the right side of (\ref{Poisson equ}) is also
 yet to be determined. Differentiating both sides of (\ref{Poisson
   equ}) with respect to $x$, we have shown that $V'(x)=g(x)$ is a solution to
  \begin{equation}
    \Gamma g(x) + h'(x) = 0 \quad \text{ for all } x\in \R\setminus\{a\},
  \label{eq:gPoisson}
  \end{equation}
In (\ref{eq:g}), we fix $m=a$ and set $A=2\gamma/(\lambda\sigma^2)$
and $B=A-V'(m)$.  Noting that $\lambda/\mu=\frac{2}{\sigma^2}$,
we have $g(x)=g_{A, B}(x)$, where
\begin{eqnarray} \nonumber
g_{A, B}(x)&=& A - B e^{-\lambda(x-a)} -(\lambda/\mu) \int_a^x h(y)  e^{-\lambda(x-y)}dy \\
&=&A-Be^{-\lambda (x-a)}-\frac{\lambda}{\mu}\int_a^x h(x-y+a)
e^{-\lambda (y-a)} dy.
    \label{eq:g1}
\end{eqnarray}
To summarize, we have the following lemma.
\begin{lemma}\label{lem:solutionToPoisson}
  For each $A, B\in \R$, function $g(x)=g_{A, B}(x)$ is a solution to
  equation (\ref{eq:gPoisson}).
\end{lemma}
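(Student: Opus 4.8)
The plan is to treat the statement as a routine verification, since $\Gamma$ is a linear second‑order differential operator and $g_{A,B}$ is an explicit combination of elementary functions and an integral of $h$. Write $g_{A,B}(x)=g_{\hom}(x)+w(x)$, where $g_{\hom}(x)=A-Be^{-\lambda(x-a)}$ is the "homogeneous" part and $w(x)=-(\lambda/\mu)\int_a^x h(y)e^{-\lambda(x-y)}\,dy = -(\lambda/\mu)\,e^{-\lambda x}\int_a^x h(y)e^{\lambda y}\,dy$ is the "particular" part. By linearity of $\Gamma$ it suffices to show $\Gamma g_{\hom}(x)=0$ and $\Gamma w(x)=-h'(x)$ for every $x\in\R\setminus\{a\}$. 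Throughout, the restriction $x\neq a$ is exactly what makes all the differentiations classical, since by Assumption~\ref{assumption:h}(b) the function $h$ is $C^2$ away from $a$, hence $w\in C^2(\R\setminus\{a\})$ and $g_{A,B}\in C^2(\R\setminus\{a\})$.

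For the homogeneous part, $\Gamma A=0$ trivially, and for $\psi(x)=e^{-\lambda(x-a)}$ one has $\psi'=-\lambda\psi$, $\psi''=\lambda^2\psi$, so $\Gamma\psi=\bigl(\tfrac12\sigma^2\lambda^2-\mu\lambda\bigr)\psi$; since $\lambda=2\mu/\sigma^2$ this coefficient is $\tfrac12\sigma^2\lambda^2-\mu\lambda=\mu\lambda-\mu\lambda=0$, i.e. $\lambda$ is precisely the nonzero root of the characteristic polynomial $\tfrac12\sigma^2 r^2+\mu r$. Hence $\Gamma g_{\hom}\equiv 0$ on all of $\R$.

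For the particular part, differentiate $w(x)=-(\lambda/\mu)e^{-\lambda x}F(x)$ with $F(x)=\int_a^x h(y)e^{\lambda y}\,dy$, so that $F'(x)=h(x)e^{\lambda x}$. Then $w'(x)=-(\lambda/\mu)\bigl(-\lambda e^{-\lambda x}F(x)+h(x)\bigr)$ and, differentiating once more (using $F'(x)=h(x)e^{\lambda x}$ and the product rule, valid for $x\neq a$), $w''(x)=-(\lambda/\mu)\bigl(\lambda^2 e^{-\lambda x}F(x)-\lambda h(x)+h'(x)\bigr)$. Substituting into $\Gamma w=\tfrac12\sigma^2 w''+\mu w'$ and collecting the $e^{-\lambda x}F(x)$ terms, their coefficient is again $-(\lambda/\mu)\bigl(\tfrac12\sigma^2\lambda^2-\mu\lambda\bigr)=0$; the $h(x)$ terms cancel in the same way; and the surviving term is $\tfrac12\sigma^2\cdot\bigl(-(\lambda/\mu)h'(x)\bigr)=-h'(x)$ because $\tfrac12\sigma^2\cdot(\lambda/\mu)=\tfrac12\sigma^2\cdot(2/\sigma^2)=1$. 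Thus $\Gamma w(x)=-h'(x)$ for all $x\neq a$, and adding the two pieces gives $\Gamma g_{A,B}(x)+h'(x)=0$, which is \eqref{eq:gPoisson}.

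Alternatively — and this is really the way the text already sets it up — one may simply invoke Proposition~\ref{prop:controlband}: that proposition shows the function $V$ with $V'=g$ as in \eqref{eq:g} solves the Poisson equation \eqref{Poisson equ} on any interval, so differentiating \eqref{Poisson equ} once in $x$ (legitimate away from $a$) yields $\Gamma V'+h'=0$, i.e. $\Gamma g+h'=0$; then specializing $m=a$ and performing the algebraic simplification $\gamma\tfrac{2}{\sigma^2}\int_a^x e^{\lambda(y-x)}\,dy=A\bigl(1-e^{-\lambda(x-a)}\bigr)$ with $A=2\gamma/(\lambda\sigma^2)$ and $B=A-V'(a)$ rewrites $g$ in the form $g_{A,B}$ of \eqref{eq:g1}. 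Either way, there is no genuine obstacle here: the only points requiring a word of care are (i) working on $\R\setminus\{a\}$, where $h''$ exists and all the manipulations are classical, and (ii) the bookkeeping identifying the free constants $(A,B)$ with $(\gamma,V'(a))$, which the discussion preceding the lemma has already arranged.
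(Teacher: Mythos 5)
Your primary argument is correct and is, in fact, a more careful and self-contained proof than the one the paper actually gives. The paper's "proof" is the sentence preceding the lemma: invoke Proposition~\ref{prop:controlband}, differentiate the Poisson equation \eqref{Poisson equ} in $x$ to conclude $\Gamma g + h'=0$, then specialize $m=a$ and perform the change of constants to get the form $g_{A,B}$ in \eqref{eq:g1} — this is exactly your ``alternative'' route. Your main route instead verifies the ODE directly by splitting $g_{A,B}$ into a homogeneous part $A-Be^{-\lambda(x-a)}$ (annihilated by $\Gamma$) and a particular part $w$ with $\Gamma w=-h'$, and checking the algebra: the coefficients of $e^{-\lambda x}F(x)$ and of $h(x)$ both vanish because $\tfrac12\sigma^2\lambda^2-\mu\lambda=0$ and $-\tfrac12\sigma^2\lambda+\mu=0$, and the surviving coefficient gives $-(\lambda/\mu)\cdot\tfrac12\sigma^2=-1$. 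This buys something the paper's derivation elides: Proposition~\ref{prop:controlband} is stated on the bounded interval $[d,u]$, whereas \eqref{eq:gPoisson} is asserted on all of $\R\setminus\{a\}$, and your pointwise verification makes the extension explicit and also makes plain exactly where Assumption~\ref{assumption:h}(b) (so that $h\in C^2$ away from $a$) is used. Two small remarks: (i) you write that ``$\lambda$ is precisely the nonzero root'' of $\tfrac12\sigma^2 r^2+\mu r$; it is $-\lambda$ that is the nonzero root, matching the exponent of $\psi(x)=e^{-\lambda(x-a)}$ — the algebra you carry out is correct, just the parenthetical gloss has a sign slip; (ii) both your computation and the formula \eqref{eq:g1} presuppose $\mu\neq 0$ (the paper has already restricted to $\mu>0$ at this point in Section~\ref{sec:optimal}), which is worth saying explicitly since $\lambda/\mu$ appears.
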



\begin{figure}[t]
  \centering
  \includegraphics[width=10cm]{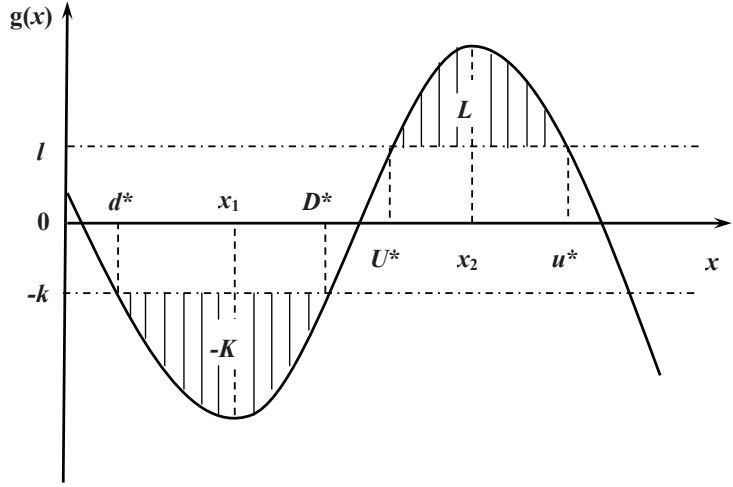}
\caption{ There exist $x_1<x_2$ such that  the function $g$  decreases in
  $(-\infty, x_1)$, increases in $(x_1, x_2)$, and deceases again in $(x_2,
  \infty)$. Parameters $d^*$, $D^*$, $U^*$ and $u^*$ are determined by
$g(d^*)=g(D^*)=-k$, $g(U^*)=g(u^*)=\ell$, the shaded area between
$U^*$ and $u^*$ is $L$, and the shaded area between $d^*$ and $D^*$ is
$K$. In the interval $[d^*, u^*]$, $g$ is the derivative of the
relative value function associated with the control band policy
$\{d^*, D^*, U^*, u^*\}$. }
\label{fig:g1}
\end{figure}
The following theorem characterizes
optimal parameters $(d^*, D^*, U^*, u^*)$ via solution $g=g_{A, B}$. Figure
\ref{fig:g1} depicts the function $g$ used
in the theorem.
\begin{theorem}\label{lem:optimalParameters}
  Assume that the holding cost function $h$ satisfies Assumption
  \ref{assumption:h}.
There exist unique $A^*$, $B^*$, $d^*$, $D^*$, $U^*$ and $u^*$
with
  \begin{displaymath}
    d^*< x_1< D^* < U^*<x_2< u^*
  \end{displaymath}
such that the corresponding $g(x)=g_{A^*, B^*}(x)$ satisfies
\begin{eqnarray}
  && \int_{d^*}^{D^*} [g(x)+k] dx = -K, \label{eq:gK}\\
  && \int_{U^*}^{u^*}[g(x)-\ell] dx = L, \label{eq:gL}\\
  && g(d^*)=g(D^*) =-k, \label{eq:gk}\\
  && g(U^*)=g(u^*) =\ell.\label{eq:gell}
\end{eqnarray}
Furthermore, $g$ has a local minimum at $x_1<a$ and a local maximum at
$x_2>a$. The function $g$ is decreasing on $(-\infty, x_1)$, increasing
on $(x_1, x_2)$ and decreasing again on $(x_2, \infty)$.
\end{theorem}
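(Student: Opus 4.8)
The plan is to reduce the six-equation free-boundary system \eqref{eq:gK}--\eqref{eq:gell} (in the unknowns $A,B,d,D,U,u$, with $g=g_{A,B}$ from \eqref{eq:g1}) to two nested one-dimensional root problems for monotone functions, exploiting the additive splitting $g_{A,B}(x)=A+\tilde g_B(x)$ into a vertical shift $A$ and an $A$-free shape part $\tilde g_B$; in particular $\partial_A g_{A,B}\equiv 1$. Throughout $\mu>0$, hence $\lambda>0$.

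\emph{Step 1 (shape of $g_{A,B}$; this gives the last sentence of the theorem).} Set $q(x):=e^{\lambda x}g_{A,B}'(x)$. From \eqref{eq:gPoisson} one gets $q'(x)=-\tfrac{2}{\sigma^2}e^{\lambda x}h'(x)$, so by Assumption~\ref{assumption:h}(c), $q$ is strictly increasing on $(-\infty,a)$ and strictly decreasing on $(a,\infty)$, with $q(a)=\lambda e^{\lambda a}B$. Assumption~\ref{assumption:h}(d) makes $B_{\max}:=\tfrac1\mu\int_{-\infty}^a|h'(y)|e^{\lambda(y-a)}dy$ finite and gives $\lim_{x\to-\infty}q(x)=\lambda e^{\lambda a}(B-B_{\max})$, while \eqref{eq:hDerivatie} gives $\lim_{x\to+\infty}q(x)=-\infty$. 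Hence $g_{A,B}'=e^{-\lambda x}q$ is negative on $(-\infty,x_1)$, positive on $(x_1,x_2)$, and negative on $(x_2,\infty)$ with two turning points $x_1<a<x_2$ precisely when $B\in(0,B_{\max})$; outside that range $g_{A,B}$ is globally monotone ($B\le 0$) or single-humped ($B\ge B_{\max}$), and neither shape admits an ordering $d<x_1<D<U<x_2<u$ with two level-$(-k)$ crossings and two level-$\ell$ crossings, so $B^*\in(0,B_{\max})$ is forced. Finally, evaluating the identity $\tfrac{\sigma^2}{2}g_{A,B}'+\mu g_{A,B}+h\equiv\mu A$ (a primitive of \eqref{eq:gPoisson}, with $\gamma=\mu A$) at the critical points gives the clean formula $m_i(B):=\tilde g_B(x_i(B))=-h(x_i(B))/\mu$, and one further use of (d) gives $g_{A,B}(x)\to+\infty$ as $x\to-\infty$ and $g_{A,B}(x)\to-\infty$ as $x\to+\infty$.

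\emph{Step 2 (eliminate $d,D,U,u$; inner monotone problem).} Fix $B\in(0,B_{\max})$. Since $g_{A,B}$ is an increasing bijection of $(x_1,x_2)$ onto $\bigl(A+m_1(B),A+m_2(B)\bigr)$ and has the infinite limits from Step~1, the equations \eqref{eq:gk}--\eqref{eq:gell} determine unique $d^*(A)<x_1<D^*(A)$ and $U^*(A)<x_2<u^*(A)$ exactly for $A$ in the interval $I(B):=\bigl(\ell-m_2(B),\,-k-m_1(B)\bigr)$, which is nonempty iff the bump height $m_2(B)-m_1(B)=\tfrac1\mu\bigl(h(x_1(B))-h(x_2(B))\bigr)$ exceeds $k+\ell$. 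Then \eqref{eq:gK}--\eqref{eq:gL} become $\mathcal{K}(A,B)=K$ and $\mathcal{L}(A,B)=L$, where $\mathcal{K}:=-\int_{d^*}^{D^*}(g_{A,B}+k)\,dx$ and $\mathcal{L}:=\int_{U^*}^{u^*}(g_{A,B}-\ell)\,dx$. Because $\partial_A g_{A,B}\equiv1$ and the integrands vanish at the moving endpoints, differentiation under the integral gives $\partial_A\mathcal{K}=-(D^*-d^*)<0$ and $\partial_A\mathcal{L}=u^*-U^*>0$; moreover $\mathcal{K}\to0$ as $A\uparrow-k-m_1(B)$ (the $(-k)$-bump collapses) and $\mathcal{L}\to0$ as $A\downarrow\ell-m_2(B)$ (the $\ell$-bump collapses), with finite positive limits $\mathcal{K}_\sharp(B),\mathcal{L}_\sharp(B)$ at the opposite endpoints. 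Hence for each $B$ with $K<\mathcal{K}_\sharp(B)$ the equation $\mathcal{K}(\cdot,B)=K$ has a unique root $A=A_1(B)$, depending continuously on $B$.

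\emph{Step 3 (outer monotone problem; the main obstacle).} Put $\Psi(B):=\mathcal{L}\bigl(A_1(B),B\bigr)-L$ on $\{B\in(0,B_{\max}):K<\mathcal{K}_\sharp(B)\}$; a zero of $\Psi$ yields $A^*=A_1(B^*)$, $B^*$, and then $d^*,D^*,U^*,u^*$ from Step~2. For existence I would invoke the intermediate value theorem: near the lower end of the domain, $\mathcal{K}_\sharp(B)\downarrow K$ pins $A_1(B)$ to the endpoint of $I(B)$ at which the $\ell$-bump degenerates, so $\Psi<0$; as $B\uparrow B_{\max}$ the bump height grows and $A_1(B)$ is pushed toward the endpoint at which the $\ell$-bump is large, so $\Psi>0$. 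Uniqueness of the root is the hard part: it requires proving $\Psi$ strictly monotone, which means signing $x_1'(B),x_2'(B),m_i'(B)$ from the identities $B=\tfrac1\mu\int_{x_1}^a|h'|e^{\lambda(y-a)}dy=\tfrac1\mu\int_a^{x_2}h'e^{\lambda(y-a)}dy$ and carrying those signs through $A_1(B)$ and the two bump integrals, as well as checking that the admissible $B$-range is nonempty for the given $K,L$. Everything else is the qualitative calculus of Steps~1--2; once $\Psi$ is shown strictly monotone, existence and uniqueness of $(A^*,B^*,d^*,D^*,U^*,u^*)$, together with all the claimed inequalities, follow at once.
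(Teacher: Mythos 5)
Your decomposition matches the paper's essentially step for step: a shape lemma for $g_{A,B}$, then a reduction to two nested one-dimensional root problems via the splitting $g_{A,B}=A+g_{0,B}$, with the inner problem solved by monotonicity in $A$ (using $\partial_A g\equiv 1$ and vanishing integrands at moving endpoints) and the outer by monotonicity in $B$. The only structural difference is that you solve the $K$-constraint inner and the $L$-constraint outer, whereas the paper's Lemmas \ref{lem:Astar} and \ref{lem:d D} do the reverse; that is a harmless relabeling, and your formula $\frac{dA_1}{dB}=\frac{\int_{d}^{D}e^{-\lambda(x-a)}dx}{D-d}$ would play exactly the role of the paper's \eqref{eq:dA/dB}, with the sign of $\partial_B g_{A_1(B),B}(x)$ on $[U,u]$ coming from $e^{-\lambda(\cdot-a)}$ being decreasing and $d<D<U<u$, mirroring \eqref{eq:dgAstardB}.

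However, Step~3 as written is a statement of what remains to be done, not a proof, and the material it defers is precisely the substance of the theorem. You acknowledge that the strict monotonicity of $\Psi$ ``is the hard part'' and then say it ``requires proving $\Psi$ strictly monotone, which means signing $x_1'(B),x_2'(B),m_i'(B)$ \dots and carrying those signs through''; that sign-chasing is exactly what the paper carries out across Lemmas \ref{lem:x_i}, \ref{lem:B1}, \ref{lem:B1B2}, and \ref{lem:d D} via the implicit function theorem (equations \eqref{eq:dx_1/dB}--\eqref{eq:dx_2/dB}, \eqref{eq:d wide(g)/dB}, \eqref{eq:dA/dB}--\eqref{eq:dLambda1AstardB}), and it is by far the longest and most delicate part of Section~\ref{sec:optimal-control-band}. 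The IVT endpoint claims are likewise asserted but unverified: establishing that the admissible $B$-range is a nonempty interval and that $\Psi$ changes sign across it corresponds to the paper's Lemma \ref{lem:B1B2} (existence of $\underline{B}_2$ with the threshold property \eqref{eq:B2}, requiring the limit $\Lambda_2(\overline{A}(B),B)\to\infty$) and to the limit \eqref{eq:Lambda1negativeInfinity}, both of which involve nontrivial estimates --- e.g., bounding $\Lambda_1$ via an auxiliary level $-k-M_1$ and the divergence $d_1(B)\to-\infty$. In short, the roadmap is right and would lead to a valid proof, but as a proof it has a genuine gap: the monotonicity and boundary-limit lemmas that the theorem actually turns on are left undemonstrated.
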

If $g$ satisfies all conditions (\ref{eq:gPoisson}),
(\ref{eq:gK})-(\ref{eq:gell}) in Theorem~\ref{lem:optimalParameters},
$V(x)$ in (\ref{eq:integralV})  clearly satisfies all conditions
(\ref{Poisson equ})-(\ref{V(u)-V(U)}) and
(\ref{eq:BoundaryK})-(\ref{eq:boundary3}).
The proof of Theorem~\ref{lem:optimalParameters} is long, and we
defer  it to end of this section.

\begin{theorem}\label{thm:optimal}
  Assume that the holding cost function $h$ satisfies Assumption
  \ref{assumption:h}. Let $d^*<D^*<U^*<u^*$, along with constants $A^*$ and
  $B^*$, be the unique solution in
  Theorem~\ref{lem:optimalParameters}. Then the control band policy
  $\varphi^*=\{d^*, D^*, U^*, u^*\}$ is optimal among all
 feasible policies.
\end{theorem}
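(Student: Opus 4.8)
The plan is to invoke the verification theorem, Theorem~\ref{thm:lowerbound}, with the constant $\gamma = \gamma^* := \AC(x,\varphi^*)$ and with the test function $f$ obtained by extending, via~\eqref{eq:f}, the relative value function $V$ of~\eqref{eq:integralV} associated with $\varphi^*$. Two things must be established: first, that $\AC(x,\varphi^*)$ really equals the constant $\gamma^*$ produced in Theorem~\ref{lem:optimalParameters}; second, that $(f,\gamma^*)$ satisfies every hypothesis of Theorem~\ref{thm:lowerbound}. The first point is immediate from what has already been noted after Theorem~\ref{lem:optimalParameters}: conditions~\eqref{eq:gK}--\eqref{eq:gell} force $V = \int_m^\cdot g$ to satisfy the Poisson equation~\eqref{Poisson equ} together with~\eqref{V(d)-V(D)}--\eqref{V(u)-V(U)} (write $V(d^*)-V(D^*) = -\int_{d^*}^{D^*} g\,dx = -\int_{d^*}^{D^*}(g+k)\,dx + k(D^*-d^*) = K + k(D^*-d^*)$ using~\eqref{eq:gK}, and symmetrically at the upper band using~\eqref{eq:gL}), so Theorem~\ref{thm:control band} gives $\AC(x,\varphi^*) = \gamma^*$ for every $x$. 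Once the second point is in place, Theorem~\ref{thm:lowerbound} yields $\AC(x,\varphi) \ge \gamma^* = \AC(x,\varphi^*)$ for every feasible $\varphi$ and every $x$, which is the claim. As in the surrounding discussion I take $\mu > 0$; the cases $\mu \le 0$ are analogous.

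For the regularity hypotheses, $f$ is continuous at $d^*$ and $u^*$ by~\eqref{V(d)-V(D)}--\eqref{V(u)-V(U)}, and $f'$ is continuous there because $g(d^*) = -k$ and $g(u^*) = \ell$ from~\eqref{eq:gk}--\eqref{eq:gell} match the slopes of the two linear pieces; hence $f \in C^1(\R)$. On each of the three pieces $f'$ is either constant or equal to $g$, and $g$ is $C^1$ on $[d^*,u^*]$ since it solves~\eqref{eq:gPoisson}, which rearranges to $g'' = -\tfrac{2}{\sigma^2}(\mu g' + h')$ with $h'$ locally $L^1$ (it is continuous off $a$ and monotone by convexity); therefore $f'$ is absolutely continuous with $f''$ locally $L^1$. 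Finally $f'$ is bounded: it is constant off $[d^*,u^*]$ and continuous on the compact $[d^*,u^*]$, so $|f'| \le M$ for some $M$.

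The heart of the argument is the verification of the three inequalities~\eqref{eq:lbPoission}--\eqref{eq:lbL}, where the decreasing/increasing/decreasing shape of $g$ from Theorem~\ref{lem:optimalParameters} (and Figure~\ref{fig:g1}) does the work. For~\eqref{eq:lbK}--\eqref{eq:lbL} I would set $\psi(x) := f(x) + kx$ and $\chi(x) := f(x) - \ell x$; the two inequalities say precisely $\sup_{y<x}\psi(y) - \psi(x) \le K$ and $\sup_{y>x}\chi(y) - \chi(x) \le L$. Since $g$ crosses $-k$ only at $d^*$ and $D^*$ and crosses $\ell$ only at $U^*$ and $u^*$, with $d^* < x_1 < D^* < U^* < x_2 < u^*$, one checks $\psi' = f' + k \le 0$ on $(-\infty,D^*]$ and $\ge 0$ on $[D^*,\infty)$, so $\psi$ is constant on $(-\infty,d^*]$, strictly decreasing on $[d^*,D^*]$, and increasing on $[D^*,\infty)$, with global minimum $\psi(D^*)$; consequently $\sup_{y<x}\psi(y) = \max\{\psi(d^*),\psi(x)\}$ for every $x$, so $\sup_{y<x}\psi(y) - \psi(x) \le \psi(d^*) - \psi(D^*) = -\int_{d^*}^{D^*}(g+k)\,dx = K$ by~\eqref{eq:gK}. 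The identical reasoning for $\chi$, now using that $\chi$ is decreasing on $(-\infty,U^*]$, increasing on $[U^*,u^*]$, and constant on $[u^*,\infty)$, with $\chi(u^*) - \chi(U^*) = \int_{U^*}^{u^*}(g-\ell)\,dx = L$ by~\eqref{eq:gL}, gives~\eqref{eq:lbL}.

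For~\eqref{eq:lbPoission} put $G(x) := \Gamma f(x) + h(x) - \gamma^*$. On $[d^*,u^*]$ one has $G \equiv 0$ because $f = V$ solves~\eqref{Poisson equ}. On $(-\infty,d^*)$, $f$ is affine, so $G' = h' < 0$ there (recall $d^* < x_1 < a$), hence $G$ is nonincreasing and $G(x) \ge G(d^*-)$; moreover $G(d^*-) - G(d^*+) = -\tfrac12\sigma^2 g'(d^*) \ge 0$ because $g$ is decreasing at $d^*$, while $G(d^*+) = 0$, so $G \ge 0$ on $(-\infty,d^*)$. Symmetrically, on $(u^*,\infty)$, $G' = h' > 0$ (since $u^* > x_2 > a$), $G$ is nondecreasing, and $G(u^*+) = -\tfrac12\sigma^2 g'(u^*) \ge 0$ because $g$ is decreasing at $u^*$, so $G \ge 0$ on $(u^*,\infty)$. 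This last step is the one I expect to be the most delicate, as it is exactly where the fine output of Theorem~\ref{lem:optimalParameters} is needed: the ordering of $x_1,x_2$ relative to the four band endpoints, the signs of $g'$ at those endpoints, and the single-signedness of $h'$ on each side of $a$. With~\eqref{eq:lbPoission}--\eqref{eq:lbL} verified and the regularity in hand, Theorem~\ref{thm:lowerbound} completes the proof.
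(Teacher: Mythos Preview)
Your proposal is correct and follows essentially the same approach as the paper's proof: both construct the same test function $f$ (the paper writes it as $V(x)=\int_{d^*}^x\bar g$ with $\bar g$ the truncation of $g$, which coincides with your $f$ up to an irrelevant additive constant), verify the regularity and boundedness of $f'$, and then check \eqref{eq:lbPoission}--\eqref{eq:lbL} using exactly the shape information from Theorem~\ref{lem:optimalParameters}---namely $d^*<x_1<D^*<U^*<x_2<u^*$, the sign of $g'$ at $d^*$ and $u^*$, and the crossing structure of $g$ with the levels $-k$ and $\ell$. The only cosmetic difference is packaging: you introduce the auxiliary functions $\psi=f+kx$, $\chi=f-\ell x$, and $G=\Gamma f+h-\gamma^*$ and analyze their monotonicity, whereas the paper bounds the integrals $\int_y^x[\bar g+k]$ and $\int_x^y[\bar g-\ell]$ directly and handles \eqref{eq:lbPoission} by computing $\Gamma V+h$ pointwise outside the band; the underlying inequalities and the facts used are identical.
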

\begin{proof}
Let $g(x)$ be the function in (\ref{eq:g1}) with $A=A^*$
and $B=B^*$. Let
\begin{displaymath}
  \bar g(x) =
  \begin{cases}
    -k, & x< d^*, \\
    g(x), & d^*\le x \le u^*, \\
    \ell, & x \ge u^*.
  \end{cases}
\end{displaymath}
Conditions (\ref{eq:gk}) and (\ref{eq:gell}) ensure that $\bar{g}$ is $C(\R)$.
Define
\begin{displaymath}
  V(x) = \int_{d^*}^x \bar g(y)dy.
\end{displaymath}
Let $\gamma^*$ be the long-run average cost under policy
$\varphi^*$. We now show that $V$ and $\gamma^*$ satisfy all the
conditions in Theorem \ref{thm:lowerbound}. Thus, Theorem
\ref{thm:lowerbound} shows that the long-run average cost under any
feasible
policy is at least $\gamma^*$. Since $\gamma^*$ is the long-run
average cost under the control band policy $\varphi^*$, $\gamma^*$ is
the optimal cost and the control band policy $\varphi^*$ is optimal
among all feasible policies.

First,  $V(x)$ is in $C^2((d^*, u^*))$. Condition \eqref{eq:gK}
implies
\begin{equation}
  \label{eq:boundaryStarK}
  V(d^*)-V(D^*) = K  + k(D^*-d^*)
\end{equation}
and \eqref{eq:gL} implies
\begin{displaymath}
  V(u^*)-V(U^*) = L  + \ell(u^*-U^*).
\end{displaymath}
Equation \eqref{eq:gPoisson} implies that $V$ satisfies
\begin{displaymath}
 \Gamma V + h(x) = \text{ constant for }  x\in (d^*, u^*).
\end{displaymath}
By Theorem \ref{thm:control band}, the constant must be the long-run
average cost $\gamma^*$ under control band policy $\varphi^*$.

Now, we show that $V(x)$ satisfies the rest of conditions in Theorem
\ref{thm:lowerbound}. Conditions \eqref{eq:gk} and \eqref{eq:gell}
imply that truncated function $\bar g$ is continuous in
$\R$. Therefore, $V\in \mathbb{C}^1(\R)$. Clearly, $V''(x)=0$ for $x\not \in
[d^*, u^*]$, and  $V''(x)=g'(x)$ for $x\in (d^*, u^*)$. Let
\begin{displaymath}
  M = \sup_{x\in [d^*, u^*]} \abs{g(x)}.
\end{displaymath}
We have $\abs{V'(x)}\le M$ for all $x\in \R$.
Because
\begin{displaymath}
 \Gamma V+h(x) = \gamma^* \quad \text{ for } x\in (d^*, u^*),
\end{displaymath}
\eqref{eq:lbPoission} is satisfied for $x\in (d^*, u^*)$.
In particular
\begin{displaymath}
\frac{1}{2}\sigma^2  g'(d^*)+\mu g(d^*)+h(d^*)=\gamma^*
\end{displaymath}
and
\begin{displaymath}
\frac{1}{2}\sigma^2  g'(u^*)+\mu g(u^*)+h(u^*)=\gamma^*.
\end{displaymath}
It follows from  part (b) and part (c)  of
Lemma~\ref{lem:optimalParameters} in Section
\ref{sec:optimal-control-band} that $d^*<x_1<a<x_2<u^*$,
$g'(d^*)\le 0$  and $g'(u^*)\le 0$ (see Figure \ref{fig:g1}).
Thus, we have $\mu g(d^*)+h(d^*)\ge \gamma^*$
and $\mu g(u^*)+h(u^*)\ge\gamma^*$. Now,
for $x<d^*$, $\Gamma V(x)+h(x)=\mu (-k)+h(x)\ge \mu g(d^*)+h(d^*)\ge\gamma^*$. Similarly, for
$x>u^*$, $\Gamma V(x)+h(x)=\mu(\ell)+h(x)\ge\mu g(u^*)+h(u^*)\ge \gamma^*$.

Now we verify that $V$ satisfies \eqref{eq:lbK}.
Let $x, y\in \R$ with $y<x$.
Then,
\begin{eqnarray*}
  V(x)-V(y) + k(x-y) &= & \int_y^x[\bar g(z)+k]dz \\
    &\ge &  \int_{(y\vee d^*)\wedge D^*}^{(x\wedge D^*)\vee d^*}[\bar g(z)+k]dz \\
    &\ge &  \int_{d^*}^{D^*}[\bar g(z)+k]dz \\
  &=& -K,
\end{eqnarray*}
where the first inequality follows from $\bar g(z)=g(z)=-k$ for $z\le
d^*$ and  $\bar g(z)=g(z)\ge -k$ for $D^*< z<
u^*$ and $\bar g(z)=\ell \ge -k$ for $z\ge u^*$,
 and the second inequality follows from the fact that $\bar
 g(z)=g(z)\le -k$ for
$z\in [d^*, D^*]$; see, Figure \ref{fig:g1}. Thus \eqref{eq:lbK} is
proved.

It remains to verify that $V$ satisfies \eqref{eq:lbL}.
For $x, y\in \R$ with $y>x$.
\begin{eqnarray*}
  V(y)-V(x) -\ell (y-x) &= & \int_x^y[\bar g(z)-\ell]dz \\
    &\le &  \int_{(x\vee U^*)\wedge u^*}^{(y\wedge u^*)\vee U^*}[\bar g(z)-\ell]dz \\
    &\le &  \int_{U^*}^{u^*}[\bar g(z)-\ell]dz \\
  &=& L,
\end{eqnarray*}
proving \eqref{eq:lbL}.
\end{proof}

\subsection{Optimal Control Band Parameters}
\label{sec:optimal-control-band}
This section is devoted to the proof of Theorem
\ref{lem:optimalParameters}.  We separate the proof into a series of
lemmas. Throughput of this section,
we assume that $\mu>0$ and that the holding cost function $h$ satisfies
Assumption \ref{assumption:h}. Recall the $\lambda$ defined in
(\ref{eq:lambda}).

Define
\begin{eqnarray}\label{eq:under b over B}
\overline{B}=-\frac{1}{\mu}\int_{-\infty}^a
h'(y)e^{\lambda(y-a)}dy.
\end{eqnarray}
Because $h'(x)< 0$ for $x<a$, $\overline{B}>0$.
For $A, B\in \R$, recall the function $g_{A, B}$ defined in
(\ref{eq:g1}). We sometime use the fact that
\begin{equation}
  \label{eq:gABg0B}
  g_{A, B}(x)= A + g_{0, B}(x) \quad  \text{ for } x\in \R.
\end{equation}
 When the context is clear, we simply use $g$ to denote $g_{A, B}$.
For the following lemma, readers are referred to Figure \ref{fig:g1}.
\begin{lemma}
  \label{thm:optimalParameter}
  (a) For any $A\in \R$ and for each fixed
  $B\in(0,\overline{B})$, $g_{A, B}$ attains a unique
  minimum in $(-\infty, a)$ at $x_1=x_1(B)\in (-\infty, a)$. The
  function $g_{A, B}$ attains a unique maximum in $(a, \infty)$ at
  $x_2=x_2(B) \in (a, \infty)$. Both $x_1(B)$ and $x_2(B)$  are
  independent of $A$.

  (b) For each fixed $B\in(0,\overline{B})$, the local
  minimizer $x_1=x_1(B)$ is the unique solution in $(-\infty, a)$ to
  \begin{equation}
    \label{eq:xiBequation}
    B-\frac{1}{\mu}\int_a^x h'(y)  e^{\lambda (y-a)} dy=0.
  \end{equation}
  The local maximizer $x_2=x_2(B)$ is the unique solution in $(a,
  \infty)$ to (\ref{eq:xiBequation}).

  (c) For each $B\in(0,\overline{B})$, $g_{A, B}'(x)<0$
  for $x\in (-\infty, x_1(B))$, $g_{A, B}'(x)>0$ for $x\in (x_1(B), x_2(B))$, and
$g_{A, B}'(x)<0$ for $x\in(x_2(B), \infty)$.
\end{lemma}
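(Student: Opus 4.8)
The plan is to reduce the entire lemma to the sign analysis of one auxiliary function. \textbf{Step 1: differentiate $g_{A,B}$.} Starting from the representation \eqref{eq:g1}, I would differentiate under the integral sign; the boundary term vanishes because $h(a)=0$, and after reverting the change of variables one obtains the clean factorization
\[
  g_{A,B}'(x) \;=\; \lambda\, e^{-\lambda(x-a)}\,\psi_B(x),
  \qquad
  \psi_B(x) \;:=\; B - \frac{1}{\mu}\int_a^x h'(y)\, e^{\lambda(y-a)}\,dy .
\]
Since $\lambda e^{-\lambda(x-a)}>0$ and $\psi_B$ does not depend on $A$ (consistent with \eqref{eq:gABg0B}), the $A$-independence assertions in part (a) are immediate, and the whole lemma follows once the sign pattern of $\psi_B$ is pinned down.

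\textbf{Step 2: analyze $\psi_B$.} The function $\psi_B$ is continuous on $\R$ (a jump of $h'$ at $a$ only makes $\psi_B'$ discontinuous there, not $\psi_B$), with $\psi_B(a)=B>0$ and $\psi_B'(x)=-\mu^{-1}h'(x)e^{\lambda(x-a)}$. By Assumption~\ref{assumption:h}(c) and $\mu>0$, we have $\psi_B'>0$ on $(-\infty,a)$ and $\psi_B'<0$ on $(a,\infty)$, so $\psi_B$ is strictly increasing on $(-\infty,a)$ and strictly decreasing on $(a,\infty)$. The left tail is controlled by \eqref{eq:hIntegral}: using the definition \eqref{eq:under b over B} of $\overline{B}$, $\lim_{x\to-\infty}\psi_B(x)=B-\overline{B}<0$ because $B\in(0,\overline{B})$. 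The right tail is controlled by convexity of $h$: since $h'$ is nondecreasing and positive on $(a,\infty)$, we have $h'(y)\ge h'(x_0)>0$ for $y\ge x_0>a$, hence $\int_a^x h'(y)e^{\lambda(y-a)}dy\to+\infty$ and so $\psi_B(x)\to-\infty$. The intermediate value theorem together with strict monotonicity on each side of $a$ then yields a unique $x_1=x_1(B)\in(-\infty,a)$ and a unique $x_2=x_2(B)\in(a,\infty)$ with $\psi_B(x_1)=\psi_B(x_2)=0$; these two equations are exactly \eqref{eq:xiBequation}, which is part (b).

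\textbf{Step 3: read off (a) and (c).} From the monotonicity of $\psi_B$ and the locations of its only two zeros, $\psi_B<0$ on $(-\infty,x_1)$, $\psi_B>0$ on $(x_1,x_2)$, and $\psi_B<0$ on $(x_2,\infty)$; multiplying by $\lambda e^{-\lambda(x-a)}>0$ transfers this sign pattern to $g_{A,B}'$, which is part (c). Consequently $g_{A,B}$ decreases on $(-\infty,x_1)$ and increases on $(x_1,a)$, so $x_1$ is the unique minimizer of $g_{A,B}$ in $(-\infty,a)$; symmetrically $g_{A,B}$ increases on $(a,x_2)$ and decreases on $(x_2,\infty)$, so $x_2$ is the unique maximizer in $(a,\infty)$, giving part (a). I expect the only delicate point to be Step 1 — obtaining the exact factorization $g_{A,B}'=\lambda e^{-\lambda(x-a)}\psi_B$ while correctly accommodating the possible jump of $h'$ at $a$ (differentiation under the integral itself being routine since $h'$ is locally bounded) — together with the $x\to+\infty$ limit of $\psi_B$, which is the one place where convexity of $h$, rather than any decay hypothesis, is what is used.
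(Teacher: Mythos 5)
Your proof is correct and follows essentially the same route as the paper: the auxiliary function $\psi_B$ you introduce is exactly the paper's $F_1(B,\cdot)$, and the argument (factorize $g'_{A,B}=\lambda e^{-\lambda(x-a)}\psi_B$, show $\psi_B$ increases on $(-\infty,a)$ and decreases on $(a,\infty)$ with $\psi_B(a)=B>0$ and both tails negative, then read off the sign pattern) is the paper's argument verbatim. The only difference is cosmetic: you explicitly justify the right-tail limit $\psi_B(x)\to-\infty$ via convexity of $h$, a step the paper states without comment.
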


\begin{proof}
Differentiating $g(x)=g_{A, B}(x)$ in (\ref{eq:g1}) and noting $h(a)=0$, we have
\begin{eqnarray}
  g'(x)&=&\lambda Be^{-\lambda (x-a)}-\frac{\lambda}{\mu}\int_a^x h'(x-y+a)  e^{-\lambda (y-a)} dy
 \label{eq:g'}\\
 &=&  \lambda\Big(B-\frac{1}{\mu}\int_a^x h'(y)
 e^{\lambda (y-a)} dy\Big)e^{-\lambda (x-a)} \nonumber\\
&=& \lambda F_1(B,x) e^{-\lambda (x-a)}, \nonumber
\end{eqnarray}
where, for $x\in \R$,
\begin{equation}
  \label{eq:F1}
F_1(B,x)= B-\frac{1}{\mu}\int_a^x h'(y)  e^{\lambda (y-a)} dy.
\end{equation}
Clearly $g'(x)=0$ if and only if $F_1(B,x)=0$. Because
\begin{eqnarray}
\label{eq:s2}
\frac{\partial}{\partial x}F_1(B,x)=-\frac{1}{\mu}h'(x)e^{\lambda(x-a)}
\end{eqnarray}
and $h'(x)<0$ for $x<a$ and $h'(x)>0$ for $x>a$, we have
that $F_1(B,x)$ increases in $x<a$ and decreases in $x>a$.
For  $B>0$, we have
\begin{displaymath}
F_1(B, a)=B>0.
\end{displaymath}
For any  $B\in(0, \overline{B})$,
\begin{eqnarray*}
\lim_{x\downarrow -\infty}F_1(B,x)=B-\overline{B}<0.
\end{eqnarray*}
Therefore, there exists a unique
$x_1=x_1(B)\in(-\infty,a)$ such that $F_1(B, x_1)=0$ or equivalently
$g'(x_1)=0$.
Also, for any fixed $B$
\begin{eqnarray*}
\lim_{x\uparrow +\infty}F_1(B,x)=-\infty.
\end{eqnarray*}
Therefore, for any $B> 0$,
there exists a unique $x_2=x_2(B)\in (a, \infty)$ such that
$F_1(B,x_2)=0$ or equivalently $g'(x_2)=0$. For $B\in (0, \overline{B})$, it is clear that
\begin{eqnarray*}
g'(x)<0 \text{ for } x\in (-\infty, x_1), \quad
 g'(x)>0 \text{ for } x\in(x_1, x_2) \quad \text{and} \quad
g'(x) <0 \text{ for } x\in (x_2, \infty).
\end{eqnarray*}
Thus the lemma is proved.
\end{proof}
\begin{remark}
  The local maximizer $x_2(B)$ is well defined for all
$B\in(0,\infty)$, whereas the local minimizer
$x_1(B)$ is defined only for $B\in(0, \overline{B})$.
\end{remark}

\begin{lemma}
\label{lem:x_i}
(a) The local minimizer  $x_1(B)$  is continuous and strictly
decreasing in $B\in(0, \overline{B})$.
 The local maximizer $x_2(B)$  is continuous and strictly
increasing  in $B\in(0, \infty)$.
Furthermore,
 \begin{equation}
   \label{eq:x1Blimit}
 \lim_{B\downarrow 0}  x_i(B)=a \quad i=1, 2
 \end{equation}
and
\begin{equation}
  \label{eq:x2Blimit}
 \lim_{B\uparrow \overline{B}}  x_1(B) =-\infty \quad \text{ and } \quad
 \lim_{B\uparrow \overline{B}}  x_2(B)=x_2(\overline{B})\in (a, \infty).
\end{equation}
(b) For each $B\in(0, \overline{B})$,
\begin{equation}
  \label{eq:gh}
  g_{A,B}(x_i(B))= A - \frac{1}{\mu}h(x_i(B)) \text{ for } i =1,2.
\end{equation}
\end{lemma}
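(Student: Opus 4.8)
The plan is to deduce everything from the implicit description of $x_1(B)$ and $x_2(B)$ as the two roots of $F_1(B,\cdot)=0$, with $F_1$ the function in~\eqref{eq:F1}, using the sign pattern of $\partial_x F_1$ recorded in~\eqref{eq:s2} together with $\partial_B F_1\equiv 1$. For monotonicity in part~(a): if $0<B<B'$ then $F_1(B',x)=F_1(B,x)+(B'-B)>F_1(B,x)$ for every $x$; since $x\mapsto F_1(B,x)$ is strictly increasing on $(-\infty,a)$ with unique root $x_1(B)$ there (for $B<\overline B$), the shifted function $x\mapsto F_1(B',x)$ is already positive at $x_1(B)$, so its root $x_1(B')$ lies strictly to the left, giving strict decrease of $x_1$; the same shift together with the strict decrease of $x\mapsto F_1(B',x)$ on $(a,\infty)$ gives $x_2(B')>x_2(B)$, now for all $B,B'\in(0,\infty)$. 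Continuity then follows from monotonicity by a standard subsequence argument: along $B_n\to B_0$ (interior to the domain) the monotone sequence $x_i(B_n)$ stays in a compact interval, any limit point $c$ satisfies $F_1(B_0,c)=0$ by joint continuity of $F_1$, hence $c\ne a$ (because $F_1(B_0,a)=B_0>0$) and $c=x_i(B_0)$ by uniqueness; one could instead invoke the implicit function theorem, valid near the roots since $h'$ is $C^1$ away from $a$.

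For the boundary behaviour in part~(a): as $B\downarrow 0$ the monotone limits satisfy $x_1(B)\uparrow c_1\le a$ and $x_2(B)\downarrow c_2\ge a$, and passing to the limit in $F_1(B,x_i(B))=0$ forces $\int_{c_1}^a h'(y)e^{\lambda(y-a)}\,dy=0=\int_a^{c_2}h'(y)e^{\lambda(y-a)}\,dy$; since $h'$ keeps a fixed strict sign on each side of $a$ this is only possible if $c_1=c_2=a$, which is~\eqref{eq:x1Blimit}. As $B\uparrow\overline B$: $\overline B$ is interior to the domain $(0,\infty)$ of $x_2$, so the continuity just proved gives $x_2(B)\to x_2(\overline B)\in(a,\infty)$; for $x_1$, if its monotone limit $c_1$ were finite, then rewriting the root equation as $B=-\frac1\mu\int_{x_1(B)}^a h'(y)e^{\lambda(y-a)}\,dy$ and letting $B\uparrow\overline B$ would yield, via the definition~\eqref{eq:under b over B} of $\overline B$ and the integrability~\eqref{eq:hIntegral}, the equality $\int_{-\infty}^{c_1}h'(y)e^{\lambda(y-a)}\,dy=0$, contradicting $h'<0$ on $(-\infty,a)$; hence $x_1(B)\to-\infty$. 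This last implication is the real content of the lemma and is precisely where Assumption~\ref{assumption:h}(d) enters (it is what makes $\overline B$ finite); everything else is routine bookkeeping with monotone functions.

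For part~(b) I will first establish the first-order identity
\[
\frac12\sigma^2 g_{A,B}'(x)+\mu g_{A,B}(x)+h(x)=\mu A\qquad\text{for all }x\in\R ,
\]
either by integrating the relation $\Gamma g_{A,B}+h'=0$ from~\eqref{eq:gPoisson} once and identifying the integration constant by evaluating the explicit formula~\eqref{eq:g1} at $x=a$ (there $g_{A,B}(a)=A-B$ and $g_{A,B}'(a)=\lambda B$, and $\frac12\sigma^2\lambda=\mu$), or by differentiating~\eqref{eq:g1} directly. Evaluating this identity at $x=x_i(B)$ and using $g_{A,B}'(x_i(B))=0$, which holds because $F_1(B,x_i(B))=0$ in view of~\eqref{eq:g'}, gives $\mu g_{A,B}(x_i(B))+h(x_i(B))=\mu A$, i.e.~\eqref{eq:gh}. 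Since $g_{A,B}=A+g_{0,B}$ and $x_i(B)$ does not depend on $A$, it would in fact suffice to treat the case $A=0$.
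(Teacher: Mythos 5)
Your proof is correct, and in both parts it diverges from the paper's route in ways worth noting.

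For the monotonicity in part~(a), the paper invokes the Implicit Function Theorem to get the explicit derivative $\tfrac{dx_1}{dB}=\mu/\bigl(h'(x_1(B))e^{\lambda(x_1(B)-a)}\bigr)<0$ (and similarly for $x_2$), obtaining continuous differentiability and strict monotonicity in one shot. You instead use the translation $F_1(B',x)=F_1(B,x)+(B'-B)$ together with the strict monotonicity of $x\mapsto F_1(B,x)$ on each side of $a$, which is more elementary (no smoothness of $h'$ needed for this step) but gives only continuity, not differentiability --- a fact the paper uses later, e.g.\ in \eqref{eq:dx_1/dB}--\eqref{eq:dx_2/dB} and \eqref{eq:d wide(g)/dB}, so the IFT derivation is not really optional for the surrounding development; you do mention it as an alternative, which covers this. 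You also spell out the limit claims \eqref{eq:x1Blimit}--\eqref{eq:x2Blimit}, which the paper dismisses with ``can be proved easily''; your argument that a finite $c_1=\lim x_1(B)$ would force $\int_{-\infty}^{c_1}h'(y)e^{\lambda(y-a)}\,dy=0$ is exactly the right use of the definition \eqref{eq:under b over B} of $\overline B$ and the strict sign of $h'$.

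For part~(b), the paper computes $g_{A,B}(x_i(B))$ directly from the integral formula \eqref{eq:g1} via an integration by parts. You instead derive the first integral
\[
\tfrac12\sigma^2 g_{A,B}'(x)+\mu g_{A,B}(x)+h(x)=\mu A
\]
of the ODE \eqref{eq:gPoisson} by integrating once and fixing the constant at $x=a$ (using $h(a)=0$, $g_{A,B}(a)=A-B$, $g_{A,B}'(a)=\lambda B$, and $\tfrac12\sigma^2\lambda=\mu$), then evaluate at the critical points. This is cleaner and more conceptual --- it makes the identity \eqref{eq:gh} transparent as ``the ODE at a stationary point'' rather than as a computational coincidence --- and it would survive unchanged if the explicit formula for $g_{A,B}$ were replaced by any other solution of \eqref{eq:gPoisson} with the same initial data at $a$. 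Both arguments are correct; yours buys conceptual clarity at no cost.
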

\begin{proof}
(a) Recall the function $F_1$ defined in (\ref{eq:F1}).
Obviously, $F_1$, $\frac{\partial F_1}{\partial B}$,  $\frac{\partial F_1}{\partial x}$ are continuous,
and $\frac{\partial F_1}{\partial  x}$ is given in (\ref{eq:s2}). One has
\begin{displaymath}
\frac{\partial F_1}{\partial
  x} > 0 \quad \text{ for }
x \in(-\infty, a),
\end{displaymath}
where we have used the fact that $h'(x)<0$ for $x\in (-\infty, a)$.
Using the {\em Implicit Function Theorem}, $x_1(B)$ is continuously
differential in $B\in(0, \overline{B})$, and
\begin{eqnarray}
\frac{d x_1(B)}{d
  B}=\frac{\mu}{h'(x_1(B))e^{\lambda(x_1(B)-a)}}<0. \label{eq:dx_1/dB}
\end{eqnarray}
Thus,  $x_1(B)$ is strictly decreasing in $B\in(0, \overline{B})$. Similarly, we have
\begin{eqnarray}
\frac{d x_2(B)}{d B}=\frac{\mu}{h'(x_2(B))e^{\lambda(x_2(B)-a)}}>0 \label{eq:dx_2/dB}
\end{eqnarray}
 proving that $x_2(B)$ continuously differential and strictly
 increasing  in $B\in(0, \infty)$.
 The limits in (\ref{eq:x1Blimit}) and (\ref{eq:x2Blimit}) can be
 proved easily following the definition of $x_1(B)$  and $x_2(B)$.

(b)  We have from (\ref{eq:g1}) and (\ref{eq:xiBequation}) that
\begin{eqnarray*}
&&g_{A,B}(x_i(B))\nonumber\\
&&\ \ \ =A-Be^{-\lambda (x_i(B)-a)}-\frac{\lambda}{\mu}\int_a^{x_i(B)} h(x_i(B)-y+a)  e^{-\lambda (y-a)} dy\nonumber\\
&&\ \ \ =A-\frac{1}{\mu}\int_a^{x_i(B)} h'(x_i(B)-y+a)e^{-\lambda (y-a)} dy
-\frac{\lambda}{\mu}\int_a^{x_i(B)} h(x_i(B)-y+a)  e^{-\lambda (y-a)} dy\nonumber\\
&&\ \ \ =A-\frac{\lambda}{\mu}\int_a^{x_i(B)} h(x_i(B)-y+a)  e^{-\lambda (y-a)} dy\nonumber\\
&&\ \ \ \ \ +\frac{1}{\mu}\Big[ h(x_i(B)-y+a)e^{-\lambda (y-a)}\mid_a^{x_i(B)}+\lambda \int_a^{x_i(B)}
h(x_i(B)-y+a)e^{-\lambda (y-a)} dy \Big]\nonumber\\
&&\ \ \ =A-\frac{1}{\mu}h(x_i(B)), \label{eq:g(x_i)}
\end{eqnarray*}
thus proving (\ref{eq:gh}).
\end{proof}
In the following lemma, we set for each $B\in (0,\infty)$,
\begin{equation}
  \label{eq:AunderlineB}
\underline{A}(B)=\ell-g_{0, B}(x_2(B))= h(x_2(B))/\mu+\ell.
\end{equation}
For any $B\in (0,\infty)$,  following (\ref{eq:gABg0B}), we
have
\begin{equation}
  \label{eq:gbiggerell}
  g_{A, B}(x_2(B))\ge  \ell
\end{equation}
for any $A \ge \underline{A}(B)$. Similarly, for any
$B\in(0, \overline{B})$, we define
\begin{equation}
  \label{eq:AoverlineB}
\overline{A}(B)=-k-g_{0, B}(x_1(B))=h(x_1(B))/\mu-k.
\end{equation}
Following (\ref{eq:gABg0B}), we
have
\begin{equation}
  \label{eq:gless-k}
  g_{A, B}(x_1(B))\le -k
\end{equation}
for any $A\le \overline{A}(B)$. Our next lemma determines when
$\underline{A}(B)< \overline{A}(B)$.

\begin{lemma} \label{lem:B1}
 For each $B\in(0, \overline{B})$,  let
 \begin{equation}
   \label{eq:tildeg}
   \tilde g(B)=g_{A, B}(x_2(B))-g_{A, B}(x_1(B))
 \end{equation}
 be the distance between the
local maximum and the local minimum. Then $\tilde g(B)$ is independent
of $A$. The function $\tilde g(B)$  is continuous and  strictly
increasing in
$B\in(0, \overline{B})$ with
\begin{eqnarray}
 \lim_{B\downarrow 0}\widetilde{g}(B)=0\quad \text{ and } \quad
 \lim_{B\uparrow \overline{B}}\widetilde{g}(B)=+\infty. \label{eq:tildeg0infinity}
\end{eqnarray}
Thus, there exists a unique
$\underline{B}_1\in(0, \overline{B})$ such that
\begin{equation}
  \label{eq:B1}
  \tilde g(\underline{B}_1) = k +\ell.
\end{equation}
For each $B\in (\underline{B}_1, \overline{B})$,
\begin{equation}
  \label{eq:underlineAlessoverlineA}
  \underline{A}(B) < \overline{A}(B).
\end{equation}
\end{lemma}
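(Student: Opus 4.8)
The plan is to reduce the entire lemma to one closed-form expression for $\widetilde g(B)$. Combining the shift identity \eqref{eq:gABg0B} with the evaluation formula \eqref{eq:gh}, I get
\[
  \widetilde g(B)=g_{A,B}(x_2(B))-g_{A,B}(x_1(B))=\frac{1}{\mu}\bigl(h(x_1(B))-h(x_2(B))\bigr).
\]
Since neither $x_1(B)$ nor $x_2(B)$ depends on $A$ (Lemma~\ref{thm:optimalParameter}(a)), this already shows that $\widetilde g(B)$ is independent of $A$; and since $x_1(\cdot),x_2(\cdot)$ are continuous (Lemma~\ref{lem:x_i}(a)) and $h$ is continuous, $\widetilde g$ is continuous on $(0,\overline B)$.

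For strict monotonicity I would differentiate. For every $B\in(0,\overline B)$ we have $x_1(B)<a<x_2(B)$ by Lemma~\ref{thm:optimalParameter}, so $h$ is $C^2$ near each $x_i(B)$, and $x_i(\cdot)$ is $C^1$ by Lemma~\ref{lem:x_i}(a); hence $\widetilde g$ is differentiable with $\widetilde g'(B)=\frac{1}{\mu}\bigl(h'(x_1(B))x_1'(B)-h'(x_2(B))x_2'(B)\bigr)$. Substituting the expressions for $x_1'(B)$ and $x_2'(B)$ from Lemma~\ref{lem:x_i}(a), the factors $h'(x_i(B))$ cancel and this collapses to
\[
  \widetilde g'(B)=e^{\lambda(a-x_1(B))}-e^{\lambda(a-x_2(B))}.
\]
Since $\lambda>0$ in the case $\mu>0$ treated here and $x_1(B)<x_2(B)$, the first exponent exceeds the second, so $\widetilde g'(B)>0$ and $\widetilde g$ is strictly increasing (in particular $\widetilde g>0$ on $(0,\overline B)$).

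Next the two limits. As $B\downarrow 0$, \eqref{eq:x1Blimit} gives $x_i(B)\to a$, so $h(x_i(B))\to h(a)=0$ and $\widetilde g(B)\to 0$. As $B\uparrow\overline B$, \eqref{eq:x2Blimit} gives $x_2(B)\to x_2(\overline B)\in(a,\infty)$, so $h(x_2(B))$ stays bounded, while $x_1(B)\to-\infty$; convexity of $h$ together with $h'(x)<0$ on $(-\infty,a)$ in Assumption~\ref{assumption:h} makes $h'$ nondecreasing and strictly negative there, hence $h(x)\to+\infty$ as $x\to-\infty$, so $\widetilde g(B)\to+\infty$. With $\widetilde g$ continuous, strictly increasing, and sweeping out all of $(0,\infty)$, the Intermediate Value Theorem yields a unique $\underline B_1\in(0,\overline B)$ with $\widetilde g(\underline B_1)=k+\ell$, which is \eqref{eq:B1}.

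Finally, from \eqref{eq:AunderlineB} and \eqref{eq:AoverlineB},
\[
  \overline A(B)-\underline A(B)=\frac{h(x_1(B))}{\mu}-k-\Bigl(\frac{h(x_2(B))}{\mu}+\ell\Bigr)=\widetilde g(B)-(k+\ell),
\]
which is strictly positive for $B\in(\underline B_1,\overline B)$ by the strict monotonicity of $\widetilde g$, giving \eqref{eq:underlineAlessoverlineA}. I expect no real obstacle here: once the closed form for $\widetilde g(B)$ is in hand, every assertion is a one-line consequence. The only points that require a moment of care are confirming that $\widetilde g$ is genuinely differentiable — which needs $x_i(B)\ne a$, guaranteed by Lemma~\ref{thm:optimalParameter} — and the blow-up $h(x_1(B))\to+\infty$, which uses only convexity and the sign of $h'$ to the left of $a$.
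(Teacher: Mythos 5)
Your proof is correct and follows essentially the same route as the paper: both establish independence of $A$ via \eqref{eq:gABg0B} together with \eqref{eq:gh}, differentiate to obtain $\widetilde g'(B)=e^{-\lambda(x_1(B)-a)}-e^{-\lambda(x_2(B)-a)}>0$, invoke \eqref{eq:x1Blimit} and \eqref{eq:x2Blimit} for the two limits, and conclude by the intermediate value theorem and the identity $\overline A(B)-\underline A(B)=\widetilde g(B)-(k+\ell)$. The only genuine refinement you add is spelling out why $h(x_1(B))\to+\infty$ as $x_1(B)\to-\infty$ (convexity makes $h'$ nondecreasing and strictly negative on $(-\infty,a)$, giving linear growth from below); the paper leaves this as ``follows from (\ref{eq:x1Blimit}) and (\ref{eq:x2Blimit})'' without comment, so your version is slightly more complete.
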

\begin{proof}
 By (\ref{eq:gABg0B}), $\tilde g(B)= g_{0,B}(x_2(B))-
g_{0, B}(x_1(B))$. Thus, $\tilde g(B)$ is independent of $A$.
It follows from (\ref{eq:dx_1/dB}) that
for  $B\in(0, \overline{B})$
\begin{eqnarray}
\frac{d \tilde{g}(B)}{d B}
&=&
-\frac{1}{\mu}\Big[h'(x_2(B))\frac{dx_2(B)}{dB}-h'(x_1(B))\frac{dx_1(B)}{dB}\Big]
\nonumber \\
&=&  -e^{-\lambda(x_2(B)-a)}+e^{-\lambda(x_1(B)-a)} \label{eq:d wide(g)/dB} \\
\nonumber
&>&0.\nonumber
\end{eqnarray}
Thus $\tilde g(B)$ is strictly increasing. The limit
(\ref{eq:tildeg0infinity}) follows from (\ref{eq:x1Blimit}) and
(\ref{eq:x2Blimit}). The existence of unique $\underline{B}_1$
satisfying (\ref{eq:B1}) follows from (\ref{eq:tildeg0infinity}),
the continuity and monotonicity of $\tilde g$. Inequality
(\ref{eq:underlineAlessoverlineA}) follows from the definition of
$\underline{B}_1$ and the fact that $
\overline{A}(B)-\underline{A}(B)=\tilde{g}(B)-(\ell+k)$.

\end{proof}
\begin{lemma}
  \label{lem:Uu}
(a) For each $B\in (\underline{B}_1, \overline{B})$ and each $A\in
\bigl(\underline{A}(B), \overline{A}(B)]$, there exist unique $U(A,
B)$ and $u(A,
B)$ with
\begin{equation}
  \label{eq:UABlessuAB}
 x_1(B) < U(A, B) < x_2(B) < u(A, B)
\end{equation}
such that
\begin{eqnarray}
&&  g_{A, B}(U(A, B))=g_{A, B}(u(A, B))=\ell, \label{eq:Uudef} \\
&& g'_{A, B}(U(A,B))>0, \quad g'_{A, B}(u(A,
B))<0, \label{eq:UuDerivative} \\
&& g_{A, B} (x_1(B))\le -k. \label{eq:gx1Bless-k}
\end{eqnarray}
(b) For each fixed $B\in (\underline{B}_1, \overline{B})$, $U(A, B)$  and
$u(A, B)$ are continuous differentiable function in
$A\in \bigl(\underline{A}(B), \overline{A}(B)\bigr)$. The function $U(A,B)$ is
decreasing in $A$ and the function $u(A, B)$ is increasing in $A$.
\end{lemma}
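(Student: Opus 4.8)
The plan is to read both parts off the shape of $g=g_{A,B}$ recorded in Lemmas~\ref{thm:optimalParameter}--\ref{lem:B1}: the intermediate value theorem handles the existence/uniqueness statements in part (a), and the implicit function theorem applied to $g_{A,B}(x)=\ell$ handles the differentiability and monotonicity statements in part (b).

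For part (a), fix $B\in(\underline{B}_1,\overline{B})$ and $A\in\bigl(\underline{A}(B),\overline{A}(B)\bigr]$; this interval is nonempty by (\ref{eq:underlineAlessoverlineA}). From $A>\underline{A}(B)$ together with (\ref{eq:gABg0B}) and (\ref{eq:AunderlineB}) I get the \emph{strict} inequality $g(x_2(B))>\ell$, while $A\le\overline{A}(B)$ with (\ref{eq:gless-k}) gives $g(x_1(B))\le -k$, which is (\ref{eq:gx1Bless-k}). By Lemma~\ref{thm:optimalParameter}(c), $g$ is continuous and strictly increasing on $[x_1(B),x_2(B)]$, and since $g(x_1(B))\le -k<\ell<g(x_2(B))$ there is exactly one $U(A,B)\in(x_1(B),x_2(B))$ with $g(U(A,B))=\ell$; as $g'>0$ throughout that interval, $g'(U(A,B))>0$. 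For $u$, note $g$ is continuous and strictly decreasing on $[x_2(B),\infty)$ with $g(x_2(B))>\ell$; the one nonroutine point is to verify $g_{A,B}(x)\to-\infty$ as $x\to\infty$. Since $h$ is convex with $h'>0$ on $(a,\infty)$ (Assumption~\ref{assumption:h}(a),(c)), $h(x)\to\infty$; by Lemma~\ref{lem:solutionToPoisson}, $\Gamma g+h'=0$, so $\tfrac12\sigma^2 g'(x)+\mu g(x)+h(x)$ is constant in $x$, whence if $g$ were bounded below on $(x_2(B),\infty)$ then $g'(x)\to-\infty$, contradicting boundedness; being monotone there, $g$ must tend to $-\infty$. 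Thus there is exactly one $u(A,B)\in(x_2(B),\infty)$ with $g(u(A,B))=\ell$, and $g'(u(A,B))<0$ since $g'<0$ on that interval. Uniqueness of $U(A,B)$ and $u(A,B)$ subject to (\ref{eq:UABlessuAB})--(\ref{eq:UuDerivative}) then follows from the strict monotonicity of $g$ on each of the two intervals.

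For part (b), fix $B\in(\underline{B}_1,\overline{B})$ and let $A$ range over the open interval $\bigl(\underline{A}(B),\overline{A}(B)\bigr)$. Apply the implicit function theorem to $g_{A,B}(x)=\ell$: by (\ref{eq:gABg0B}), $\partial g_{A,B}(x)/\partial A\equiv 1$, and the $x$-derivative is $g'_{A,B}(x)$, which by part (a) is strictly positive at $x=U(A,B)$ and strictly negative at $x=u(A,B)$. Because the intervals $(x_1(B),x_2(B))$ and $(x_2(B),\infty)$ do not depend on $A$, part (a) supplies the hypotheses of the implicit function theorem at every such $A$, and its local branches agree with $U(\cdot,B)$ and $u(\cdot,B)$ by the uniqueness in part (a); hence both are $C^1$ on $\bigl(\underline{A}(B),\overline{A}(B)\bigr)$ with
\[
\frac{\partial U(A,B)}{\partial A}=-\frac{1}{g'_{A,B}(U(A,B))}<0,
\qquad
\frac{\partial u(A,B)}{\partial A}=-\frac{1}{g'_{A,B}(u(A,B))}>0,
\]
so $U(\cdot,B)$ is strictly decreasing and $u(\cdot,B)$ strictly increasing. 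Given Lemmas~\ref{thm:optimalParameter}--\ref{lem:B1}, no step here is deep; the main (and only) point that is not a direct citation is the asymptotics $g_{A,B}(x)\to-\infty$ as $x\to\infty$ used to produce $u(A,B)$, which as noted reduces to $h(x)\to\infty$ and the first‑order relation obtained by integrating (\ref{eq:gPoisson}).
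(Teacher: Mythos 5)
Your proof is correct and takes essentially the same route as the paper: the intermediate value theorem on the two monotone pieces of $g$ separated by $x_2(B)$ for part (a), and the implicit function theorem applied to $g_{A,B}(x)=\ell$ (with $\partial g_{A,B}/\partial A\equiv 1$) for part (b). The one place you go beyond the paper is the argument that $g_{A,B}(x)\to-\infty$ as $x\to\infty$, needed to produce $u(A,B)$ on $(x_2(B),\infty)$; the paper simply asserts existence of $u(A,B)$ from $g(x_2(B))>\ell$ without addressing this tail behavior, so your extra step (derived by integrating $\Gamma g+h'=0$ once to get $\tfrac12\sigma^2 g'+\mu g+h\equiv\text{const.}$ and using $h(x)\to\infty$) genuinely closes a gap rather than merely padding the exposition.
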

\begin{proof}
  (a) For each $B\in (\underline{B}_1, \overline{B})$ and each $A\in
  \bigl(\underline{A}(B), \overline{A}(B)]$, we have $g_{A,
    B}(x_2(B))>\ell$ and $g_{A,B}(x_1(B))\leq -k$. Thus, there are unique
  $U(A, B)$ and $u(A, B)$ that satisfy
  (\ref{eq:Uudef})-(\ref{eq:UuDerivative}).  When
  $A\in\bigl(\underline{A}(B), \overline{A}(B))$, the inequality
  (\ref{eq:gx1Bless-k}) holds.  This inequality implies that $U(A,
  B)>x_1(B)$, which in turn implies that inequality
  (\ref{eq:UABlessuAB}) holds.

(b) Using the \emph{Implicit Function Theorem}, we have
\begin{eqnarray*}
&&  \frac{\partial}{\partial A} U(A, B) = - \frac{1}{g'_{A, B}(U(A, B))}
  <0, \\
&&  \frac{\partial}{\partial A} u(A, B) = - \frac{1}{g'_{A, B}(u(A, B))}
  >0.
\end{eqnarray*}
This proves part (b) of the lemma.
\end{proof}

Fix a $B\in (\underline{B}_1, \overline{B})$. For $A\in
\bigl(\underline{A}(B), \overline{A}(B)\bigr)$ let
\begin{equation}
  \label{eq:Lambda_2AB}
  \Lambda_2(A, B) = \int_{u(A, B)}^{U(A, B)} \bigl[ g_{A, B}(x) - \ell\bigr] dx.
\end{equation}
We would like to show that there exists a unique $A^*(B)\in
(\underline{A}(B),\overline{A}(B)) $ such that
\begin{equation}
  \label{eq:Lambda2AstarEqualL}
\Lambda_2(A^*(B),B)=L.
\end{equation}
\begin{lemma}\label{lem:Lambda2continuity}
Fix a $B\in (\underline{B}_1, \overline{B})$. The function $\Lambda_2(A,
B)$ is continuous and strictly increasing in
$A\in \bigl(\underline{A}(B), \overline{A}(B)\bigr)$. Furthermore
\begin{equation}
  \label{eq:Lambda2underlineAB0}
\lim_{A\downarrow \underline{A}(B) }  \Lambda_2(A, B) =0.
\end{equation}
\end{lemma}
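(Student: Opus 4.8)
The plan is to read \eqref{eq:Lambda_2AB} with its limits in increasing order, i.e. as $\Lambda_2(A,B)=\int_{U(A,B)}^{u(A,B)}[g_{A,B}(x)-\ell]dx$ (the quantity that appears in \eqref{eq:gL}), and to extract all three assertions from the Leibniz rule for differentiation under the integral sign, relying on Lemma~\ref{lem:Uu}(b) for the $C^1$ dependence of the endpoints $U(A,B)$ and $u(A,B)$ on $A$. Throughout, $B\in(\underline B_1,\overline B)$ is fixed. Note first that, since $x_1(B)<U(A,B)<x_2(B)<u(A,B)$ by \eqref{eq:UABlessuAB} and $g_{A,B}$ is increasing on $(x_1(B),x_2(B))$ and decreasing on $(x_2(B),\infty)$ by Lemma~\ref{thm:optimalParameter}(c), while $g_{A,B}(U(A,B))=g_{A,B}(u(A,B))=\ell$ by \eqref{eq:Uudef}, we have $g_{A,B}(x)\ge\ell$ for all $x\in[U(A,B),u(A,B)]$, so $\Lambda_2(A,B)\ge0$.

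Continuity on $(\underline A(B),\overline A(B))$ is essentially automatic: by \eqref{eq:gABg0B} the integrand $(A,x)\mapsto g_{A,B}(x)-\ell=A+g_{0,B}(x)-\ell$ is jointly continuous (indeed $g_{0,B}\in C^1(\R)$ by \eqref{eq:g'}), and the endpoints $A\mapsto U(A,B)$, $A\mapsto u(A,B)$ are continuous by Lemma~\ref{lem:Uu}(b), so $\Lambda_2(\cdot,B)$ is continuous (in fact $C^1$). For strict monotonicity I would differentiate:
\begin{align*}
  \frac{\partial}{\partial A}\Lambda_2(A,B)
  &=\bigl[g_{A,B}(u(A,B))-\ell\bigr]\frac{\partial u(A,B)}{\partial A}
   -\bigl[g_{A,B}(U(A,B))-\ell\bigr]\frac{\partial U(A,B)}{\partial A}\\
  &\quad{}+\int_{U(A,B)}^{u(A,B)}\frac{\partial g_{A,B}(x)}{\partial A}dx .
\end{align*}
The two boundary terms vanish identically because $g_{A,B}(U(A,B))=g_{A,B}(u(A,B))=\ell$, and $\partial g_{A,B}(x)/\partial A\equiv1$ by \eqref{eq:gABg0B}, so $\frac{\partial}{\partial A}\Lambda_2(A,B)=u(A,B)-U(A,B)>0$ by \eqref{eq:UABlessuAB}; hence $\Lambda_2(\cdot,B)$ is strictly increasing. (Without differentiation: passing from $A$ to $A'>A$ raises $g_{A,B}$ pointwise by $A'-A$ and, by Lemma~\ref{lem:Uu}(b), enlarges the interval to $[U(A',B),u(A',B)]\supsetneq[U(A,B),u(A,B)]$, on all of which $g_{A',B}-\ell\ge0$ by the same shape argument; both effects strictly increase the integral.)

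For the limit \eqref{eq:Lambda2underlineAB0} I would use a crude bound. On $[U(A,B),u(A,B)]$ the function $g_{A,B}$ attains its maximum at $x_2(B)$ (it increases up to $x_2(B)$ and decreases afterwards, by Lemma~\ref{thm:optimalParameter}(c)), and by \eqref{eq:gABg0B} together with the definition \eqref{eq:AunderlineB} of $\underline A(B)$ one has $g_{A,B}(x_2(B))=A+g_{0,B}(x_2(B))=\ell+(A-\underline A(B))$; hence $0\le g_{A,B}(x)-\ell\le A-\underline A(B)$ on $[U(A,B),u(A,B)]$. Moreover, by Lemma~\ref{lem:Uu}(b), $u(A,B)$ is increasing and $U(A,B)$ decreasing in $A$, so $u(A,B)-U(A,B)$ is nondecreasing in $A$; fixing any $A_1\in(\underline A(B),\overline A(B))$, it is bounded by $u(A_1,B)-U(A_1,B)<\infty$ for $A\le A_1$. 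Therefore
\[
  0\le\Lambda_2(A,B)\le\bigl(u(A_1,B)-U(A_1,B)\bigr)\bigl(A-\underline A(B)\bigr)\longrightarrow0
  \quad\text{as }A\downarrow\underline A(B),
\]
which is \eqref{eq:Lambda2underlineAB0}. I do not expect a genuine obstacle here: the only two points deserving a word are the legitimacy of differentiating under the integral (supplied by the $C^1$ dependence of $U(\cdot,B),u(\cdot,B)$ in Lemma~\ref{lem:Uu}(b)) and the vanishing of the boundary terms (immediate from \eqref{eq:Uudef}); all the shape information invoked is exactly Lemma~\ref{thm:optimalParameter}(c), already established.
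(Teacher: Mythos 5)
Your proof is correct, and on the core point — strict monotonicity via $\partial_A\Lambda_2(A,B)=u(A,B)-U(A,B)>0$, with boundary terms killed by $g_{A,B}(U(A,B))=g_{A,B}(u(A,B))=\ell$ and $\partial_A g_{A,B}\equiv 1$ — it coincides exactly with the paper's. The only genuine divergence is in the limit \eqref{eq:Lambda2underlineAB0}: the paper shows the integration \emph{interval} collapses, arguing from $g_{A,B}(x_2(B))=A-h(x_2(B))/\mu\to\ell$ that $U(A,B)\to x_2(B)$ and $u(A,B)\to x_2(B)$ as $A\downarrow\underline A(B)$, so $\Lambda_2$ is an integral over a shrinking interval of a bounded integrand. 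You instead keep the interval length bounded (by monotonicity of $U(\cdot,B),u(\cdot,B)$) and drive the \emph{integrand} to zero uniformly via $0\le g_{A,B}(x)-\ell\le g_{A,B}(x_2(B))-\ell=A-\underline A(B)$. Both are sound; your version has the mild advantage of not needing the endpoint-convergence step $U(A,B),u(A,B)\to x_2(B)$ (only their monotonicity), while the paper's is perhaps more geometrically transparent about \emph{why} $\Lambda_2$ vanishes (the band $[U,u]$ degenerates to the single point $x_2(B)$).
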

\begin{proof}
By the {\em Implicit Function Theorem}, we have
\begin{eqnarray}
\frac{\partial \Lambda_2 (A,B)}{\partial A}&=&
\frac{\partial u(A,B)}{\partial A} \bigl[g_{A,B}(u(A,B)-l)\bigr]
-\frac{\partial U(A,B)}{\partial A} \bigl[g_{A,B}(U(A,B)-l)\bigr]\nonumber\\
&&{}+\int_{U(A,B)}^{u(A,B)} 1 dx\nonumber\\
&=&u(A,B)-U(A,B)\nonumber\\
&>&0.\label{eq:d Lambda_1/dA}
\end{eqnarray}
Therefore $\Lambda_2(A,B)$ is strictly increasing in
$A\in(\underline{A}(B),\overline{A}(B))$.

Observe that
\begin{displaymath}
\lim_{A\downarrow \underline{A}(B)}g_{A,B}(x_2(B))=\lim_{A\downarrow
    \underline{A}(B)}\bigl[A-\frac{1}{\mu}h(x_2(B))\bigr]=l.
\end{displaymath}
By the definitions of $U(A,B)$ and $u(A,B)$, we have
\begin{displaymath}
\lim_{A\downarrow \underline{A}(B)}U(A,B)=\lim_{A\downarrow
\underline{A}(B)}u(A,B)=x_2(B),
\end{displaymath}
which proves (\ref{eq:Lambda2underlineAB0}).

\end{proof}

\begin{lemma} \label{lem:B1B2}
The function $\Lambda_2(\overline{A}(B), B)$ is continuous and
strictly increasing in  $B\in (\underline{B}_1, \overline{B})$.
Furthermore,
\begin{equation}
  \label{eq:1}
\lim_{B\downarrow \underline{B}_1}  \Lambda_2(\overline{A}(B), B)=0
\quad\text{and} \quad
\lim_{B\uparrow \overline{B}}  \Lambda_2(\overline{A}(B), B)=\infty.
\end{equation}
Therefore,
 there exists a unique
$\underline{B}_2\in (\underline{B}_1, \overline{B})$ such that
\begin{equation}
  \label{eq:B2}
\Lambda_2(\overline{A}(\underline{B}_2), \underline{B}_2) = L \quad \text{and}
\quad
\Lambda_2(\overline{A}(B),B) > L \quad \text{for }
B\in (\underline{B}_2, \overline{B}).
\end{equation}
\end{lemma}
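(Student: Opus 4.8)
The plan is to treat $B\mapsto\Lambda_2(\overline{A}(B),B)$ as a composition, differentiate it with the chain rule, and resolve a sign competition; the two limits are then comparatively soft. First I would record the identity $\overline{A}'(B)=e^{-\lambda(x_1(B)-a)}$, obtained by differentiating $\overline{A}(B)=h(x_1(B))/\mu-k$ (see \eqref{eq:AoverlineB}) and substituting $x_1'(B)$ from \eqref{eq:dx_1/dB}; it is strictly positive (in fact $>1$) since $x_1(B)<a$ and $\lambda>0$. Next, differentiating $\Lambda_2(A,B)=\int_{U(A,B)}^{u(A,B)}[g_{A,B}(x)-\ell]\,dx$ in $B$ at fixed $A$: the Leibniz boundary terms vanish because $g_{A,B}=\ell$ at $U(A,B)$ and at $u(A,B)$ (with $U,u$ differentiable in $B$ by the Implicit Function Theorem, as in Lemma~\ref{lem:Uu}), and since $\partial g_{A,B}/\partial B=-e^{-\lambda(x-a)}$ by \eqref{eq:g1}, we get $\partial\Lambda_2/\partial B=-\int_{U(A,B)}^{u(A,B)}e^{-\lambda(x-a)}\,dx<0$. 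Combined with $\partial\Lambda_2/\partial A=u(A,B)-U(A,B)$ from the proof of Lemma~\ref{lem:Lambda2continuity}, the chain rule gives
\begin{equation*}
  \frac{d}{dB}\,\Lambda_2(\overline{A}(B),B)=\bigl(u-U\bigr)e^{-\lambda(x_1(B)-a)}-\int_{U}^{u}e^{-\lambda(x-a)}\,dx ,
\end{equation*}
with $U=U(\overline{A}(B),B)$ and $u=u(\overline{A}(B),B)$.

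Showing this quantity is positive is the crux. By the inclusion \eqref{eq:UABlessuAB} we have $x_1(B)<U<u$, so the decreasing weight obeys $e^{-\lambda(x-a)}<e^{-\lambda(x_1(B)-a)}$ for every $x\in(U,u)$; hence $\int_U^u e^{-\lambda(x-a)}\,dx<(u-U)e^{-\lambda(x_1(B)-a)}$ and the derivative is strictly positive. In words: the ``loss'' coming from the explicit $B$-dependence of $g_{A,B}$ is dominated by the ``gain'' coming from the forced increase of $\overline{A}$, precisely because the local minimizer $x_1(B)$ lies strictly to the left of the whole band $(U,u)$, where the exponential weight is largest. Continuity of $B\mapsto\Lambda_2(\overline{A}(B),B)$ on $(\underline{B}_1,\overline{B})$ is then routine: $x_1(B)$, hence $\overline{A}(B)$, is continuous (Lemma~\ref{lem:x_i}), and $\Lambda_2$ is jointly continuous in $(A,B)$ since $g_{A,B}$, $U(A,B)$, $u(A,B)$ are (the last two via the Implicit Function Theorem, as in Lemma~\ref{lem:Uu}).

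For the limit as $B\downarrow\underline{B}_1$, the defining relation $\widetilde{g}(\underline{B}_1)=k+\ell$ together with $\overline{A}(B)-\underline{A}(B)=\widetilde{g}(B)-(k+\ell)$ (Lemma~\ref{lem:B1}) gives $g_{\overline{A}(B),B}(x_2(B))=\overline{A}(B)-\underline{A}(B)+\ell\to\ell$, which squeezes $U(\overline{A}(B),B)$ and $u(\overline{A}(B),B)$ together onto $x_2(\underline{B}_1)$, so $\Lambda_2(\overline{A}(B),B)\to 0$; alternatively, combine $\lim_{A\downarrow\underline{A}(B)}\Lambda_2(A,B)=0$ from Lemma~\ref{lem:Lambda2continuity} with joint continuity.

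For the limit as $B\uparrow\overline{B}$: by \eqref{eq:x2Blimit}, $x_1(B)\to-\infty$, so $h(x_1(B))\to\infty$ (Assumption~\ref{assumption:h}(a),(c) force $h$ to grow at least linearly at $-\infty$, cf.\ \eqref{eq:hDerivatie}), and hence $\overline{A}(B)=h(x_1(B))/\mu-k\to\infty$; meanwhile $x_2(B)\to x_2(\overline{B})<\infty$, so $\underline{A}(B)=h(x_2(B))/\mu+\ell$ stays bounded and the peak value $g_{\overline{A}(B),B}(x_2(B))=\overline{A}(B)-\underline{A}(B)+\ell\to\infty$. Since $g_{0,B}(x)\le -\frac{\lambda}{\mu}\int_a^x h(x-y+a)e^{-\lambda(y-a)}\,dy\to-\infty$ as $x\to\infty$, uniformly in $B\in(0,\overline{B})$, the rightmost $\ell$-crossing $u(\overline{A}(B),B)$ (which solves $g_{0,B}(\cdot)=\ell-\overline{A}(B)$ on the decreasing branch) tends to $+\infty$, while $U(\overline{A}(B),B)<x_2(B)\le x_2(\overline{B})$ remains bounded; since $g_{\overline{A}(B),B}$ is unimodal on $(U,u)$ with peak value blowing up and an endpoint escaping to $+\infty$, the layer-cake identity $\Lambda_2=\int_\ell^{g_{\overline{A}(B),B}(x_2(B))}\bigl|\{x\in(U,u):g_{\overline{A}(B),B}(x)>s\}\bigr|\,ds$ forces $\Lambda_2(\overline{A}(B),B)\to\infty$. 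Finally, strict monotonicity, continuity and these two limits yield, via the intermediate value theorem, a unique $\underline{B}_2\in(\underline{B}_1,\overline{B})$ with $\Lambda_2(\overline{A}(\underline{B}_2),\underline{B}_2)=L$, and strict monotonicity immediately gives $\Lambda_2(\overline{A}(B),B)>L$ for all $B\in(\underline{B}_2,\overline{B})$.
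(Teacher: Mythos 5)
Your derivative computation and the first limit are exactly the paper's argument: you obtain $\overline{A}'(B)=e^{-\lambda(x_1(B)-a)}$ from \eqref{eq:AoverlineB} and \eqref{eq:dx_1/dB}, combine it with $\partial\Lambda_2/\partial A=u-U$ and $\partial\Lambda_2/\partial B=-\int_U^u e^{-\lambda(x-a)}dx$ to get $\frac{d}{dB}\Lambda_2(\overline{A}(B),B)=\int_U^u\bigl[e^{-\lambda(x_1(B)-a)}-e^{-\lambda(x-a)}\bigr]dx$, and close the sign by the inclusion $x_1(B)<U<u$ from \eqref{eq:UABlessuAB}. The squeezing argument for $B\downarrow\underline{B}_1$ via $\overline{A}(\underline{B}_1)=\underline{A}(\underline{B}_1)$ is also the paper's. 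These parts are correct and essentially identical.

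The one genuine gap is in the limit $B\uparrow\overline{B}$. You observe, correctly, that $\overline{A}(B)\to\infty$, that the peak $g_{\overline{A}(B),B}(x_2(B))\to\infty$, that $U(\overline{A}(B),B)$ stays bounded, and that $u(\overline{A}(B),B)\to\infty$; but the final inference ``unimodal, peak blows up, endpoint escapes, hence the layer-cake integral diverges'' is not a valid implication. A unimodal bump can have arbitrarily large peak and arbitrarily wide base while its integral stays bounded (narrow spike plus a long thin tail near level $\ell$), so something more quantitative is needed. The additional structure you should exploit is precisely the additive decomposition $g_{\overline{A}(B),B}(x)=\overline{A}(B)+g_{0,B}(x)$, which you already used elsewhere: fix any $B_0\in(\underline{B}_1,\overline{B})$, so $[U(\overline{A}(B_0),B_0),\,u(\overline{A}(B_0),B_0)]$ is a nondegenerate interval contained in $[U(\overline{A}(B),B),\,u(\overline{A}(B),B)]$ for all $B\ge B_0$ by the monotonicity in \eqref{eq:dU/dA}--\eqref{eq:du/dA}, and since $g_{\overline{A}(B),B}-\ell\ge 0$ on the larger interval,
\begin{displaymath}
\Lambda_2(\overline{A}(B),B)\ \ge\ \int_{U(\overline{A}(B_0),B_0)}^{u(\overline{A}(B_0),B_0)}\bigl[\overline{A}(B)+g_{0,B}(x)-\ell\bigr]\,dx\ \longrightarrow\ \infty ,
\end{displaymath}
because $g_{0,B}\to g_{0,\overline{B}}$ uniformly on this fixed compact interval while $\overline{A}(B)\to\infty$. (For what it is worth, the paper's own display at this point takes the reference band at $\underline{B}_1$ itself, where $U=u=x_2(\underline{B}_1)$, so the reference interval there is degenerate; the intended argument clearly uses a $B_0$ strictly inside $(\underline{B}_1,\overline{B})$, as above.) With that repair your proof is complete and follows the same route as the paper.
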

\begin{proof}

  We first check that $\Lambda_2(\overline{A}(B), B)$ is strictly increasing in $B\in (\underline{B}_1, \overline{B})$.
  To see this, it follows from
 (\ref{eq:dx_1/dB})  and  the definition of
 $\overline{A}(B)$ in (\ref{eq:AoverlineB}) that
 \begin{equation}
\frac{d\overline{A}(B)}{dB}=\frac{1}{\mu}h'(x_1(B))\frac{dx_1(B)}{dB}
=e^{-\lambda(x_1(B)-a)},\label{eq:d(A)/dB}
 \end{equation}
which implies
\begin{eqnarray}
&&\frac{d \Lambda_2(\overline{A}(B),B)}{d B}  \label{eq:dLamma/da}\\
&&\ \ \ =\frac{\partial u(\overline{A}(B),B)}{\partial B} [g_{\overline{A}(B),B}(u(A,B)-l)]
+\frac{\partial u(\overline{A}(B),B)}{\partial A}\frac{d\overline{A}(B)}{dB} [g_{\overline{A}(B),B}(u(A,B)-l)]\nonumber\\
&&\ \ \ \ \ -\frac{\partial U(\overline{A}(B),B)}{\partial B} [g_{\overline{A}(B),B}(U(A,B)-l)]
-\frac{\partial U(\overline{A}(B),B)}{\partial A} \frac{d\overline{A}(B)}{dB}[g_{\overline{A}(B),B}(U(A,B)-l)]\nonumber\\
&&\ \ \ \ \ +\int_{U(A,B)}^{u(A,B)} [-e^{-\lambda(x-a)}+\frac{d\overline{A}(B)}{dB}] dx\nonumber\\
&&\ \ \ =\int_{U(A,B)}^{u(A,B)} [-e^{-\lambda(x-a)}+e^{-\lambda(x_1(B)-a)}] dx\nonumber\\
&&\ \ \ >0,\nonumber
\end{eqnarray}
where the last inequality is due to $x_1(B)<U(A,B)<u(A,B)$ for $B\in(\underline{B}_1,\overline{B})$.
Thus, we have proved that
$\Lambda_2(\overline{A}(B), B)$ is strictly increasing in
$B\in (\underline{B}_1, \overline{B})$.

Because $\tilde g(\underline{B}_1)=k+\ell$, we have
\begin{displaymath}
\underline{A}(\underline{B}_1)=  g_{\underline{A}(\underline{B}_1),
  \underline{B}_1}(x_2(\underline{B}_1))-\ell =
  g_{\overline{A}(\underline{B}_1),
    \underline{B}_1}(x_1(\underline{B}_1))+k=\overline{A}(\underline{B}_1).
\end{displaymath}
Thus,
\begin{displaymath}
\lim_{B\downarrow \underline{B}_1}U(\overline{A}(B),B)=\lim_{B\downarrow
  \underline{B}_1}u(\overline{A}(B),B)=x_2(\underline{B}_1).
\end{displaymath}
It follows that
\begin{equation}
  \label{eq:Lambda1zero}
\lim_{B\downarrow\underline{B}_1}\Lambda_1(\overline{A}(B),B)=0.
\end{equation}
 We now show that
 \begin{equation}
   \label{eq:Lambda2barAinfinty}
\lim_{B\uparrow\overline{B}}\Lambda_1(\overline{A}(B),B)=\infty.
 \end{equation}
It is clear that (\ref{eq:Lambda1zero}),
(\ref{eq:Lambda2barAinfinty}) and the monotonicity imply the existence of a unique
$\underline{B}_2\in (\underline{B}_1, \overline{B})$ that satisfies
(\ref{eq:B2}).

To prove (\ref{eq:Lambda2barAinfinty}), one can check that
\begin{eqnarray}
\frac{d U(\overline{A}(B),B)}{d B}
&=&\frac{e^{-\lambda(U(\overline{A}(B),B)-a)}-e^{-\lambda(x_1(B))-a)}}{g_{\overline{A}(B),B}'(U(\overline{A}(B),B))}<0,\label{eq:dU/dA}\\
\frac{d u(\overline{A}(B),B)}{d B}
&=&\frac{e^{-\lambda(u(\overline{A}(B),B)-a)}-e^{-\lambda(x_1(B))-a)}}{g_{\overline{A}(B),B}'(u(\overline{A}(B)))}>0.\label{eq:du/dA}
\end{eqnarray}
Therefore, $U(\overline{A}(B), B)$ decreases in $B$ and $u(\overline{A}(B), B)$
increases in $B$. Thus,
\begin{displaymath}
  U(\overline{A}(B), B)\le U(\overline{A}(\underline{B}_1), \underline{B}_1)  \quad
  \text{and} \quad
  u(\overline{A}(B), B)\ge u(\overline{A}(\underline{B}_1), \underline{B}_1)
\end{displaymath}
as $B\uparrow \overline{B}$. Therefore,
noting that $g_{\overline{A}(B),B}(x)-l\geq0$ for $x\in(U(\overline{A}(B), B),u(\overline{A}(B), B))$, we have
\begin{eqnarray*}
\lim_{B\uparrow\overline{B}}\Lambda_2(\overline{A}(B),B)
&=&\lim_{B\uparrow\overline{B}}\int_{U(\overline{A}(B),B)}^{u(\overline{A}(B),B)}\Big[g_{\overline{A}(B),B}(x)-l\Big]dx
\\
&\ge&\lim_{B\uparrow\overline{B}}\int_{U(\overline{A}(\underline{B}_1),\underline{B}_1)}^{u(\overline{A}(\underline{B}_1),\underline{B}_1)}
\Big[g_{\overline{A}(B),B}(x)-l\Big]dx
\\
&=&\lim_{B\uparrow\overline{B}}\int_{U(\overline{A}(\underline{B}_1),\underline{B}_1)}^{u(\overline{A}(\underline{B}_1),\underline{B}_1)}\Big
[\overline{A}(B) + g_{0,B}(x)-l\Big]dx
\\
&=&\bigl
(U(\overline{A}(\underline{B}_1),\underline{B}_1) -
u(\overline{A}(\underline{B}_1),\underline{B}_1)\bigr)
\lim_{B\uparrow\overline{B}} \overline{A}(B) \\
&& { }+
\int_{U(\overline{A}(\underline{B}_1),\underline{B}_1)}^{u(\overline{A}(\underline{B}_1),\underline{B}_1)}\Big
[g_{0,\overline{B}}(x)-l\Big]dx
\\
&=& \infty,
\end{eqnarray*}
where we have used the fact that
\begin{displaymath}
  \lim_{B\uparrow\overline{B}} \overline{A}(B)=
  \lim_{B\uparrow\overline{B}} h_1(x_1(B))/\mu -k=\infty.
\end{displaymath}
\end{proof}

Lemma \ref{lem:Lambda2continuity} and the inequality in (\ref{eq:B2})
immediately imply the following
lemma
\begin{lemma}\label{lem:Astar}
For each $B\in [\underline{B}_2, \overline{B})$, there exists a unqiue
$ A^*(B) \in (\underline{A}(B), \overline{A}(B)]$
such that
\begin{equation}
  \label{eq:Astar}
\Lambda_2(A^*(B), B)=L.
\end{equation}

\end{lemma}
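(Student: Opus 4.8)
The plan is to obtain Lemma~\ref{lem:Astar} as essentially a one-step consequence of the intermediate value theorem, using only the monotonicity and boundary behaviour of $\Lambda_2$ already recorded in Lemmas~\ref{lem:Lambda2continuity} and~\ref{lem:B1B2}; no new estimates are needed. Since $\underline{B}_2\in(\underline{B}_1,\overline{B})$ by Lemma~\ref{lem:B1B2}, for every $B\in[\underline{B}_2,\overline{B})$ the interval $(\underline{A}(B),\overline{A}(B)]$ on which $U(A,B)$ and $u(A,B)$ are defined (Lemma~\ref{lem:Uu}(a)) lies inside the range where Lemma~\ref{lem:Lambda2continuity} applies. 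Recall also that $L>0$, since all of Section~\ref{sec:impulse-controls} assumes $K>0$ and $L>0$.

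First I would fix $B\in[\underline{B}_2,\overline{B})$ and assemble the facts about the one-variable map $A\mapsto\Lambda_2(A,B)$. By Lemma~\ref{lem:Lambda2continuity} this map is continuous and strictly increasing on the open interval $(\underline{A}(B),\overline{A}(B))$, with $\lim_{A\downarrow\underline{A}(B)}\Lambda_2(A,B)=0$ by~\eqref{eq:Lambda2underlineAB0}. The one technical point is to push continuity and strict monotonicity all the way to the closed right endpoint $A=\overline{A}(B)$. By Lemma~\ref{lem:Uu}(a) the ordering~\eqref{eq:UABlessuAB} and the transversality conditions~\eqref{eq:UuDerivative} still hold at $A=\overline{A}(B)$, so the implicit-function-theorem description of $U(\cdot,B)$ and $u(\cdot,B)$ used in Lemma~\ref{lem:Uu}(b) remains valid on a one-sided neighbourhood of $\overline{A}(B)$; consequently $\Lambda_2(\cdot,B)$, built from $U(\cdot,B)$, $u(\cdot,B)$ and $g_{\cdot,B}$ via~\eqref{eq:Lambda_2AB}, extends continuously to $A=\overline{A}(B)$, and the derivative identity $\partial_A\Lambda_2=u(\cdot,B)-U(\cdot,B)>0$ computed in the proof of Lemma~\ref{lem:Lambda2continuity} keeps it strictly increasing on $(\underline{A}(B),\overline{A}(B)]$.

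With this in hand I would split into two cases. If $B=\underline{B}_2$, then~\eqref{eq:B2} gives $\Lambda_2(\overline{A}(\underline{B}_2),\underline{B}_2)=L$ directly, so $A^*(B):=\overline{A}(B)$ works, and it is the only solution in $(\underline{A}(B),\overline{A}(B)]$ by strict monotonicity. If $B\in(\underline{B}_2,\overline{B})$, then~\eqref{eq:B2} gives $\Lambda_2(\overline{A}(B),B)>L$, while $\lim_{A\downarrow\underline{A}(B)}\Lambda_2(A,B)=0<L$; by the intermediate value theorem there is an $A^*(B)\in(\underline{A}(B),\overline{A}(B))$ with $\Lambda_2(A^*(B),B)=L$, and it is unique in the whole interval $(\underline{A}(B),\overline{A}(B)]$ because $\Lambda_2(\cdot,B)$ is strictly increasing there. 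Either way $A^*(B)\in(\underline{A}(B),\overline{A}(B)]$ and~\eqref{eq:Astar} holds.

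The only part demanding care --- and hence the ``main obstacle,'' though it is minor --- is the continuity and monotonicity of $\Lambda_2(\cdot,B)$ at the closed endpoint $A=\overline{A}(B)$, since Lemmas~\ref{lem:Lambda2continuity} and~\ref{lem:Uu}(b) are stated on the open interval only; everything else is a textbook use of the intermediate value theorem. If one prefers to bypass this, one can treat $B=\underline{B}_2$ separately as above and, for $B\in(\underline{B}_2,\overline{B})$, integrate the bounded function $\partial_A\Lambda_2=u(\cdot,B)-U(\cdot,B)$ from $A$ up to $\overline{A}(B)$ to conclude $\Lambda_2(A,B)\to\Lambda_2(\overline{A}(B),B)>L$ as $A\uparrow\overline{A}(B)$, and then apply the intermediate value theorem on a compact subinterval of $(\underline{A}(B),\overline{A}(B))$ on which $\Lambda_2$ takes values on both sides of $L$.
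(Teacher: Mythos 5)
Your proof is correct and follows exactly the route the paper intends: the paper itself offers no proof of this lemma, stating only that it is ``immediately implied'' by Lemma~\ref{lem:Lambda2continuity} and the inequality in~\eqref{eq:B2}, which is precisely the intermediate-value-theorem argument you carry out. The one thing you add beyond what the paper records is the explicit check that $\Lambda_2(\cdot,B)$ extends continuously and strictly monotonically to the closed right endpoint $A=\overline{A}(B)$ (needed so that the case $B=\underline{B}_2$ really does produce a solution, and so that uniqueness holds on the half-open interval including the endpoint); this is a genuine if minor gap in the paper's presentation, and both of your ways of handling it --- extending via Lemma~\ref{lem:Uu}(a) and the implicit function theorem, or integrating the bounded derivative $\partial_A\Lambda_2 = u(\cdot,B)-U(\cdot,B)$ up to $\overline{A}(B)$ --- are valid.
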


Finally, we prove the following lemma, which in turn proves Theorem
\ref{lem:optimalParameters}.
\begin{lemma}\label{lem:d D}
There exist unique $B^*\in (\underline{B}_2, \overline{B})$,
$d^*$ and $D^*$ that  satisfy
\begin{eqnarray*}
&&d^* <x_1(B^*)<D^*< U(A^*(B^*), B^*),\\
&&g_{A^*(B^*),B^*}(d^*)=g_{A^*(B^*),B^*}(D^*)=-k,\\
&&g_{A^*(B^*),B^*}'(d^*)<0,\ g_{A^*(B^*),B^*}'(D^*)>0,\\
&&\int_{d^*}^{D^*} [g_{A^*(B^*),B^*}(x)+k]dx=-K.
\end{eqnarray*}
\end{lemma}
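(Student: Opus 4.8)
The plan is to mirror, for the lower boundary pair $(d,D)$, exactly the construction carried out for the upper pair $(U,u)$ in Lemmas~\ref{lem:Uu}--\ref{lem:Astar}, but now with no remaining freedom in $A$: for $B\in[\underline B_2,\overline B)$ we fix $A=A^*(B)$ from Lemma~\ref{lem:Astar}, so that $\Lambda_2(A^*(B),B)=L$ and $\underline A(B)<A^*(B)\le\overline A(B)$, and we use the single remaining parameter $B$ to enforce the area condition $\int_{d^*}^{D^*}[g+k]\,dx=-K$. First I would define the two roots: writing $g=g_{A^*(B),B}$, the bound $A^*(B)\le\overline A(B)$ and \eqref{eq:gless-k} give $g(x_1(B))\le -k$; since $g$ is strictly decreasing on $(-\infty,x_1(B))$ with $g(x)\to+\infty$ as $x\to-\infty$ (Lemma~\ref{thm:optimalParameter}(c) together with $F_1(B,\cdot)\to B-\overline B<0$), and strictly increasing on $[x_1(B),x_2(B)]$ from $g(x_1(B))\le -k$ up past $\ell$, there are unique $d^*(B)\in(-\infty,x_1(B)]$ and $D^*(B)\in[x_1(B),x_2(B))$ with $g(d^*(B))=g(D^*(B))=-k$. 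Strict monotonicity of $g$ on $[x_1(B),x_2(B)]$ together with $-k<\ell$ gives $D^*(B)<U(A^*(B),B)$, and for $B\in(\underline B_2,\overline B)$ the inequality $\Lambda_2(\overline A(B),B)>L=\Lambda_2(A^*(B),B)$ with monotonicity of $\Lambda_2$ in $A$ forces $A^*(B)<\overline A(B)$, hence $g(x_1(B))<-k$ and $d^*(B)<x_1(B)<D^*(B)$ strictly; the sign conditions $g'(d^*(B))<0$, $g'(D^*(B))>0$ are then automatic from Lemma~\ref{thm:optimalParameter}(c).

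Next I would establish $C^1$ dependence on $B$. Applying the Implicit Function Theorem to $\Lambda_2(A^*(B),B)=L$, using $\partial\Lambda_2/\partial A=u(A,B)-U(A,B)>0$ from \eqref{eq:d Lambda_1/dA} and $\partial\Lambda_2/\partial B=-\int_{U(A,B)}^{u(A,B)}e^{-\lambda(x-a)}\,dx$ (the boundary terms vanishing since $g_{A,B}=\ell$ at $U$ and $u$, and $\partial_B g_{A,B}(x)=-e^{-\lambda(x-a)}$), gives
\[
(A^*)'(B)=\frac{\int_{U(A^*(B),B)}^{u(A^*(B),B)}e^{-\lambda(x-a)}\,dx}{u(A^*(B),B)-U(A^*(B),B)}>0 ,
\]
i.e.\ $(A^*)'(B)$ is the average of $x\mapsto e^{-\lambda(x-a)}$ over $[U(A^*(B),B),u(A^*(B),B)]$. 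A second application of the Implicit Function Theorem to $g_{A^*(B),B}(\cdot)=-k$ (using $g'\neq0$ at $d^*(B)$ and $D^*(B)$) shows $d^*(\cdot),D^*(\cdot)\in C^1(\underline B_2,\overline B)$. Then I set
\[
\Lambda_1(B)=\int_{d^*(B)}^{D^*(B)}\bigl[g_{A^*(B),B}(x)+k\bigr]\,dx .
\]
Since on $[d^*(B),D^*(B)]$ the function $g_{A^*(B),B}$ attains its minimum at $x_1(B)$ and $g_{A^*(B),B}\le -k$ there, $\Lambda_1(B)\le0$, with equality exactly when $A^*(B)=\overline A(B)$, i.e.\ when $B=\underline B_2$; thus $\Lambda_1(\underline B_2)=0$.

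For monotonicity I would differentiate $\Lambda_1$: the boundary terms vanish because $g_{A^*(B),B}+k=0$ at $d^*(B)$ and $D^*(B)$, so with $\partial_A g_{A,B}\equiv1$ and $\partial_B g_{A,B}(x)=-e^{-\lambda(x-a)}$,
\[
\Lambda_1'(B)=\int_{d^*(B)}^{D^*(B)}\bigl[(A^*)'(B)-e^{-\lambda(x-a)}\bigr]\,dx .
\]
For $x\in[d^*(B),D^*(B)]$ one has $x\le D^*(B)<U(A^*(B),B)$, so, as $e^{-\lambda(\cdot-a)}$ is strictly decreasing ($\lambda>0$), $e^{-\lambda(x-a)}>e^{-\lambda(U(A^*(B),B)-a)}\ge(A^*)'(B)$, the last step because $(A^*)'(B)$ is an average of $e^{-\lambda(\cdot-a)}$ over points $\ge U(A^*(B),B)$. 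Hence the integrand is strictly negative and $\Lambda_1'(B)<0$ on $(\underline B_2,\overline B)$.

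The hard part will be the limit $\lim_{B\uparrow\overline B}\Lambda_1(B)=-\infty$, which is the analogue here of \eqref{eq:Lambda2barAinfinty}. I would argue in two steps. First, $A^*(B)$ stays bounded as $B\uparrow\overline B$: if $A^*(B_n)\to\infty$ along some $B_n\uparrow\overline B$, then the peak $g_{A^*(B_n),B_n}(x_2(B_n))=A^*(B_n)-h(x_2(B_n))/\mu\to\infty$ while $u(A^*(B_n),B_n)-U(A^*(B_n),B_n)\to\infty$, so $\Lambda_2(A^*(B_n),B_n)\to\infty$, contradicting $\Lambda_2\equiv L$. Second, $\overline A(B)=h(x_1(B))/\mu-k\to\infty$, since $x_1(B)\to-\infty$ (by \eqref{eq:x2Blimit}, which is where Assumption~\ref{assumption:h}(d) is used, via $\overline B<\infty$) and $h(x_1(B))\to\infty$ (Assumption~\ref{assumption:h}(a),(c), as in \eqref{eq:hDerivatie}). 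Therefore $g_{A^*(B),B}(x_1(B))+k=A^*(B)-\overline A(B)\to-\infty$, whereas $g_{A^*(B),B}(D^*(B))=-k$ keeps $D^*(B)$ in a bounded set and $d^*(B)<x_1(B)\to-\infty$; a short estimate then shows $g_{A^*(B),B}+k\le-\delta$ on a sub-interval of $[d^*(B),D^*(B)]$ of length $\to\infty$ (equivalently, substituting $v=g_{A^*(B),B}(x)$ on $[d^*(B),x_1(B)]$ turns $\Lambda_1(B)$ into $\le-\int|v+k|/|g'(x(v))|\,dv$ over $[g(x_1(B)),-k]$, a negative quantity whose modulus diverges because $|g(x_1(B))|\to\infty$ and $1/|g'|\to\infty$ near $x_1(B)$), so $\Lambda_1(B)\to-\infty$. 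Finally, since $\Lambda_1$ is continuous, strictly decreasing on $[\underline B_2,\overline B)$, with $\Lambda_1(\underline B_2)=0$ and $\Lambda_1(B)\to-\infty$, for each $K>0$ there is a unique $B^*\in(\underline B_2,\overline B)$ with $\Lambda_1(B^*)=-K$; putting $d^*=d^*(B^*)$, $D^*=D^*(B^*)$ and invoking the first two paragraphs yields all four displayed relations, and together with Lemmas~\ref{lem:Uu}--\ref{lem:Astar} this proves Theorem~\ref{lem:optimalParameters}.
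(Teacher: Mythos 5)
Your construction of $d^*(B),D^*(B)$, the computation of $(A^*)'(B)$, the sign argument for $\Lambda_1'(B)<0$, and the identification $\Lambda_1(\underline B_2)=0$ all match the paper (Lemma~\ref{lem:d D}) and are correct. Your observation that $A^*(B)$ stays bounded as $B\uparrow\overline B$ is a nice extra fact the paper does not use, and together with $\overline A(B)\to\infty$ it does yield the trough-depth statement $g_{A^*(B),B}(x_1(B))+k\to-\infty$, which the paper instead obtains by showing $\partial_B\,g_{A^*(B),B}(x_1(B))<0$.

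The gap is in the final step, $\lim_{B\uparrow\overline B}\Lambda_1(B)=-\infty$. Knowing that the trough depth diverges and that $d^*(B)\le x_1(B)\to-\infty$ does not by itself give a diverging area: you also need a \emph{lower bound, uniform in $B$, on how far to the right the level set $\{g_{A^*(B),B}\le-k-\delta\}$ extends}, and this is precisely the assertion you wave through with ``keeps $D^*(B)$ in a bounded set'' and ``a short estimate''. Nothing you have established bounds $D^*(B)$ (or the right crossing of $-k-\delta$) from below; and the parenthetical change-of-variables sketch does not close the gap either, because ``$1/|g'|\to\infty$ near $x_1(B)$'' is true at every $B$ and says nothing about the size of the integral without quantitative control of $g''$ near $x_1(B)$. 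The paper handles this by fixing $\underline B_3\in(\underline B_2,\overline B)$, setting $M_1=\tfrac12\bigl(-k-g_{A^*(\underline B_3),\underline B_3}(x_1(\underline B_3))\bigr)>0$, defining $d_1(B),D_1(B)$ as the crossings of level $-k-M_1$, computing via the Implicit Function Theorem that $D_1(B)$ is \emph{increasing} in $B$ (hence $D_1(B)\ge D_1(\underline B_3)$), noting $d_1(B)<x_1(B)\to-\infty$, and then bounding $\Lambda_1(B)\le-M_1\bigl(D_1(B)-d_1(B)\bigr)\to-\infty$. You would need either that monotonicity computation for $D_1(B)$, or, sticking with your $A^*(B)$-bounded route, an explicit fixed $x_0$ with $g_{A^*(B),B}(x_0)<-k-\delta$ for all $B$ near $\overline B$ (which can be extracted from $\sup_B A^*(B)<\infty$ together with $g_{0,\overline B}(x_0)\to-\infty$ as $x_0\to-\infty$), to make the step rigorous.
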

\begin{proof}
By  Lemma~\ref{lem:Astar} and the inequality in (\ref{eq:B2}),
for $B\in(\underline{B}_2,\overline{B})$, we have
$A^*(B)\in (\underline{A}(B), \overline{A}(B))$.
It follows from  (\ref{eq:gless-k}) that
\begin{displaymath}
g_{A^*(B),B}(x_1(B))=A^*(B)-h(x_1(B))<-k.
\end{displaymath}
Therefore, there exist unique $d(B)$ and $D(B)$ such that
\begin{eqnarray*}
&&d(B)<x_1(B)<D(B)<U(A^*(B), B),\\
&&g_{A^*(B),B}(d(B))=g_{A^*(B),B}(D(B))=-k,\\
&&g_{A^*(B),B}'(d(B))<0,\ g_{A^*(B),B}'(D(B))>0.
\end{eqnarray*}
Let
\begin{equation}
  \label{eq:Lambda1Astar}
  \Lambda_1(A^*(B),B)=\int_{d(B)}^{D(B)}[g_{A^*(B),B}(x)+k]dx.
\end{equation}
We are going to prove that
$\Lambda_1(A^*(B),B)$ is continuous and strictly decreasing in
$B\in[\underline{B}_2,\overline{B})$
and
\begin{equation*}
  \lim_{B\downarrow \underline{B}_2} \Lambda_1(A^*(B),B)=0 \quad \text{and}
  \quad
  \lim_{B\uparrow \overline{B}} \Lambda_1(A^*(B),B)=-\infty.
\end{equation*}
Therefore,
there exists a unique $B^*\in(\underline{B}_2,\overline{B})$
such that 
\begin{eqnarray*}
\Lambda_1(A^*(B^*),B^*)=-K,
\end{eqnarray*}
from which one proves the lemma.

To prove that
$\Lambda_1(A^*(B),B)$ is continuous and strictly decreasing in
$B\in[\underline{B}_2,\overline{B})$,
we apply the {\em Implicit Function Theorem} to (\ref{eq:Astar}). We have
\begin{eqnarray}
\label{eq:dA/dB}
\frac{d A^*(B)}{d B}
=\frac{\int_{U(A^*(B),B)}^{u(A^*(B),B)}e^{-\lambda(x-a)}dx}{u(A^*(B),B)-U(A^*(B),B)}
>0.
\end{eqnarray}
Equation (\ref{eq:dA/dB}) yields that, for $x\in[d(B),D(B)]$
\begin{eqnarray}
\frac{\partial g_{A^*(B),B}(x)}{\partial B} \label{eq:dgAstardB}
&=&\frac{d A^*(B)}{d B}-e^{-\lambda (x-a)}\nonumber\\
&=&\frac{\int_{U(A^*(B),B)}^{u(A^*(B),B)}\Big[e^{-\lambda (y-a)}-e^{-\lambda(x-a)}\Big]dy}{u(A^*(B),B)-U(A^*(B),B)} \nonumber\\
&<&0. \nonumber
\end{eqnarray}
This in turn  implies that
\begin{equation}
\label{eq:dLambda1AstardB}
\frac{\partial \Lambda_1(A^*(B),B)}{\partial B}=\int_{d(B)}^{D(B)}
\frac{\partial g_{A^*(B),B}(x)}{\partial B}dx<0.
 \end{equation}
Therefore,  $\Lambda_1(A^*(B),B)$ is strictly decreasing in  $B\in[\underline{B}_2,\overline{B})$.

It follows from
\eqref{eq:B2} and Lemma \ref{lem:Astar} that
\begin{eqnarray*}
A^*(\underline{B}_2)=\overline{A}(\underline{B}_2).
\end{eqnarray*}
This, together with the definition of $\overline{A}(B)$ in
\eqref{eq:AoverlineB},
shows that
\begin{equation}
  \label{eq:gB2minusk}
g_{A^*(\underline{B}_2),\underline{B}_2}(x_1(\underline{B}_2))
=g_{\overline{A}(\underline{B}_2),\underline{B}_2}(x_1(\underline{B}_2))
=\overline{A}(\underline{B}_2)-\frac{1}{\mu}h(x_1(\underline{B}_2))
=-k.
\end{equation}
Therefore, we have
\begin{displaymath}
\lim_{B\downarrow \underline{B}_2}D(B)=\lim_{B\downarrow
  \underline{B}_2}d(B)=x_1(\underline{B}_2).
\end{displaymath}
It follows that
\begin{equation}
  \label{eq:Lambda1AstarupperLimit}
\lim_{B\downarrow \underline{B}_2}\Lambda_1(A^*(B),B)=0.
\end{equation}
It remains to prove
\begin{equation}
  \label{eq:Lambda1negativeInfinity}
  \lim_{B\uparrow
  \overline{B}}\Lambda_1(A^*(B),B)=-\infty.
\end{equation}
For $B\in(\underline{B}_2,\overline{B})$,
\begin{eqnarray*}
\frac{\partial g_{A^*(B),B}(x_1(B))}{\partial B}
&=&\frac{d A^*(B)}{d B}-e^{-\lambda (x_1(B)-a)}+g_{A^*(B),B}'(x_1(B))\frac{d x_1(B)}{d B}\\
&=&\frac{\int_{U(A^*(B),B)}^{u(A^*(B),B)}\Big[e^{-\lambda(y-a)}-e^{-\lambda (x_1(B)-a)}\Big]dy}{u(A^*(B),B)-U(A^*(B),B)} \nonumber\\
&<&0,
\end{eqnarray*}
which, together with (\ref{eq:gB2minusk}), implies that
\begin{equation}
  \label{eq:glessminusk}
g_{A^*(B),B}(x_1(B))<-k
\end{equation}
for each $B\in(\underline{B}_2,\overline{B})$. Fix  a
$\underline{B}_3\in(\underline{B}_2,\overline{B})$ and  let
\begin{displaymath}
  M_1 = \Bigl(-k - g_{A^*(\underline{B}_3),\underline{B}_3}(x_1(\underline{B}_3))\Bigr)/2.
\end{displaymath}
It follows from (\ref{eq:glessminusk}) that $M_1>0$.
Then for each $B\in(\underline{B}_3, \overline{B})$,
\begin{displaymath}
  g_{A^*(B),B}(x_1(B))<   g_{A^*(\underline{B}_3),\underline{B}_3}(x_1(\underline{B}_3))
  = -k -2M_1<-k-M_1.
\end{displaymath}
Therefore,  for each $B\in(\underline{B}_3, \overline{B})$ there exist unique
$d_1(B)$ and $D_1(B)$ such that
\begin{eqnarray}
&&d_1(B)<x_1(B)<D_1(B),\nonumber\\
&&g_{A^*(B),B}(d_1(B))=g_{A^*(B),B}(D_1(B))=-k-M_1,\label{eq:d1D1}\\
&&g_{A^*(B),B}'(d_1(B))<0,\ g_{A^*(B),B}'(D_1(B))>0. \nonumber
\end{eqnarray}
The properties of $g$ in Lemma~\ref{thm:optimalParameter} (see also
Figure \ref{fig:g1})
imply that  for each $B\in(\underline{B}_3, \overline{B})$
\begin{eqnarray*}
d(B)<d_1(B)<x_1(B)<D_1(B)<D(B).
\end{eqnarray*}
This, together with (\ref{eq:x2Blimit})  implies that
\begin{eqnarray}
\label{eq:lim d_1(B)}
\lim_{B\uparrow\overline{B}} d_1(B)=-\infty.
\end{eqnarray}
Note that for $x\in (d(B),D(B))$, $g_{A^*(B),B}(x)<-k$.
Therefore, for $B\in(\underline{B}_3, \overline{B})$,
\begin{eqnarray*}
\Lambda_1(A^*(B),B)
&=&\int_{d(B)}^{D(B)}[g_{A^*(B),B}(x)+k]dx\\
&\leq&\int_{d_1(B)}^{D_1(B)}[g_{A^*(B),B}(x)+k]dx\\
&\leq&\int_{d_1(B)}^{D_1(B)}[-M_1]dx\\
&=&-M_1(D_1(B)-d_1(B)).
\end{eqnarray*}
It follows from (\ref{eq:d1D1}) and (\ref{eq:dA/dB}) that
 for each $B\in(\underline{B}_3, \overline{B})$,
\begin{eqnarray*}
\frac{d D_1(B)}{d B}
=\frac{\int_{U(A^*(B),B)}^{u(A^*(B),B)}\Big[e^{-\lambda (D_1(B)-a)}-e^{-\lambda(x-a)}\Big]dx}{(u(A^*(B),B)-U(A^*(B),B))g'(D_1(B))}
>0.
\end{eqnarray*}
Thus, for
any $B\in(\underline{B}_3, \overline{B})$,
\begin{displaymath}
  D_1(B) \ge D_1(\underline{B}_3).
\end{displaymath}
Thus, for any $B\in(\underline{B}_3, \overline{B})$,
\begin{equation}
  \label{eq:Lambda1Bound}
  \Lambda_1(A^*(B), B) \le M_1 d_1(B) - M_1 D_1(\underline{B}_3).
\end{equation}
Now (\ref{eq:Lambda1negativeInfinity}) readily follows from
(\ref{eq:Lambda1Bound}) and  (\ref{eq:lim d_1(B)}).
\end{proof}

\section{Singular  Controls}
\label{sec:instanteneous}

In this section, we assume that  $K=0$ and $L=0$. Therefore, we
restrict our feasible policies to singular controls also known as
instantaneous controls  as in
(\ref{eq:Y1N1xi1}) and (\ref{eq:Y2N2xi2}).
A two-parameter control band policy is defined by two
parameters $d$, $u$, where $d<u$. No control is
exercised until the inventory level $Z(t)$ reaches the lower boundary $d$ or
the upper boundary $u$. When $Z(t)$ reaches a boundary, there is no
advantage in using impulse control because there is no fixed cost.

\subsection{Control Band Policies}
\label{sec:controlLimit}
Let us fix a two-parameter control band policy $\varphi=\{d, u\}$.
To mathematically describe the control process $(Y_1, Y_2)$, we need to
use two-sided regulator: for each $x\in \D$ with $x(0)\in [d, u]$,
find a triple $(y_1, y_2, z)\in \D^3$
such that
\begin{eqnarray}
 &&  z(t) = x(t) + y_1(t) - y_2(t), \quad t\ge 0, \label{eq:2sided-1}\\
 &&  z(t) \in [d, u], \quad t\ge 0, \\
 &&  y_1(0)=y_2(0)=0,\quad   y_1 \text{ and } y_2 \text{ are nondecreasing}, \\
 && \text{$y_1$ and $y_2$ increases only when $z=d$ and $z=u$, respectively.} \label{eq:2sided-4}
\end{eqnarray}
The precise mathematical meaning of (\ref{eq:2sided-4}) is
\begin{equation}
  \label{eq:2-sided-4b}
  \int_0^\infty (z(t)-d)\, dy_1(t) =0 \quad \text{and}\quad
  \int_0^\infty (u-z(t))\, dy_2(t) =0.
\end{equation}
One can verify that (\ref{eq:2-sided-4b}) is equivalent to the
following: whenever $z(t)>d$ for $t\in [t_1, t_2]$,
$y_1(t_2)-y_1(t_1)=0$ and whenever $z(t)<u$ for $t\in [t_1, t_2]$,
$y_2(t_2)-y_2(t_1)=0$. Lemma \ref{lem:2sidedRegulator} below follows
from Proposition 6 in Section 2.4 of \cite{har85}. That proposition is
stated for each continuous path $x\in \D$; one can verify
that the proposition continues to hold when the continuity of $x$ is
dropped.

\begin{lemma}
\label{lem:2sidedRegulator}
  For each $x\in \D$ with $x(0)\in [d, u]$, there exists a unique
  triple $(y_1, y_2, z)\in \D^3$ that satisfies
  (\ref{eq:2sided-1})-(\ref{eq:2-sided-4b}).
\end{lemma}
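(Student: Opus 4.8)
The statement is that the two-sided Skorokhod reflection problem for the path $x$ on the interval $[d,u]$ has a unique solution, and the remark preceding the lemma already reduces this to checking that the continuous-path construction of \cite[Section~2.4, Proposition~6]{har85} survives when $x$ is only right continuous with left limits. My plan is to carry out this check in three steps: uniqueness, existence by alternating one-sided reflections, and the identification of the jump behaviour, which is the one point that genuinely uses something beyond the continuous case.

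For uniqueness, suppose $(y_1,y_2,z)$ and $(\tilde y_1,\tilde y_2,\tilde z)$ both satisfy (\ref{eq:2sided-1})--(\ref{eq:2-sided-4b}) for the same $x$, and put $w=z-\tilde z$. Since the $x$-terms cancel, $w=(y_1-\tilde y_1)-(y_2-\tilde y_2)$ is a right-continuous function of bounded variation with $w(0)=0$, so $w(t)^2=2\int_{(0,t]}w(s-)\,dw(s)+\sum_{0<s\le t}(\Delta w(s))^2$. Writing $dw$ as $dy_1-d\tilde y_1-dy_2+d\tilde y_2$ and using that the measures $dy_1,d\tilde y_1$ are carried by $\{z=d\}$ and $dy_2,d\tilde y_2$ by $\{z=u\}$ (for atoms this is part of the last step below, and for the non-atomic parts it is the complementarity condition), together with $d\le z,\tilde z\le u$, one sees each of the four resulting continuous-variation terms in $\int w(s-)\,dw^c(s)$ is $\le 0$. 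For the contribution of the jumps, I would invoke the identity $z(t)=\Pi_{[d,u]}\bigl(z(t-)+\Delta x(t)\bigr)$ established below, where $\Pi_{[d,u]}$ is the nearest-point projection; since $\Pi_{[d,u]}$ is $1$-Lipschitz this gives $\abs{w(t)}\le\abs{w(t-)}$, hence $\sum_{s\le t}\bigl(w(s)^2-w(s-)^2\bigr)\le 0$. Combining, $w(t)^2\le 0$, so $z=\tilde z$; then $d(y_1-\tilde y_1)$ and $d(y_2-\tilde y_2)$ are carried by the disjoint sets $\{z=d\}$ and $\{z=u\}$ (here $d<u$ is used) and sum to zero, so $y_1=\tilde y_1$ and $y_2=\tilde y_2$.

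For existence I would iterate the one-sided Skorokhod map. Let $\Psi_d$ and $\Psi^u$ be defined by $(\Psi_d g)(t)=g(t)+\sup_{0\le s\le t}(d-g(s))^+$ and $(\Psi^u g)(t)=g(t)-\sup_{0\le s\le t}(g(s)-u)^+$; each is nonexpansive in the supremum norm and maps $\D$ into $\D$ because the running-supremum functional does. Set $z_0=x$ and $z_{n+1}=\Psi^u\Psi_d z_n$; then by induction $z_n=x+y_1^{(n)}-y_2^{(n)}$ with $y_1^{(n)},y_2^{(n)}$ nondecreasing, and by the contraction estimate underlying Proposition~6 of \cite{har85} (which uses only $u>d$, not the continuity of $x$) the residual violations $\sup_{s\le t}(d-z_n(s))^+$ and $\sup_{s\le t}(z_n(s)-u)^+$ decay geometrically on each compact interval. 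Hence $z_n\to z$ uniformly on compacts with $z\in\D$, the increasing processes $y_1^{(n)},y_2^{(n)}$ converge uniformly to nondecreasing $y_1,y_2\in\D$, and passing to the limit yields (\ref{eq:2sided-1})--(\ref{eq:2-sided-4b}); the complementarity relations survive the limit because at each stage $y_1$ only increases where the current path touches $d$ and $y_2$ only where it touches $u$, a property stable under uniform convergence.

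The one step that is not automatic is the jump bookkeeping, which I therefore regard as the main obstacle. I would show that the integral conditions (\ref{eq:2-sided-4b}), read at an atom $t$ of $dy_1$ or $dy_2$ with the post-jump value of the integrand, together with $\Delta y_1(t),\Delta y_2(t)\ge 0$ and $z(t)\in[d,u]$, force $\Delta y_1(t)\,\Delta y_2(t)=0$ and $z(t)=\Pi_{[d,u]}\bigl(z(t-)+\Delta x(t)\bigr)$: if the unregulated position $z(t-)+\Delta x(t)$ lies in $(d,u)$ then $(z(t)-d)\Delta y_1(t)=0$ and $(u-z(t))\Delta y_2(t)=0$ leave no room for either jump, while if it lies below $d$ (resp.\ above $u$) those same two identities force the net correction to bring $z(t)$ exactly to $d$ (resp.\ $u$). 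This uses only the convexity of $[d,u]$ — its metric projection is single-valued, moves a point in one direction, and is a contraction — and it is precisely what makes the argument insensitive to whether $x$ is continuous. With this in hand, both the uniqueness estimate and the convergence of the alternating scheme go through as in \cite{har85}.
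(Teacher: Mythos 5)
Your proof is correct, and it does more work than the paper does: the paper's entire ``proof'' is a citation to Proposition~6 of Harrison's book together with the unverified assertion that the continuous-path argument ``continues to hold'' for RCLL inputs. You actually carry out that verification, following the same underlying architecture (alternating one-sided Skorokhod maps with geometric decay of residual violations for existence, a signed-measure contraction estimate for uniqueness) that Harrison uses for continuous paths. The genuinely new step you isolate --- that complementarity at an atom forces $\Delta y_1(t)\Delta y_2(t)=0$ and $z(t)=\Pi_{[d,u]}(z(t-)+\Delta x(t))$ --- is exactly what is missing from the continuous-path treatment, and your case analysis on the position of $z(t-)+\Delta x(t)$ relative to $[d,u]$ is correct (the case $z(t-)+\Delta x(t)\in(d,u)$ yields a contradiction from $d\neq u$ if either atom is positive; the other two cases pin $z(t)$ to an endpoint). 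The BV identity $w(t)^2=2\int_{(0,t]}w(s-)\,dw^c(s)+\sum_{0<s\le t}\bigl(w(s)^2-w(s-)^2\bigr)$ is the right decomposition, the sign of each of the four continuous-variation terms follows from the complementarity support conditions as you say, and the jump sum is controlled by the $1$-Lipschitz property of $\Pi_{[d,u]}$. One small point worth making explicit in a final write-up: after concluding $z=\tilde z$, you invoke that $d(y_1-\tilde y_1)$ is a signed measure whose total variation is still carried by $\{z=d\}$ (and similarly for the other pair on $\{z=u\}$), which is what lets you split the identity $y_1-\tilde y_1=y_2-\tilde y_2$ across the two disjoint closed sets; this is true but deserves a sentence.
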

The lemma asserts that the map $\Psi: x\in \D_0\to (y_1, y_2, z)\in
\D^3$ is well defined, where $\D_0= \{x\in \D: x(0)\in[d, u]\}$.
 In the following, we use notation
\begin{displaymath}
  y_1= \Psi_1(x), \quad
  y_2= \Psi_2(x), \quad \text{and }
  z = \Psi_3(x).
\end{displaymath}
The nondecreasing functions $(y_1, y_2)$ are said to be the two-sided
regulator of $x$, and $z$ is the regulated path of $x$.
When either $u=\infty$ or $d=-\infty$, the corresponding one-sided
regular is defined in Section 2.2 of \cite{har85}.

Under the control band policy $\{d, u\}$ with initial inventory level
$x\in[d, u]$, the controls $(Y_1, Y_2)$ are given
by $Y_1=\Psi_1(X)$,  $Y_2=\Psi_2(X)$, and the inventory process
$Z=\Psi_3(X)$.

To find the long-run average cost under the policy $\varphi=\{d, u\}$, we use
the following theorem.
\begin{theorem}
\label{thm:controllimit}
Fix a control band policy $\varphi=\{d, u\}$. If there
exist a constant $\gamma$ and a twice continuously differentiable function
$V:[d,u]\rightarrow \R$ that satisfies
\begin{equation}
 \Gamma V(x)+h(x)=\gamma, \quad d\leq x\leq u,\label{eq:Poissonab}
\end{equation}
with boundary conditions
\begin{eqnarray}
 && V'(d)=-k,\label{eq:Va}\\
 && V'(u)= \ell .\label{eq:Vb}
\end{eqnarray}
Then the average cost $AC(x,\varphi)$ is independent of the initial
inventory level $x\in \R$ and is given by $\gamma$ in \eqref{eq:Poissonab}.
\end{theorem}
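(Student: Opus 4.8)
The plan is to adapt the argument of Theorem~\ref{thm:control band}, replacing the impulse (jump) terms by the continuous regulator terms produced by the two-sided reflection. Since $K=L=0$, under the policy $\varphi=\{d,u\}$ the controlled process $Z$ never leaves $[d,u]$ after a possibly nonzero adjustment at time $0$; the case $x\notin[d,u]$ differs from the case $x\in[d,u]$ only by a single initial adjustment whose finite cost contributes $0$ to the long-run average, so I may assume $x\in[d,u]$, in which case $Y_1(0)=Y_2(0)=0$. After this reduction, Lemma~\ref{lem:2sidedRegulator} applies to the (continuous) netput path, and the continuity of $X$ forces the regulators $Y_1=\Psi_1(X)$ and $Y_2=\Psi_2(X)$ to be continuous, so $Y_i=Y_i^c$ for $i=1,2$ and $Z$ has no jumps on $(0,\infty)$.

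First I would extend $V$ to all of $\R$ by affine continuation off $[d,u]$, using slope $V'(d)=-k$ on $(-\infty,d)$ and slope $V'(u)=\ell$ on $(u,\infty)$; the boundary conditions \eqref{eq:Va} and \eqref{eq:Vb} make the extension $C^1(\R)$, and its second derivative is bounded (it equals the continuous function $V''$ on $[d,u]$ and $0$ outside), hence locally $L^1$, so Lemma~\ref{lem Ito} is applicable. Applying the It\^o formula \eqref{eq:ito} to $V(Z(t))$: the jump sum vanishes; since $Z(s)\in[d,u]$ and $V$ solves \eqref{eq:Poissonab} there, $\int_0^t\Gamma V(Z(s))\,ds=\gamma t-\int_0^t h(Z(s))\,ds$; and the complementarity conditions \eqref{eq:2-sided-4b}, together with the continuity of $Z$, force $dY_1^c$ to be carried by $\{s:Z(s)=d\}$ and $dY_2^c$ by $\{s:Z(s)=u\}$, so that
\[
\int_0^t V'(Z(s-))\,dY_1^c(s)=V'(d)Y_1(t)=-kY_1(t),\qquad
\int_0^t V'(Z(s-))\,dY_2^c(s)=V'(u)Y_2(t)=\ell Y_2(t).
\]
Collecting terms gives
\[
V(Z(t))=V(Z(0))+\gamma t-\int_0^t h(Z(s))\,ds+\sigma\int_0^t V'(Z(s))\,dW(s)-kY_1(t)-\ell Y_2(t).
\]

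Next I take $\E_x$ of this identity. Because $V'$ is continuous on the compact interval $[d,u]$ it is bounded there, so the It\^o integral is a square-integrable martingale with zero mean; because $h$ and $V$ are continuous, hence bounded on $[d,u]$, all remaining terms (in particular $\E_x\int_0^t h(Z(s))\,ds$ and $\E_x[Y_i(t)]$) are finite. This yields
\[
\E_x\Big[\int_0^t h(Z(s))\,ds\Big]+k\,\E_x[Y_1(t)]+\ell\,\E_x[Y_2(t)]=\gamma t+\E_x[V(Z(0))]-\E_x[V(Z(t))].
\]
Dividing by $t$ and letting $t\to\infty$: since $K=L=0$ the cost \eqref{eq:AC-1} contains no $N_i$ terms, and $|\E_x[V(Z(t))]|\le\sup_{[d,u]}|V|<\infty$, so $\tfrac1t\E_x[V(Z(t))]\to0$ and likewise $\tfrac1t\E_x[V(Z(0))]\to0$. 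Hence $\AC(x,\varphi)=\gamma$, which visibly does not depend on $x$.

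I expect the only genuinely delicate step to be the identification of the two regulator integrals: one must argue from \eqref{eq:2-sided-4b} that $dY_1^c$ puts no mass outside $\{Z=d\}$ and that $dY_1^c$-almost every time is a continuity point of $Z$, so that $Z(s-)=d$ there (and symmetrically at $u$). Everything else is routine precisely because confining $Z$ to the compact interval $[d,u]$ makes $h$, $V$, and $V'$ bounded along the path: the applicability of Lemma~\ref{lem Ito} to the extended $V$, the martingale property of the It\^o integral, the finiteness of all expectations, and the vanishing of the $\tfrac1t$-terms then follow at once. In effect this theorem is the $K=L=0$, $D=d$, $U=u$ degeneration of Theorem~\ref{thm:control band}.
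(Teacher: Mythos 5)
Your proof is correct and follows essentially the same route as the paper's: apply the Itô formula, use the complementarity conditions of the two-sided regulator to reduce the $dY_i$ integrals to $V'(d)Y_1(t)$ and $V'(u)Y_2(t)$, take expectations, and divide by $t$. You are somewhat more careful than the paper about the technical hypotheses of Lemma~\ref{lem Ito} (extending $V$ to $\R$ by affine continuation, and noting that continuity of $X$ makes the regulators continuous so the jump sum vanishes), which the paper's proof leaves implicit; otherwise the two arguments coincide.
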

\begin{proof}
First we assume $x\in [d, u]$. In this case, $Z(0)=x$.
  By It\^{o}'s formula,
    \begin{eqnarray*}
    V(Z(t)) & = & V(Z(0)) + \int_0^t \Gamma V(Z(s))ds + \sigma \int_0^t V'(Z(s))
    dW(s) + \int_0^t V'(Z(s))dY_1(s)\\
   && { }-\int_0^t V'(Z(s))dY_2(s) \\
 &=&   V(Z(0)) + \int_0^t \Gamma V(Z(s))ds + \sigma \int_0^t V'(Z(s))
    dW(s) +  V'(d)Y_1(t)- V'(u)Y_2(t) \\
 &=&   V(Z(0)) + \gamma t - \int_0^t h(Z(s))ds + \sigma \int_0^t V'(Z(s))
    dW(s) -k Y_1(t)- \ell Y_2(t).
    \end{eqnarray*}
Therefore
\begin{displaymath}
  \E_x[V(Z(t))] = \E_x[V(Z(0))] + \gamma t - \biggl (\int_0^t h(Z(s))ds
 + k Y_1(t) + \ell Y_2(t)  \biggr ).
\end{displaymath}
Dividing both sides by $t$ and taking the limit as $t\to\infty$,
we have $\AC(x,\varphi)=\gamma$.

When $x\not\in [d, u]$, we assume $Z$ immediately jumps to the closest
point in $[d, u]$ at time $0$. Therefore, $Z(0)=d$ if $x<d$ and
$Z(0)=u$ if $x>u$. Since $Z(0)\in [d, u]$, the rest of the proof is
identical to the case when $x\in [d, u]$.

\end{proof}

\begin{proposition}
 Let $\varphi=\{d,u\}$ be a control band policy with
  \begin{displaymath}
    d<u.
  \end{displaymath}
Let  $m \in \R$ be any fixed number. Define
\begin{displaymath}
  V(x)=  \int_{m}^x g(y) dy
\end{displaymath}
with
\begin{displaymath}
  g(x)= V'(m) e^{\lambda (m-x)}  + \gamma \frac{2}{\sigma^2} \int_m^x
e^{\lambda (y-x)}dy -\frac{2}{\sigma^2}\int_m^x h(y)e^{\lambda (y-x)}dy,
\end{displaymath}
where
\begin{eqnarray}
 &&  \gamma = \frac{d_1(f_2+\ell)+d_2(f_1+k)}{d_1e_2+d_2e_1}, \label{eq:gamma-instantaneous} \\
 && V'(m) = \frac{e_1(f_2+\ell)+e_2(f_1+k)}{d_1e_2+d_2e_1}. \label{eq:Vprime-instantaneous}
\end{eqnarray}
Then $(V, \gamma)$ is a solution to
(\ref{eq:Poissonab})-(\ref{eq:Vb}). In (\ref{eq:gamma-instantaneous}) and
(\ref{eq:Vprime-instantaneous}), we set
\begin{eqnarray}
&&d_1=e^{\lambda (m-d)}, \quad  d_2=e^{\lambda (m-u)}, \label{eq:coeffd}\\
&&e_1=-\frac{2}{\sigma^2} \int_m^d e^{\lambda (y-d)}dy, \quad
e_2=\frac{2}{\sigma^2} \int_m^u e^{\lambda (y-u)}dy, \label{eq:coeffe}\\
&&f_1=-\frac{2}{\sigma^2}\int_m^d h(y)e^{\lambda (y-d)}dy , \quad
f_2=\frac{2}{\sigma^2}\int_m^u h(y)e^{\lambda (y-u)}dy. \label{eq:coefff}
\end{eqnarray}
\end{proposition}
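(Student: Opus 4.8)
The plan is to specialize the argument of Proposition~\ref{prop:controlband} to the degenerate control band, with the two smooth-pasting conditions (\ref{eq:Va})--(\ref{eq:Vb}) now playing the role that the jump conditions (\ref{V(d)-V(D)})--(\ref{V(u)-V(U)}) played there. First I would note that the Poisson equation (\ref{eq:Poissonab}) is equivalent to $\bigl(e^{\lambda x}V'(x)\bigr)' = \frac{2}{\sigma^2}(\gamma-h(x))e^{\lambda x}$; integrating over $[m,x]$ shows that, for \emph{any} values of the two free constants $\gamma$ and $V'(m)$, the function $V(x)=\int_m^x g(y)\,dy$ with $g$ as in the statement is twice continuously differentiable on $[d,u]$ and satisfies (\ref{eq:Poissonab}). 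This is verbatim the computation already performed in the proof of Proposition~\ref{prop:controlband}; the only difference is that here, because the boundary data constrain $V'$ directly rather than increments of $V$, no double integrals appear and only the two integration constants remain to be pinned down.

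Second, I would impose the boundary conditions. Evaluating $g$ at $x=d$ and $x=u$, using $e^{\lambda(m-d)}=d_1$ and $e^{\lambda(m-u)}=d_2$ from (\ref{eq:coeffd}) and recognizing the resulting single integrals as $e_1,e_2,f_1,f_2$ of (\ref{eq:coeffe})--(\ref{eq:coefff}), the conditions (\ref{eq:Va}), (\ref{eq:Vb}) become the linear system
\begin{align*}
  d_1 V'(m) - e_1 \gamma &= -(k + f_1), \\
  d_2 V'(m) + e_2 \gamma &= \ell + f_2,
\end{align*}
in the unknowns $\bigl(V'(m),\gamma\bigr)$. Its coefficient matrix has determinant $d_1 e_2 + d_2 e_1$, and a one-line evaluation of the elementary integrals gives $d_1 e_2 + d_2 e_1 = \frac{2}{\sigma^2\lambda}\bigl(e^{\lambda(m-d)}-e^{\lambda(m-u)}\bigr)$ when $\lambda\neq 0$ (and $\frac{2}{\sigma^2}(u-d)$ when $\lambda=0$), which is strictly positive because $d<u$. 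Hence the system is nonsingular, and solving it by Cramer's rule yields exactly (\ref{eq:gamma-instantaneous}) and (\ref{eq:Vprime-instantaneous}). Inserting these choices of $\gamma$ and $V'(m)$ into the first step, $(V,\gamma)$ solves (\ref{eq:Poissonab})--(\ref{eq:Vb}), which is the assertion.

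There is no real obstacle: the statement is a direct corollary of Proposition~\ref{prop:controlband}, and the only point that needs more than bookkeeping is the non-vanishing of the determinant $d_1 e_2 + d_2 e_1$, which is what makes $\gamma$ and $V'(m)$ (hence $V$, up to an additive constant) uniquely determined by the two pasting conditions. The remaining work is the routine arithmetic of matching the Cramer's-rule output against the coefficients defined in (\ref{eq:coeffd})--(\ref{eq:coefff}).
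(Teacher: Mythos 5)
Your proposal is correct and follows the same route as the paper: integrate the Poisson equation once to obtain $g=V'$ with two free constants $\gamma$ and $V'(m)$, substitute the boundary conditions (\ref{eq:Va}) and (\ref{eq:Vb}) to get the stated $2\times 2$ linear system, and solve. The only material difference is that you explicitly compute and check that the determinant $d_1 e_2 + d_2 e_1 > 0$, a point the paper leaves implicit; this is a minor but welcome addition rather than a different argument.
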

\begin{proof}
Similar to the proof of Proposition \ref{prop:controlband},
equation (\ref{eq:Poissonab}) implies that
\begin{displaymath}
  V'(x)=
e^{\lambda (m-x)}  V'(m) + \gamma \frac{2}{\sigma^2} \int_m^x
e^{\lambda (y-x)}dy -\frac{2}{\sigma^2}\int_m^x h(y)e^{\lambda (y-x)}dy.
\end{displaymath}
Boundary conditions (\ref{eq:Va}) and (\ref{eq:Va}) become
\begin{eqnarray}
e^{\lambda (m-d)}  V'(m) + \gamma \frac{2}{\sigma^2} \int_m^d
e^{\lambda (y-d)}dy -\frac{2}{\sigma^2}\int_m^d h(y)e^{\lambda (y-d)}dy=-k,\label{eq:boundaryk}\\
e^{\lambda (m-u)}  V'(m) + \gamma \frac{2}{\sigma^2} \int_m^u
e^{\lambda (y-u)}dy -\frac{2}{\sigma^2}\int_m^u h(y)e^{\lambda (y-u)}dy=\ell.\label{eq:boundaryl}
\end{eqnarray}
Using the coefficients defined in (\ref{eq:coeffd})-(\ref{eq:coefff}),
we see the
 boundary conditions (\ref{eq:boundaryk}) and
(\ref{eq:boundaryl}) become
\begin{eqnarray*}
&&  d_1 V'(m) -\gamma e_1 = -(k+f_1), \\
&&  d_2 V'(m) + \gamma e_2 = \ell+f_2,
\end{eqnarray*}
from which we have unique solution for $\gamma$ and  $V'(m)$
given in (\ref{eq:gamma-instantaneous}) and (\ref{eq:Vprime-instantaneous}).
\end{proof}

\subsection{Optimal Policy and Optimal Parameters}
\label{sec:optimalinstantaneous}

Theorem \ref{thm:lowerbound} suggests the following strategy to obtain
an optimal policy. We hope the optimal policy is a control band
policy. Therefore, the first task is to find an optimal control band
policy among all control band policies. Denote this optimal control
band policy by $\varphi^*=\{d^*, u^*\}$, $d^*<u^*$,  with long-run average cost
$\gamma^*$.  We hope that $\gamma^*$ can be used  the constant in
\eqref{eq:lbPoission} of Theorem \ref{thm:lowerbound}. To find the
corresponding $f$ that satisfies all the conditions of  Theorem
\ref{thm:lowerbound}, we start with the relative value function
$V(x)$ associated with the policy $\varphi^*$ that is defined on the
interval $[d^*, u^*]$. We need to extend $V(x)$ so that it can be
defined on $\R$. Given that $V(x)$ is the relative value function,
it is natural to define
\begin{eqnarray}
\label{eq:f-instantaneous}
  f(x) =
  \begin{cases}
   V(d^*)+k(d^*-x)  & \text{ for } x<d^*, \\
   V(x) & \text{ for } d^*\leq x\leq u^*, \\
   V(u^*)+\ell(x-u^*)  & \text{ for } x>u^*.
  \end{cases}
\end{eqnarray}
Since we wish $f\in C^1(\R)$, we should have
\begin{equation}
  \label{eq:Boundarykl-instantaneous}
  V'(d^*)=-k, \quad  V'(u^*)=\ell.
\end{equation}
We also hope $f\in C^2(\R)$, we should have
the following conditions,
\begin{eqnarray}
\label{eq:b1}
 && V''(d^*) = 0, \quad V''(u^*) = 0.\label{eq:BoundaryV''-instantaneous}
\end{eqnarray}

In this section, we will first prove the existence of parameters
$d^*$ and $u^*$ such that the relative value function $V$ corresponding
the control band policy $\varphi=\{d^*, u^*\}$ satisfies
(\ref{eq:Poissonab})-(\ref{eq:Vb}), and
(\ref{eq:Boundarykl-instantaneous})-(\ref{eq:BoundaryV''-instantaneous}). Since part of the solution
is to find the boundary points $d^*$ and $u^*$,
equations (\ref{eq:Poissonab})-(\ref{eq:Vb}), and
(\ref{eq:Boundarykl-instantaneous})-(\ref{eq:BoundaryV''-instantaneous}) define a free boundary
problem.
We then prove that the extension $f$ in (\ref{eq:f-instantaneous}) and
$\gamma^*=\AC(\varphi^*, x)$ jointly satisfy all the conditions in
Theorem  \ref{thm:lowerbound}.

In the rest of this section, we assume that $\mu>0$.
The statement and analysis for the cases $\mu<0$ and $\mu=0$ are
analogous and are omitted.
Recall the function $g(x)=g_{A, B}(x)$ defined in (\ref{eq:g1}).

\begin{figure}[tb]
  \begin{center}
   \includegraphics[width=10cm]{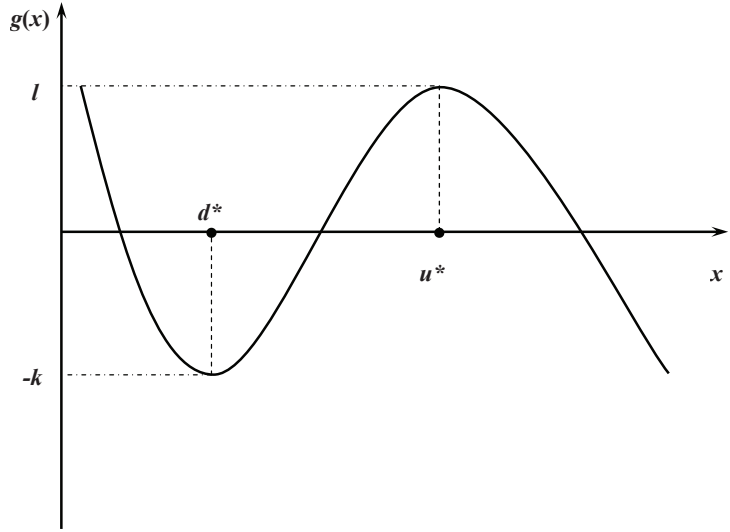}\label{fig:instan:g1}
\caption{There exist unique $d^*=x_1(\underline{B}_1)$ and
  $u^*=x_2(\underline{B}_1)$.}
\end{center}
\end{figure}

\begin{theorem}
\label{lem:optimalParameter-instantaneous}
There exist unique $A^*$, $B^*$, $d^*$ and $u^*$
such that $g(x)=g_{A^*,B^*}(x)$, $d^*$ and $u^*$ satisfy
\begin{eqnarray}
 &&  g(d^*) = -k, \label{eq:Boundary-g1}\\
 &&  g(u^*) = \ell , \label{eq:Boundary-g2}\\
 && g'(d^*) = 0, \label{eq:Boundary-g3}\\
 && g'(u^*) = 0.\label{eq:Boundary-g4}
\end{eqnarray}
Furthermore, $g(x)$ decreases in $(-\infty, d^*)$, increases in $(d^*,
u^*)$, and decreases again in $(u^*, \infty)$.
\end{theorem}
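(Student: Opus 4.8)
The plan is to obtain $(A^*,B^*,d^*,u^*)$ by specializing the machinery of Section~\ref{sec:optimal-control-band} to $K=L=0$, where the two inner band parameters $D,U$ of the impulse case collapse onto $d,u$ respectively. First I would argue that the shape requirement in the statement already pins down the \emph{location} of the boundary points in terms of $B^*$ alone. Conditions \eqref{eq:Boundary-g3}--\eqref{eq:Boundary-g4} say that $d^*$ and $u^*$ are critical points of $g_{A^*,B^*}$, and since $g$ must decrease on $(-\infty,d^*)$, increase on $(d^*,u^*)$, and decrease on $(u^*,\infty)$, the point $d^*$ must be a local minimizer and $u^*$ a local maximizer. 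Writing $g'_{A,B}(x)=\lambda F_1(B,x)e^{-\lambda(x-a)}$ with $F_1$ as in \eqref{eq:F1} (increasing on $(-\infty,a)$, decreasing on $(a,\infty)$), a sign change of $g'$ from negative to positive can occur only at a zero of $F_1$ in $(-\infty,a)$, and one from positive to negative only at a zero in $(a,\infty)$. By Lemma~\ref{thm:optimalParameter} such zeros exist precisely when $B^*\in(0,\overline B)$, in which case they are $x_1(B^*)<a<x_2(B^*)$, independent of the parameter $A$. Hence necessarily $B^*\in(0,\overline B)$, $d^*=x_1(B^*)$, $u^*=x_2(B^*)$, and in particular $d^*<u^*$.

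Next I would use Lemma~\ref{lem:x_i}(b), which evaluates $g$ at the extrema: $g_{A,B}(x_i(B))=A-h(x_i(B))/\mu$. Substituting into the level conditions \eqref{eq:Boundary-g1}--\eqref{eq:Boundary-g2} gives $A^*=h(x_1(B^*))/\mu-k=\overline A(B^*)$ and $A^*=h(x_2(B^*))/\mu+\ell=\underline A(B^*)$, with $\overline A,\underline A$ as in \eqref{eq:AoverlineB} and \eqref{eq:AunderlineB}. Thus the system \eqref{eq:Boundary-g1}--\eqref{eq:Boundary-g4} is solvable if and only if $\overline A(B^*)=\underline A(B^*)$; since $\overline A(B)-\underline A(B)=\tilde g(B)-(k+\ell)$ with $\tilde g$ from \eqref{eq:tildeg}, this is exactly the equation $\tilde g(B^*)=k+\ell$.

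Finally, Lemma~\ref{lem:B1} supplies existence and uniqueness: $\tilde g$ is continuous and strictly increasing on $(0,\overline B)$ with $\lim_{B\downarrow 0}\tilde g(B)=0$ and $\lim_{B\uparrow\overline B}\tilde g(B)=+\infty$, so there is a unique $B^*\in(0,\overline B)$ with $\tilde g(B^*)=k+\ell$ — precisely the quantity $\underline B_1$ of \eqref{eq:B1}. Setting $A^*=\overline A(B^*)=\underline A(B^*)$, $d^*=x_1(B^*)$, $u^*=x_2(B^*)$ then satisfies \eqref{eq:Boundary-g1}--\eqref{eq:Boundary-g4}; the monotonicity assertion is Lemma~\ref{thm:optimalParameter}(c) read off at these values; and uniqueness of the quadruple is inherited from that of $B^*$ together with the rigidity in the first paragraph. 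The one step requiring genuine care — and the only place the full strength of the shape hypothesis is used — is that first paragraph: verifying that the required decreasing/increasing/decreasing profile forces the boundary points to coincide with the interior extrema $x_1(B^*),x_2(B^*)$ and, in particular, forces $B^*$ into the window $(0,\overline B)$ on which Lemma~\ref{thm:optimalParameter} applies. The remainder is bookkeeping, mirroring the impulse argument in the limit $K,L\downarrow 0$.
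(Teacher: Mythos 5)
Your proof is correct and takes essentially the same route as the paper: both reduce the problem to Lemma~\ref{lem:B1} and set $B^*=\underline{B}_1$, $d^*=x_1(\underline{B}_1)$, $u^*=x_2(\underline{B}_1)$, with $A^*$ fixed by $\overline{A}(\underline{B}_1)=\underline{A}(\underline{B}_1)$. You are somewhat more careful than the paper's terse proof in making the uniqueness direction explicit (showing the shape hypothesis forces $B^*\in(0,\overline{B})$ and pins the boundary points to $x_1(B^*),x_2(B^*)$), and your formula $A^*=h(x_1(\underline{B}_1))/\mu-k=h(x_2(\underline{B}_1))/\mu+\ell$ corrects a sign typo in the paper's displayed expression for $A^*$.
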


\begin{proof}
Recall  the definition of $\overline{B}$ in
(\ref{eq:under b over B}).
For each $B\in (0, \overline{B})$, by Lemma \ref{lem:x_i},
there is a unique local minimizer $x_1(B)<a$ and a unique local
maximizer $x_2(B)>a$ for function $g_{0, B}(x)$. By Lemma \ref{lem:B1},  there
exists a unique $\underline{B}_1\in (0, \overline{B})$ that satisfies
(\ref{eq:B1}). Let
\begin{displaymath}
A^*=h(x_1(\underline{B}_1))/\mu + \ell = h(x_2(\underline{B}_1))/\mu-k.
\end{displaymath}
Then $g(x)=g_{A^*, \underline{B}_1}(x)$, $d^*=x_1(\underline{B}_1)$ and
$u^*=x_2(\underline{B}_1)$ satisfy
(\ref{eq:Boundary-g1})-(\ref{eq:Boundary-g4});
see Figure~\ref{fig:instan:g1}.
\end{proof}

Now we show that the control band policy $\varphi^*=\{d^*,u^*\}$
is optimal policy among all feasible policies.

\begin{theorem}
\label{thm:optimal-instantaneous}
  Assume that $h$ satisfies Assumption
  \ref{assumption:h}. Let $d^*$ and $u^*$, along with constants $A^*$ and $B^*$,
  be the unique solution in
  Theorem~\ref{lem:optimalParameter-instantaneous}. Then the control
  band policy
  $\varphi^*=\{d^*, u^*\}$ is optimal among all feasible policies.
\end{theorem}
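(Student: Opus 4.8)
The plan is to follow the template of the proof of Theorem~\ref{thm:optimal}, but the construction is now genuinely simpler because, with $K=L=0$, there are no integral side-conditions to keep track of. Starting from the quadruple $(A^*,B^*,d^*,u^*)$ delivered by Theorem~\ref{lem:optimalParameter-instantaneous}, I would set $g=g_{A^*,B^*}$ and introduce the truncated derivative
\[
  \bar g(x)=
  \begin{cases}
    -k, & x<d^*,\\
    g(x), & d^*\le x\le u^*,\\
    \ell, & x>u^*,
  \end{cases}
\]
together with $V(x)=\int_{d^*}^x\bar g(y)\,dy$, which is precisely the extension $f$ proposed in \eqref{eq:f-instantaneous}. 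The boundary values \eqref{eq:Boundary-g1}--\eqref{eq:Boundary-g2} make $\bar g$ continuous, so $V\in C^1(\R)$; the smooth-fit conditions \eqref{eq:Boundary-g3}--\eqref{eq:Boundary-g4} make $\bar g\in C^1(\R)$, so $V'$ is absolutely continuous and $V''$ is locally bounded (equal to $g'$ on $(d^*,u^*)$ and to $0$ outside $[d^*,u^*]$). Taking $M=\sup_{[d^*,u^*]}\abs{g(x)}$ gives $\abs{V'(x)}\le M$ on all of $\R$, since $\abs{-k}=\abs{g(d^*)}\le M$ and $\abs{\ell}=\abs{g(u^*)}\le M$.

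Next I would pin down the candidate optimal cost. On $[d^*,u^*]$ the function $V$ is twice continuously differentiable, $V'=g$ solves \eqref{eq:gPoisson}, and integrating \eqref{eq:gPoisson} shows that $\Gamma V(x)+h(x)$ is constant on $[d^*,u^*]$; since $V'(d^*)=g(d^*)=-k$ and $V'(u^*)=g(u^*)=\ell$, Theorem~\ref{thm:controllimit} identifies this constant as the long-run average cost $\gamma^*$ of the policy $\varphi^*$. Evaluating at $x=d^*$ and using $g'(d^*)=0$, $g(d^*)=-k$ gives $\gamma^*=-\mu k+h(d^*)$, and symmetrically $\gamma^*=\mu\ell+h(u^*)$.

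Then I would check the three hypotheses of Theorem~\ref{thm:lowerbound} for the pair $(V,\gamma^*)$. For \eqref{eq:lbPoission}: on $(d^*,u^*)$ it holds with equality; for $x<d^*$ we have $\Gamma V(x)+h(x)=-\mu k+h(x)\ge -\mu k+h(d^*)=\gamma^*$, because $d^*=x_1(\underline B_1)<a$ by Lemma~\ref{lem:x_i} and $h$ is nonincreasing on $(-\infty,a]$ by Assumption~\ref{assumption:h}; symmetrically, for $x>u^*$, $\Gamma V(x)+h(x)=\mu\ell+h(x)\ge\mu\ell+h(u^*)=\gamma^*$ since $u^*=x_2(\underline B_1)>a$ and $h$ is nondecreasing on $[a,\infty)$. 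For \eqref{eq:lbK} and \eqref{eq:lbL}: since $K=L=0$ these reduce, by the remark following Theorem~\ref{thm:lowerbound}, to $-k\le V'(x)\le\ell$ for all $x$; this follows from the shape of $g$ asserted in Theorem~\ref{lem:optimalParameter-instantaneous}, namely that on $[d^*,u^*]$ the function $g$ attains its minimum $-k$ at $d^*$ and its maximum $\ell$ at $u^*$, together with $-k<0<\ell$ for the two flat pieces. By Theorem~\ref{thm:lowerbound}, $\AC(x,\varphi)\ge\gamma^*$ for every feasible $\varphi$ and every $x\in\R$; since $\AC(x,\varphi^*)=\gamma^*$, the control band policy $\varphi^*$ is optimal.

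I do not expect a substantive obstacle: essentially all the work is already in Theorem~\ref{lem:optimalParameter-instantaneous}, and what remains is a verification. The one point requiring care is the extension of \eqref{eq:lbPoission} beyond $[d^*,u^*]$: this is exactly where the extra smoothness $g'(d^*)=g'(u^*)=0$ (the counterpart of the conditions $V''(d^*)=V''(u^*)=0$ motivating the free boundary problem) is used, since it forces the constant value $\gamma^*$ of $\Gamma V+h$ on $(d^*,u^*)$ to equal $-\mu k+h(d^*)$ and $\mu\ell+h(u^*)$, after which monotonicity of $h$ away from $a$ and the strict ordering $d^*<a<u^*$ complete the inequality on the two affine pieces.
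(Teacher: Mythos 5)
Your proof is correct and follows essentially the same route as the paper: the same truncated derivative $\bar g$, the same $V$, the same use of the smooth-pasting conditions $g'(d^*)=g'(u^*)=0$ to push the inequality $\Gamma V+h\ge\gamma^*$ past the boundaries via monotonicity of $h$ away from $a$, and the same monotonicity-of-$g$ argument for the gradient constraints $-k\le V'\le\ell$. The only cosmetic difference is that you make the values $\gamma^*=-\mu k+h(d^*)=\mu\ell+h(u^*)$ explicit, which the paper leaves implicit.
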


\begin{proof}
Let $g(x)$, $x\in \R$, be the function in (\ref{eq:g1}) with $A=A^*$
and $B=B^*$. Let
\begin{displaymath}
  \bar g(x) =
  \begin{cases}
    -k, & x< d^*, \\
    g(x), & d^*\le x \le u^*, \\
    \ell, &  x>u^*.
  \end{cases}
\end{displaymath}
Define
\begin{eqnarray}
\label{eq:V-instantaneous}
  V(x) = \int_{d^*}^x \bar g(y)dy.
\end{eqnarray}
Let $\gamma^*$ be the long-run average cost under policy
$\varphi^*$. We now show that $V$ and $\gamma^*$ satisfy all the
conditions in Theorem \ref{thm:lowerbound}. Thus, Theorem
\ref{thm:lowerbound} shows that the long-run average cost under any
policy is at least $\gamma^*$. Therefore, $\gamma^*$ is the optimal
cost and the control band policy $\varphi^*$ is an optimal policy.
Now we check that $V(x)$ is in $C^2(\R)$ and satisfies (\ref{eq:lbPoission})-(\ref{eq:lbL}).

First,  $V(x)$ is in $C^2([d^*, u^*])$.
Lemma~\ref{lem:optimalParameter-instantaneous} and the definition of $V$ in (\ref{eq:V-instantaneous}) imply that
\begin{eqnarray*}
\lim_{x\uparrow d^*} V''(x)=0=\lim_{x\downarrow d^*} V''(x),
\mbox{ and } \lim_{x\uparrow u^*} V''(x)=0=\lim_{x\downarrow u^*} V''(x).
\end{eqnarray*}
Then, $V''(x)$ is continuous at $d^*$ and $u^*$.
Note that $V''(x)=0$ in $(-\infty,d^*)$ and $(u^*,+\infty)$.
Therefore, $V(x)$ is in $C^2(\R)$.
Let
\begin{eqnarray*}
M=\sup_{x\in[d^*,u^*]}\abs{g(x)},
\end{eqnarray*}
we have $\abs{V'(x)}\leq M$ for all $x\in \R$.

To check (\ref{eq:lbPoission}), we first find that
$\Gamma V(x)+h(x)=\gamma^*$ for $d^*\leq x\leq u^*$.
For $x<d^*$,
\begin{eqnarray*}
&&\Gamma V(x)+h(x)\\
&=& \frac{\sigma^2}{2}V''(x)+\mu V'(x)+h(x)\\
&=& \frac{\sigma^2}{2}V''(d^*)+\mu V'(d^*)+h(x)\\
&\geq& \frac{\sigma^2}{2}V''(d^*)+\mu V'(d^*)+h(d^*)\\
&=&\gamma^*,
\end{eqnarray*}
where the second equality is because for $x<d^*$,
$V''(x)=0=V''(d^*)$ and $V'(x)=-k=V'(d^*)$,
the inequality is due to $x<d^*=x_1<a$, where $a$ again is the
minimum point of $h$.
Similarly, for $x>u^*$, $\Gamma V(x)+h(x)\geq\gamma^*$.

Finally, (\ref{eq:lbK}) and (\ref{eq:lbL}) hold because
$\overline{g}(x)$ is strictly increasing in $x$, $x\in[d^*,u^*]$, and
$\overline{g}(d^*)=g(d^*)=-k$, $\overline{g}(u^*)=g(u^*)=\ell$ (See
Figure \ref{fig:instan:g1}). Thus, the optimality of control band
policy $\varphi^*$ is implied by Theorem \ref{thm:lowerbound}.

\end{proof}

\section{No Inventory Backlog }
\label{sec:nonnegative}
In this section, the inventory backlog is not allowed and thus we add
the constraint $Z(t)\geq 0$ for all $t\geq 0$. The holding cost
function $h(\cdot)$ is defined on $[0,\infty)$, and $a\in [0, \infty)$
is its minimum point. We focus on the impulse control case when $K>0$ and
$L>0$. Thus, this section parallels
Section~\ref{sec:impulse-controls}. In particular, the results and
proofs in
this section are analogous to that in Section
\ref{sec:impulse-controls}. In our presentation, we will highlight the
differences.

For a control band policy $\{d,D,U,u\}$ with $0\leq d<D<U<u$,
one can continue to use Theorem \ref{thm:control band} to evaluate its
performance and to obtain is relative value function.
But the lower bound theorem, Theorem~\ref{thm:lowerbound}, needs to be
slightly modified as in the following theorem.
\begin{theorem}
\label{thm:lowerbound nonnegative}
Suppose that $f\in C^1([0,+\infty))$ and $f'$ is absolutely continuous such
  that $f''$ is
  locally $L^1$. Suppose that there exists a constant $M>0$ such
  that  $|f'(x)|\leq M $ for all $x\in [0,+\infty)$.
  Assume further that
  \begin{eqnarray}
    && \Gamma f(x)+h(x)\geq \gamma \mbox{ for $x\in
      [0,+\infty)$},\label{eq:lbPoission nonnegative}\\
    && f(y) - f(x)\le K+ k(x-y) \mbox{ for $0\leq y<x$},\label{eq:lbK nonnegative} \\
    && f(y) - f(x)\le L+\ell(y-x) \mbox{ for $0\leq x<y$}.\label{eq:lbL nonnegative}
  \end{eqnarray}
Then $\AC(x,\varphi)\geq \gamma$ for each feasible policy $\varphi$  and each
initial state $x\in [0,+\infty)$.
\end{theorem}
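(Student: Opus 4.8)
The plan is to mimic the proof of Theorem~\ref{thm:lowerbound} (Case~I, $K>0$, $L>0$), adapting it to the half-line by restricting every process to stay in $[0,\infty)$ and taking care that the It\^o formula and the various estimates only ever reference values of $f$, $h$, and $Z$ on $[0,\infty)$. Since feasibility now includes the constraint $Z(t)\ge 0$ for all $t\ge 0$, the inventory process never leaves $[0,\infty)$, so the hypotheses on $f$ being defined only on $[0,\infty)$ suffice: at no point in the argument do we evaluate $f$ (or its derivatives) outside $[0,\infty)$.

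First I would fix a feasible policy $\varphi=(Y_1,Y_2)$ with $Z(t)\ge 0$ for all $t$, and apply the It\^o formula of Lemma~\ref{lem Ito} to $f(Z(t))$; this is legitimate because $f\in C^1([0,\infty))$ with $f'$ absolutely continuous and $f''$ locally $L^1$, and the sample paths of $Z$ stay in $[0,\infty)$, so only the restriction of $f$ to $[0,\infty)$ is used. Using \eqref{eq:lbPoission nonnegative} in place of \eqref{eq:lbPoission}, \eqref{eq:lbK nonnegative} in place of \eqref{eq:lbK}, and \eqref{eq:lbL nonnegative} in place of \eqref{eq:lbL}, the same chain of inequalities that produced \eqref{eq:itoinequality2} gives, for $K>0$ and $L>0$,
\begin{displaymath}
  f(Z(t)) \ge f(Z(0-)) + \gamma t -\int_0^t h(Z(s))ds + \sigma \int_0^t
  f'(Z(s)) dW(s) - \sum_{n=0}^{N(t)} \phi(\xi_n),
\end{displaymath}
where $\phi$ is the same cost function as before; note the jumps $\Delta Z(s)$ move $Z$ between nonnegative values, so \eqref{eq:lbK nonnegative}--\eqref{eq:lbL nonnegative} applied on $[0,\infty)$ cover every jump. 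Taking $\E_x$, using $|f'|\le M$ to kill the stochastic integral, bounding $f(Z(t))\le (f(Z(t)))^+$, dividing by $t$ and letting $t\to\infty$ yields the analogue of \eqref{eq:interInequality}.

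The remaining dichotomy is handled exactly as in the original proof, with one simplification. If $\liminf_{t\to\infty}\frac1t\E_x[(f(Z(t)))^+]=0$, we are done. If this $\liminf$ equals some $b>0$, then from $|f'|\le M$ and $Z(t)\ge 0$ we get $\E_x|Z(t)|=\E_x Z(t)\ge \frac1M((b/2)t-\E_x[(f(Z(0)))^+])-\E_x|Z(0)|$ for large $t$, hence \eqref{eq:Zinfinite}. Now, since $Z\ge 0$, only the ``$+\infty$ side'' is possible: \eqref{eq:Zinfiniteplus} holds (there is no $-c$ branch to worry about). Using part~(c) of Assumption~\ref{assumption:h}, there exist $h_1>0$ and $c\ge 0$ with $h'(y)\ge h_1$ for $y\ge c$, so $h(y)\ge h_1 y + h(c) - c h_1$ for $y\ge c$, giving $h(y)\mathbf 1_{\{y\ge c\}}\ge h_1 y\mathbf 1_{\{y\ge c\}}-ch_1$; combined with \eqref{eq:Zinfiniteplus} this forces $\liminf_{t\to\infty}\frac1t\int_0^t\E_x[h(Z(s))]\,ds=\infty$, i.e. $\AC(x,\varphi)=\infty\ge\gamma$. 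For the cases $K=0$ or $L=0$ one makes the same substitutions in Cases~II--IV of the original proof, using the half-line versions of the differential inequalities; these are again routine.

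The only genuinely new bookkeeping — and the place where one must be slightly careful rather than merely invoke the earlier proof — is verifying that nothing in the argument ever evaluates $f$, $f'$, or $f''$ at a negative point; this is immediate because feasibility forces $Z(t)\in[0,\infty)$, so $\Delta Z(s)$ always connects two nonnegative values and the integrands $f'(Z(s))$, $f''(Z(s))$, $h(Z(s))$ are all sampled in $[0,\infty)$. I do not expect any real obstacle: the half-line constraint actually removes the symmetric backlog branch of the coercivity step, so the proof is a strict simplification of Theorem~\ref{thm:lowerbound}. The one point worth stating explicitly in the write-up is that $c$ from \eqref{eq:hDerivatie} may be taken in $[0,\infty)$ (indeed $c\ge a$), so that the estimate $h(y)\ge h_1 y + \text{const}$ holds for all large $y\ge 0$, which is all that is needed.
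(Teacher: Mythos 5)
Your proposal is correct, and it takes precisely the approach the paper has in mind: the paper does not write out a proof of Theorem~\ref{thm:lowerbound nonnegative}, stating only that Theorem~\ref{thm:lowerbound} ``needs to be slightly modified,'' and your argument supplies exactly that modification. You correctly identify the two observations that make the half-line version go through: (1) feasibility forces $Z(t)\in[0,\infty)$, so It\^o's formula and the jump estimates from~\eqref{eq:lbK nonnegative}--\eqref{eq:lbL nonnegative} only ever evaluate $f$, $f'$, $f''$, $h$ on $[0,\infty)$; and (2) in the coercivity step, since $Z\ge 0$, only the $Z\ge c$ branch of~\eqref{eq:hDerivatie} survives, so the dichotomy reduces to a single case. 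This is indeed a strict simplification of the proof of Theorem~\ref{thm:lowerbound}.
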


\subsection{Optimal Policy Parameters}

Recall that for a given set of parameters $\{d,D,U,u\}$ with
$0\le d<D<U<u$, the
corresponding relative value
function satisfies (\ref{Poisson equ})-(\ref{V(u)-V(U)}).
To search for the optimal parameters $(d^*, D^*, U^*, u^*)$, we impose
the following
conditions on $\{d,D,U,u\}$
and $V$:
\begin{eqnarray}
 && V'(U)=l, \label{eq:V'(U) nonnegative}\\
 && V'(u)=l, \label{eq:V'(u) nonnegative}\\
 && V'(D)=-k, \label{eq:V'(D) nonnegative}\\
 && V'(d)=-k-\alpha, \label{eq:V'(d) nonnegative}\\
 && 0\le d< D< U<u, \label{eq:d>0 nonnegative} \\
 && \alpha d=0,\ \ \mbox{and} \label{eq:Lagarange nonnegative}\\
 && \alpha\ge 0.
\end{eqnarray}
In some cases, it is optimal to have $d^*=0$. In such a case,
one  only needs to solve for three parameters $D^*$, $U^*$ and $u^*$.
\begin{figure}[t]
  \begin{center}
\subfigure[Case $d>0$]
   {\includegraphics[width=7cm]{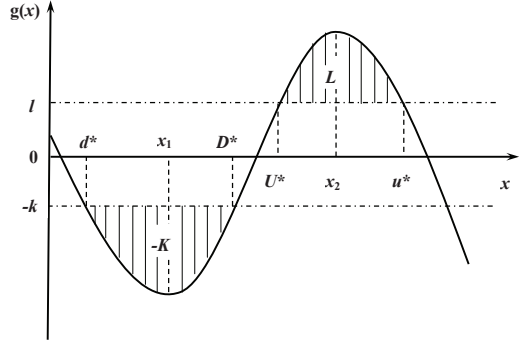}\label{fig:g2a}}\\
\subfigure[Case $d=0$]
  {\includegraphics[width=7cm]{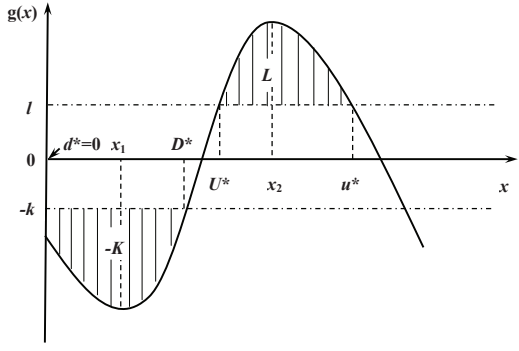}\label{fig:g2b}}
\subfigure[Case $d=0$]
  {\includegraphics[width=7cm]{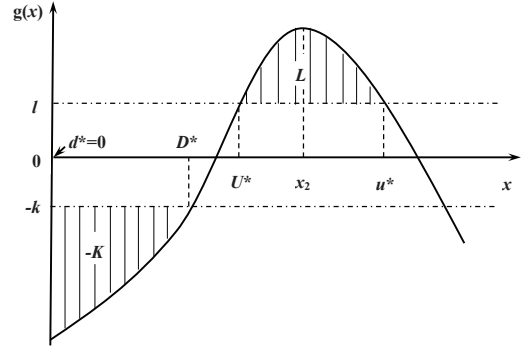}\label{fig:g2c}}
\caption{In the nonnegative case, the optimal control band policy has two possible cases: $d>0$ or $d=0$.}
\end{center}
\end{figure}
This section is analogous to Section \ref{sec:optimal}. We highlight
the differences between these two sections and omit some details
to avoid repetition.

Recall that $a$ is the minimum point of the holding cost function
$h(x)$ on $[0,\infty)$. It is possible $a=0$ or $a>0$. In the
following, whenever Assumption
\ref{assumption:h} is invoked for $h$, any condition on $h(x)$ with
$x<0$ is ignored.
Similar to Lemma \ref{lem:solutionToPoisson}, we have the following
lemma.
\begin{lemma}\label{lem:solutionToPoissonPos}
    For each $A, B\in \R$, function $g(x)=g_{A, B}(x)$ in (\ref{eq:g1}) is a solution to
  equation
  \begin{equation}
    \Gamma g(x) + h'(x) = 0 \quad \text{ for all } x\in
    [0,\infty)\setminus\{a\},
  \label{eq:gPoisson nonnegative}
  \end{equation}
\end{lemma}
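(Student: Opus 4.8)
The plan is to verify (\ref{eq:gPoisson nonnegative}) by a direct differentiation, exactly as in the proof of Lemma~\ref{lem:solutionToPoisson}; the content is literally the non-negative-orthant version of that lemma. The one new thing to check is that nothing in the computation uses values of $h$ on the negative half-line, and this is immediate: since $a\ge 0$, for every $x\ge 0$ all the integrals in the definition (\ref{eq:g1}) of $g_{A,B}$ are of the form $\int_a^x(\cdots)\,dy$ with both endpoints in $[0,\infty)$, so $g_{A,B}$ is well defined on $[0,\infty)$ and every expression below makes sense there.

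Concretely, first I would differentiate (\ref{eq:g1}) under the integral sign and use $h(a)=0$ to obtain, just as in the proof of Lemma~\ref{thm:optimalParameter}, that for all $x\ge 0$
\begin{displaymath}
 g_{A,B}'(x)=\lambda e^{-\lambda(x-a)}F_1(B,x),
\end{displaymath}
with $F_1$ as in (\ref{eq:F1}); this $g_{A,B}'$ is continuous on $[0,\infty)$. Next I would differentiate once more: for $x\ne a$ the derivative $\partial_x F_1(B,x)=-\tfrac1\mu h'(x)e^{\lambda(x-a)}$ exists by (\ref{eq:s2}), giving $g_{A,B}''(x)=-\lambda^2 e^{-\lambda(x-a)}F_1(B,x)-\tfrac{\lambda}{\mu}h'(x)$. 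Finally I would substitute into $\Gamma g=\tfrac12\sigma^2 g''+\mu g'$ and use $\lambda=2\mu/\sigma^2$ from (\ref{eq:lambda}) so that $\tfrac12\sigma^2\lambda=\mu$ and $\tfrac12\sigma^2\lambda^2=\mu\lambda$; the two terms proportional to $e^{-\lambda(x-a)}F_1(B,x)$ then cancel and what remains is $\Gamma g_{A,B}(x)+h'(x)=0$, which is (\ref{eq:gPoisson nonnegative}).

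I do not expect a genuine obstacle here; the only two points requiring a word of care are (i) that every integral stays inside $[0,\infty)$, which holds because $a\ge 0$, and (ii) that the point $x=a$ must be excluded, because $h'$ jumps there (recall Assumption~\ref{assumption:h}) so $g_{A,B}''$ does not exist at $a$. As an alternative to the bare-hands computation, one may instead quote Proposition~\ref{prop:controlband}: for any $0\le d<a<u$ the function $V(x)=\int_a^x g_{A,B}(y)\,dy$ solves the Poisson equation (\ref{Poisson equ}) on $[d,u]$ once the constants are matched as in the paragraph preceding Lemma~\ref{lem:solutionToPoisson}, and differentiating that identity in $x$ yields (\ref{eq:gPoisson nonnegative}) on $(d,u)\setminus\{a\}$; letting $d\downarrow 0$ and $u\uparrow\infty$ covers all of $[0,\infty)\setminus\{a\}$.
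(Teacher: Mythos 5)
Your proof is correct and follows the same route the paper intends: the paper offers no separate proof of Lemma~\ref{lem:solutionToPoissonPos}, merely asserting it is ``similar to Lemma~\ref{lem:solutionToPoisson},'' whose justification in the surrounding text is exactly the differentiation argument you carry out (or, equivalently, your alternative of differentiating the Poisson identity from Proposition~\ref{prop:controlband}). Your extra remark that all integrals in (\ref{eq:g1}) run between $a$ and $x$ with both endpoints in $[0,\infty)$, so no value of $h$ off the nonnegative half-line is ever used, is precisely the one point that makes the ``similar to'' claim legitimate, and your computation of $g'$, $g''$, and the cancellation via $\lambda=2\mu/\sigma^2$ is accurate.
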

The following theorem solves the free boundary problem
when inventory backlog is not allowed.
\begin{theorem}
\label{lem:optimalParameter nonnegative}
Assume  that the holding cost
function $h$ satisfies conditions (a)-(d) of
Assumption \ref{assumption:h}.
There exists unique $A$, $B$, $d$, $D$, $U$ and $u$
with
  \begin{displaymath}
  0\leq d< D \quad \text{ and } \quad U<x_2<u
  \end{displaymath}
and
such that the corresponding $g(x)=g_{A,B}(x)$ satisfies
\begin{eqnarray}
  && \int_d^D [g(x)+k] dx = -K, \label{eq:gK nonnegative}\\
  && \int_U^d[g(x)-\ell] dx = L, \label{eq:gL nonnegative}\\
  && g(d)=-k-\alpha,\ \  g(D) =-k, \label{eq:gk nonnegative}\\
  && g(U)=g(u) =\ell,\label{eq:gell nonnegative}\\
  && \alpha d=0,\ \ \mbox{and} \label{eq:Lagarange nonnegative2}\\
  && \alpha \ge 0.
\end{eqnarray}
Furthermore, $g$ has a local minimum at $x_1\le a$ and $g$ has the maximum at
$x_2>a$. The function $g$ is decreasing on $(0, x_1)$, increasing on
$(x_1,x_2)$ and decreasing again on $(x_2, \infty)$.
\end{theorem}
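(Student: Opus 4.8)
The plan is to run the argument of Section~\ref{sec:optimal-control-band} essentially verbatim for the \emph{upper} band and to modify only the \emph{lower}-band construction so as to respect the constraint $d\ge 0$ together with the complementary-slackness condition $\alpha d=0$. Since Lemma~\ref{thm:optimalParameter}, Lemma~\ref{lem:x_i}, Lemma~\ref{lem:B1}, Lemma~\ref{lem:Uu}, Lemma~\ref{lem:Lambda2continuity}, Lemma~\ref{lem:B1B2} and Lemma~\ref{lem:Astar} depend only on the shape of $g_{A,B}$ near and above $a$ and on the band $[U,u]\subset(a,\infty)$, they carry over once $\overline B$ is reinterpreted as $\overline B=-\frac1\mu\int_0^a h'(y)e^{\lambda(y-a)}dy$ (the part of Assumption~\ref{assumption:h} concerning $h$ on $(-\infty,0)$ being vacuous here); note this $\overline B$ is automatically finite and positive since $h'$ is bounded on the compact interval $[0,a]$. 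Thus for every $B$ in the admissible range there is still a unique $A^*(B)$ with $\Lambda_2(A^*(B),B)=L$, so that $g_{A^*(B),B}$ becomes a one-parameter family indexed by $B$, and the upper parameters $U(A^*(B),B)$, $u(A^*(B),B)$ are determined exactly as before.

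For the lower band I would track the would-be left endpoint $d(B)$ of Lemma~\ref{lem:d D}. Since $d(B)$ is continuous and strictly decreasing in $B$, there is a threshold $B_0$ with $d(B_0)=0$, equivalently $g_{A^*(B_0),B_0}(0)=-k$. On $[\underline B_2,B_0]$ the unconstrained construction of Lemma~\ref{lem:d D} applies: $\Lambda_1(A^*(B),B)=\int_{d(B)}^{D(B)}[g_{A^*(B),B}(x)+k]\,dx$ is continuous, strictly decreasing, equal to $0$ at $\underline B_2$. If $-K$ is attained on this interval at some $B^*\le B_0$, set $d^*=d(B^*)>0$, $D^*=D(B^*)$, $\alpha=0$, and $U^*,u^*$ as before (Figure~\ref{fig:g2a}); then \eqref{eq:gK nonnegative}--\eqref{eq:Lagarange nonnegative2} hold by construction and the asserted shape of $g$ follows from Lemma~\ref{thm:optimalParameter}. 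Otherwise $-K<\Lambda_1(A^*(B_0),B_0)$, and I pin $d=0$: for $B>B_0$ one has $g_{A^*(B),B}(0)<-k$, so put $\alpha(B)=-k-g_{A^*(B),B}(0)>0$, let $D(B)$ be the unique up-crossing of the level $-k$, and define $\Psi(B)=\int_0^{D(B)}[g_{A^*(B),B}(x)+k]\,dx<0$. The two branches agree at $B_0$ (there $\Psi(B_0)=\Lambda_1(A^*(B_0),B_0)$), so the concatenated map is continuous; that it is strictly decreasing rests on the new computation
\[
  \frac{\partial}{\partial B}\,g_{A^*(B),B}(0)=\frac{dA^*(B)}{dB}-e^{\lambda a}
  =\frac{1}{u(A^*(B),B)-U(A^*(B),B)}\int_{U(A^*(B),B)}^{u(A^*(B),B)}\bigl[e^{-\lambda(x-a)}-e^{\lambda a}\bigr]\,dx<0,
\]
valid because $U(A^*(B),B)>0$ and $\lambda>0$; the same inequality forces $D(B)$ to vary monotonically, and the ordering $D(B)<U(A^*(B),B)$ is automatic since $g_{A^*(B),B}$ is strictly increasing between its (possibly boundary) minimum and its local maximum and $-k\neq\ell$. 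A unique $B^*$ with $\Psi(B^*)=-K$ then furnishes $d^*=0$, $\alpha=\alpha(B^*)\ge 0$, $D^*=D(B^*)$ (Figures~\ref{fig:g2b}--\ref{fig:g2c}); uniqueness across both regimes follows from strict monotonicity of the concatenated map.

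The delicate point I expect to be the main obstacle is the behaviour of $\Psi$ as $B$ approaches the top of its range. Unlike the backlogged case, letting $B\uparrow\overline B$ no longer pushes the left endpoint to $-\infty$: the domain is truncated at $0$ and $x_1(B)\downarrow 0$, so $\Psi$ converges to a finite negative value on $(B_0,\overline B)$ and there is no solution there when $K$ is large. The fix is to \emph{continue} the parameter $B$ past $\overline B$ into the degenerate-shape regime, in which $g_{A,B}$ is increasing on $[0,a)$ and its minimum over $[0,\infty)$ is attained at the boundary point $0$ (this is exactly the case $x_1\le a$ degenerating to $x_1=0$ in the statement). There one must re-establish that $A^*(B)$ is still well-defined for all large $B$ (Lemma~\ref{lem:Astar} with $x_1(B)$ replaced by $0$), that $D(B),U(A^*(B),B)\in[0,x_2(B))$ with $g$ monotone between them, and that $g_{A^*(B),B}(0)$ — hence $\Psi(B)$ — decreases to $-\infty$ as $B\to\infty$, using that $\tfrac{dA^*}{dB}<e^{\lambda a}$ uniformly so that $A^*(B)-Be^{\lambda a}\to-\infty$. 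Verifying these degenerate-regime facts and checking that the band stays ordered throughout is where the real work lies; the remainder is a transcription of Section~\ref{sec:optimal-control-band}, and the concluding statement about the shape of $g$ follows from Lemma~\ref{thm:optimalParameter} as extended to $[0,\infty)$.
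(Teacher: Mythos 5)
Your proposal matches the paper's route very closely.  Like the paper, you reinterpret $\overline B$ as $\overline B_1 = -\tfrac1\mu\int_0^a h'(y)e^{\lambda(y-a)}dy$, carry over the upper-band lemmas (Lemmas~\ref{lem:Uu}, \ref{lem:Lambda2continuity}, the analogue of Lemma~\ref{lem:B1B2}) to define $A^*(B)$ and the pair $U,u$, and then split the lower-band analysis according to whether the unconstrained left endpoint $d(B)$ has reached $0$.  Your concatenated map $\Psi$, the threshold $B_0$ (the paper's $\overline B_1\wedge\underline B_4$), the complementary-slackness bookkeeping $\alpha(B)=-k-g_{A^*(B),B}(0)$, the sign computation $\tfrac{\partial}{\partial B}g_{A^*(B),B}(0)<0$ (the paper's \eqref{eq:dg(0)/dB nonnegative}), and the continuation of $B$ past $\overline B_1$ into the regime $x_1(B)=0$ all appear in Lemmas~\ref{thm:optimalParameterPos}--\ref{lem:d D nonnegative}.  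You also correctly flag that the interesting new work is exactly in the degenerate regime.

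The one place where your argument is both different from the paper's and somewhat underbaked is the final limit $\Psi(B)\to-\infty$.  You propose to show $A^*(B)-Be^{\lambda a}\to-\infty$ (hence $g_{A^*(B),B}(0)\to-\infty$), and then write ``--- hence $\Psi(B)$ --- decreases to $-\infty$''.  That ``hence'' is a leap: $g_{A^*(B),B}(0)\to-\infty$ does not by itself force the \emph{integral} $\int_0^{D(B)}[g+k]\,dx$ to $-\infty$, because as $B\to\infty$ the slope $g'_{A^*(B),B}(0)=\lambda(B-\overline B_1)e^{\lambda a}$ also blows up, so $g$ could in principle climb back to $-k$ over a shrinking interval and leave the area bounded.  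One has to rule this out, for instance by showing $D(B)\ge D(B_0)$ as you implicitly use, and then arguing more carefully.  The paper sidesteps this entirely: from \eqref{eq:dA/dB} and the ordering $D(B)<U(A^*(B),B)$ it bounds
\[
 \frac{d\Lambda_1(A^*(B),B)}{dB}\le\int_0^{D(B)}\bigl[e^{-\lambda(D(B)-a)}-e^{-\lambda(x-a)}\bigr]\,dx,
\]
shows via \eqref{eq:dDdBless0} that the right-hand side is itself decreasing in $B$, and hence is bounded above by the strictly negative value at any fixed $B_0$; this gives a uniform negative upper bound on the derivative and thus $\Lambda_1\to-\infty$ with no asymptotic analysis of $A^*(B)$ at all.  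Your approach can certainly be patched to work (the uniform gap $\tfrac{dA^*}{dB}\le e^{-\lambda(D(B_0)-a)}<e^{\lambda a}$ is correct once $D(B)\ge D(B_0)>0$ is in hand), but as written it leaves a nontrivial step implicit; the paper's derivative bound is the cleaner route and is worth adopting.
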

We leave the proof of Theorem~\ref{lem:optimalParameter nonnegative}
to the end of this section.

\begin{theorem}
\label{thm:optimal nonnegative}
  Assume that the holding cost function $h$ satisfies conditions
  (a)-(d) of  Assumption
  \ref{assumption:h}. Let $0\le d^*<D^*<U^*<u^*$, along with constants
  $A^*$ and
  $B^*$, be the unique solution in Theorem
  \ref{lem:optimalParameter nonnegative}. Then the control band policy
  $\varphi^*=\{d^*, D^*, U^*, u^*\}$ is optimal among all
  feasible policies to minimize the long-run average cost when
  inventory backlog is not allowed.
\end{theorem}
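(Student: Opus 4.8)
The plan is to repeat the argument of Theorem~\ref{thm:optimal} almost verbatim, replacing Theorem~\ref{thm:lowerbound} by its nonnegative analogue, Theorem~\ref{thm:lowerbound nonnegative}. Let $g=g_{A^*,B^*}$ be the function furnished by Theorem~\ref{lem:optimalParameter nonnegative}, set
\begin{displaymath}
  \bar g(x) =
  \begin{cases}
    -k, & 0\le x< d^*, \\
    g(x), & d^*\le x \le u^*, \\
    \ell, & x> u^*,
  \end{cases}
\end{displaymath}
where the first branch is vacuous when $d^*=0$, and define the test function $V(x)=\int_{d^*}^x\bar g(y)\,dy$ for $x\in[0,\infty)$. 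First I would record the regularity of $V$: since the complementary-slackness relation $\alpha d^*=0$ forces $g(d^*)=-k$ whenever $d^*>0$, conditions \eqref{eq:gk nonnegative} and \eqref{eq:gell nonnegative} make $\bar g$ continuous on $[0,\infty)$, so $V\in C^1([0,\infty))$, $V'=\bar g$ is absolutely continuous, and $V''$ (equal to $g'$ on $(d^*,u^*)$ and to $0$ off $[d^*,u^*]$) is locally $L^1$; moreover $\abs{V'(x)}\le M:=\sup_{x\in[d^*,u^*]}\abs{g(x)}$ for all $x\ge 0$, since $-k$ and $\ell$ are themselves values of $g$ at $d^*,D^*$ and at $U^*,u^*$. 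Restricted to $[d^*,u^*]$, $V$ is $C^2$ and, by \eqref{eq:gPoisson nonnegative}, \eqref{eq:gK nonnegative} and \eqref{eq:gL nonnegative}, satisfies \eqref{Poisson equ}--\eqref{V(u)-V(U)}; hence Theorem~\ref{thm:control band} applies and identifies the constant in the Poisson equation as $\gamma^*:=\AC(x,\varphi^*)$.

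Next I would verify the three requirements of Theorem~\ref{thm:lowerbound nonnegative} with $\gamma=\gamma^*$. On $(d^*,u^*)$ we have $\Gamma V+h=\gamma^*$ by the previous step. For $x>u^*$, $\Gamma V(x)+h(x)=\mu\ell+h(x)$; since $u^*>x_2>a$ and $g'(u^*)\le 0$ (Theorem~\ref{lem:optimalParameter nonnegative}; see Figures~\ref{fig:g2a}--\ref{fig:g2c}), evaluating the Poisson equation at $u^*$ gives $\mu\ell+h(u^*)=\mu g(u^*)+h(u^*)\ge\gamma^*$, and since $h$ is increasing on $(a,\infty)$ we conclude $\Gamma V(x)+h(x)\ge\gamma^*$. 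If $d^*>0$, the symmetric estimate on $[0,d^*)$ uses $g'(d^*)\le 0$, $d^*<x_1\le a$, and the fact that $h$ is nonincreasing on $[0,a)$; if $d^*=0$ this region is empty, so nothing more is needed and \eqref{eq:lbPoission nonnegative} holds. For \eqref{eq:lbK nonnegative}, note that $\bar g(z)+k\le 0$ on $[d^*,D^*]$ while $\bar g(z)+k\ge 0$ for every other $z\ge 0$ — indeed $\bar g\equiv -k$ on $[0,d^*)$, $g\ge -k$ on $[D^*,u^*]$ because $g$ climbs from $-k$ at $D^*$ to its maximum and then descends to $\ell>-k$ at $u^*$, and $\bar g\equiv\ell\ge -k$ on $(u^*,\infty)$ — so for $0\le y<x$,
\begin{displaymath}
  V(x)-V(y)+k(x-y)=\int_y^x[\bar g(z)+k]\,dz\ \ge\ \int_{d^*}^{D^*}[\bar g(z)+k]\,dz=-K,
\end{displaymath}
using \eqref{eq:gK nonnegative}. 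Symmetrically, $\bar g(z)-\ell\ge 0$ on $[U^*,u^*]$ and $\bar g(z)-\ell\le 0$ for every other $z\ge 0$, so for $0\le x<y$ one gets $V(y)-V(x)-\ell(y-x)\le\int_{U^*}^{u^*}[\bar g(z)-\ell]\,dz=L$ by \eqref{eq:gL nonnegative}, which is \eqref{eq:lbL nonnegative}. Theorem~\ref{thm:lowerbound nonnegative} then yields $\AC(x,\varphi)\ge\gamma^*$ for every feasible $\varphi$ and every $x\ge 0$, and since $\varphi^*$ attains $\gamma^*$ by Theorem~\ref{thm:control band}, it is optimal.

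I expect the only genuine friction, relative to Theorem~\ref{thm:optimal}, to be the bookkeeping around the reflecting endpoint $x=0$ and the two geometric alternatives $d^*>0$ and $d^*=0$ of Figures~\ref{fig:g2a}--\ref{fig:g2c}. Two points deserve care. First, Theorem~\ref{thm:lowerbound nonnegative} imposes no smooth-pasting condition at $0$ — only the three inequalities on $[0,\infty)$ — so no additional boundary equation is needed there; this is precisely why the free-boundary system may collapse to three parameters with $d^*=0$. Second, when $d^*=0$ and $\alpha>0$, so that $V'(0)=g(0)=-k-\alpha<-k$, the extra steepness of $V$ at the origin is harmless: it only makes $\bar g+k$ more negative on $[0,D^*]$, and that loss is exactly the quantity controlled by \eqref{eq:gK nonnegative}, so the displayed chain of inequalities is unchanged. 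Everything else is a line-by-line transcription of the proof of Theorem~\ref{thm:optimal}.
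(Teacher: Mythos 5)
Your proof is correct and is exactly the line-by-line transcription that the paper's own one-line proof (``identical to that of Theorem~\ref{thm:optimal}'') asserts, with the right care taken at the two places where the nonnegative problem genuinely differs: the region $[0,d^*)$ is either treated exactly as the region $(-\infty,d^*)$ was (when $d^*>0$, where complementary slackness gives $g(d^*)=-k$) or is vacuous (when $d^*=0$), and the gradient bound $|V'|\le M$ still holds because $-k=g(D^*)$ and $\ell=g(u^*)$ regardless of whether $g(d^*)$ equals $-k$ or $-k-\alpha$.
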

\begin{proof}
The proof is identical to that of Theorem \ref{thm:optimal}.
\end{proof}

The rest of this section is devoted to the proof for
Theorem~\ref{lem:optimalParameter nonnegative}.
This proof  is similar to the proof of Theorem
\ref{lem:optimalParameters}. We provide an outline of the proof
for Theorem~\ref{lem:optimalParameter nonnegative}, highlighting
differences between the two proofs.
We only consider the case when
$\mu>0$. Other cases are analogous and are omitted.
Define
\begin{eqnarray}
\label{eq:under b over B nonnegative}
\overline{B}_1=-\frac{1}{\mu}\int_{0}^a
h'(y)e^{\lambda(y-a)}dy
\end{eqnarray}
Because $h'(x)<0$ for $x\in (0,a)$, $\overline{B}_1\ge  0$.

The following lemma is analogs to Lemma~\ref{thm:optimalParameter}. The only
difference is that the expression for  $x_1=x_1(B)$ has two forms in
Lemma \ref{thm:optimalParameterPos}.
\begin{lemma}
  \label{thm:optimalParameterPos}
  (a) For any $A\in \R$ and for each fixed
  $B\in(0,\infty)$, $g_{A, B}$ attains a unique
  minimum in $[0, a]$ at $x_1=x_1(B)\in [0, a]$. The
  function $g_{A, B}$ attains a unique maximum in $(a, \infty)$ at
  $x_2=x_2(B) \in (a, \infty)$. Both $x_1(B)$ and $x_2(B)$ are
  independent of $A$.

  (b) For each fixed $B\in(0,\infty)$, the local maximizer
  $x_2=x_2(B)$ is the unqiue solution in $(a, \infty)$ to
  (\ref{eq:xiBequation}). For $B\in(0,\overline{B}_1)$,
  the local minimizer $x_1=x_1(B)$ is the
  unique solution in $(0, a)$ to   (\ref{eq:xiBequation}).
  For $B\in [\overline{B}_1,\infty)$, $x_1=x_1(B)=0$.

  (c) For each $B\in(0,\infty)$, $g_{A, B}'(x)<0$
  for $x\in (0, x_1(B))$, $g_{A, B}'(x)>0$ for $x\in (x_1(B), x_2(B))$, and
  $g_{A, B}'(x)<0$ for $x\in(x_2(B), \infty)$.
\end{lemma}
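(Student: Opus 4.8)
The plan is to run the proof of Lemma~\ref{thm:optimalParameter} essentially verbatim, the only new ingredient being the behaviour of the critical points of $g_{A,B}$ at the left endpoint $0$ of the truncated state space. Differentiating (\ref{eq:g1}) gives $g'_{A,B}(x)=\lambda F_1(B,x)e^{-\lambda(x-a)}$ with $F_1(B,x)=B-\frac{1}{\mu}\int_a^x h'(y)e^{\lambda(y-a)}dy$, so the critical points of $g_{A,B}$ are exactly the zeros of $F_1(B,\cdot)$ and are independent of $A$; this already yields the "independent of $A$" assertions of part (a). Since $\partial F_1/\partial x=-\frac{1}{\mu}h'(x)e^{\lambda(x-a)}$ and, by parts (a) and (c) of Assumption~\ref{assumption:h}, $h'<0$ on $(0,a)$ and $h'>0$ on $(a,\infty)$, the function $F_1(B,\cdot)$ is strictly increasing on $[0,a)$, strictly decreasing on $(a,\infty)$, and $F_1(B,a)=B>0$.

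For $x_2(B)$ nothing changes: by convexity $h'$ is nondecreasing on $(a,\infty)$, so $h'(x)\ge h'(c)>0$ for $x\ge c$ (any fixed $c>a$), whence $\int_a^x h'(y)e^{\lambda(y-a)}dy\to\infty$ and $F_1(B,x)\to-\infty$ as $x\to\infty$; together with $F_1(B,a)=B>0$ and monotonicity this gives a unique zero $x_2(B)\in(a,\infty)$ solving (\ref{eq:xiBequation}), valid for every $B>0$. For $x_1(B)$ the one genuinely new observation is that $F_1(B,0)=B-\overline{B}_1$ with $\overline{B}_1$ as in (\ref{eq:under b over B nonnegative}) (note $\overline{B}_1\ge 0$, with $\overline{B}_1>0$ iff $a>0$, since $h'<0$ on $(0,a)$ and $\mu>0$). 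If $B\in(0,\overline{B}_1)$, then $F_1(B,0)<0<F_1(B,a)$, and strict monotonicity on $[0,a)$ produces a unique zero $x_1(B)\in(0,a)$ solving (\ref{eq:xiBequation}); if $B\ge\overline{B}_1$, then $F_1(B,0)\ge 0$ and hence $F_1(B,\cdot)>0$ on $(0,a]$, so $g'_{A,B}>0$ throughout $(0,a]$ and the minimum of $g_{A,B}$ over $[0,a]$ is attained at the boundary point $x_1(B)=0$. This is the two-form statement in part (b), and it covers the subcase $a=0$, where $\overline{B}_1=0$ so $x_1(B)=0$ for every $B>0$.

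Part (c) is then read off directly from the sign pattern of $F_1(B,\cdot)$ via $g'_{A,B}=\lambda F_1(B,\cdot)e^{-\lambda(\cdot-a)}$: negative on $(0,x_1(B))$ (an empty interval, hence vacuously true, when $x_1(B)=0$), positive on $(x_1(B),x_2(B))$, negative on $(x_2(B),\infty)$. I do not expect a real obstacle here, since the argument is a transcription of Lemma~\ref{thm:optimalParameter}; the only thing that needs care is the bookkeeping of the degenerate case $x_1(B)=0$ (equivalently $B\ge\overline{B}_1$), namely making sure that this pinned-to-the-boundary behaviour is compatible with the downstream analogues of Lemmas~\ref{lem:x_i}--\ref{lem:d D} and with the "local minimum at $x_1\le a$" phrasing in Theorem~\ref{lem:optimalParameter nonnegative}.
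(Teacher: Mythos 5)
Your proposal is correct and follows exactly the route the paper intends: the paper provides no explicit proof for this lemma, noting only that it is analogous to Lemma~\ref{thm:optimalParameter}, and your transcription — reusing the factorization $g'_{A,B}(x)=\lambda F_1(B,x)e^{-\lambda(x-a)}$, the monotonicity of $F_1(B,\cdot)$ on $[0,a)$ and $(a,\infty)$, and the single new observation that $F_1(B,0)=B-\overline{B}_1$, which splits the analysis at $\overline{B}_1$ and pins $x_1(B)$ to the boundary $0$ when $B\ge\overline{B}_1$ — is precisely the intended adaptation. The handling of the degenerate cases ($B\ge\overline{B}_1$, $a=0$, and the vacuously satisfied interval $(0,x_1(B))$ in part (c)) is also correctly bookkept.
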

The following lemma is analogs to Lemma~\ref{lem:x_i}.
\begin{lemma}
\label{lem:x_iPos}
(a) The local minimizer  $x_1(B)$  is continuous and nonincreasing
 in $B\in(0,\infty)$.
 The local maximizer $x_2(B)$  is continuous and strictly
increasing  in $B\in(0,\infty)$.
Furthermore, (\ref{eq:x1Blimit}) holds and
\begin{equation}
  \label{eq:x2BlimitPos}
 \lim_{B\uparrow \infty}  x_1(B) = 0 \quad \text{ and } \quad
 \lim_{B\uparrow \infty}  x_2(B)=\infty.
\end{equation}
(b) For each $B\in(0,\infty)$,
\begin{equation}
  \label{eq:ghPos}
  g_{A,B}(x_2(B))= A - h(x_2(B))/\mu.
\end{equation}
 For each $B\in(0,\overline{B}_1)$,
\begin{equation}
  \label{eq:ghx1Pos}
g_{A,B}(x_1(B))= A - h(x_1(B))/\mu.
\end{equation}
 For each $B\in[\overline{B}_1,\infty)$,
\begin{equation}
  \label{eq:ghx1Pos2}
  g_{A,B}(x_1(B))= g_{A,B}(0)= A- Be^{\lambda a}+\frac{\lambda}{\mu}
  \int_0^a h(-y+a)e^{-\lambda(y-a)}dy.
\end{equation}
\end{lemma}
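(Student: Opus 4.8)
The plan is to transcribe the proof of Lemma~\ref{lem:x_i}, taking from Lemma~\ref{thm:optimalParameterPos} the existence and basic characterization of $x_1(B)$ and $x_2(B)$ through the function $F_1$ of \eqref{eq:F1}, for which $g_{A,B}'(x)=\lambda F_1(B,x)e^{-\lambda(x-a)}$ and $\frac{\partial F_1}{\partial x}(B,x)=-\frac{1}{\mu}h'(x)e^{\lambda(x-a)}$, so that $F_1(B,\cdot)$ increases on $(0,a)$ and decreases on $(a,\infty)$ with $F_1(B,a)=B>0$ and $F_1(B,0)=B-\overline{B}_1$. Thus $x_2(B)\in(a,\infty)$ is the unique zero of $F_1(B,\cdot)$ there for every $B>0$ (using convexity of $h$ to force $\int_a^xh'(y)e^{\lambda(y-a)}\,dy\to\infty$), while $x_1(B)$ is the interior zero in $(0,a)$ when $B<\overline{B}_1$ and the boundary point $0$ when $B\ge\overline{B}_1$; the single new feature relative to Section~\ref{sec:impulse-controls} is this collapse of $x_1(B)$ to the endpoint. (Note $\overline{B}_1>0$ iff $a>0$; when $a=0$, $x_1\equiv 0$ and the $B<\overline{B}_1$ clauses below are vacuous.)

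For part~(a), on $(0,\overline{B}_1)$ the Implicit Function Theorem applied to $F_1(B,x_1(B))=0$ yields $x_1'(B)=\mu/\bigl(h'(x_1(B))e^{\lambda(x_1(B)-a)}\bigr)<0$ since $h'(x_1(B))<0$, and on $[\overline{B}_1,\infty)$ one has $x_1(B)\equiv 0$; since $F_1(\overline{B}_1,0)=0$ gives $x_1(B)\to 0$ as $B\uparrow\overline{B}_1$, $x_1$ is continuous and nonincreasing on $(0,\infty)$. Likewise $x_2'(B)=\mu/\bigl(h'(x_2(B))e^{\lambda(x_2(B)-a)}\bigr)>0$, so $x_2$ is continuous and strictly increasing. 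For the limits, as $B\downarrow 0$ the function $F_1(B,\cdot)$ converges to $F_1(0,\cdot)$, which is strictly negative at every $x\ne a$ and vanishes at $a$; since $x_1(B)$ is nondecreasing and $x_2(B)$ nonincreasing as $B\downarrow 0$, both sandwiched near $a$, their monotone limits must be $a$, giving \eqref{eq:x1Blimit}. Then $\lim_{B\uparrow\infty}x_1(B)=0$ is immediate from $x_1(B)\equiv 0$ on $[\overline{B}_1,\infty)$; and if $x_2(B)\le M$ for all $B$ then $0=F_1(B,x_2(B))\ge F_1(B,M)=B-\frac{1}{\mu}\int_a^M h'(y)e^{\lambda(y-a)}\,dy\to\infty$, a contradiction, so $x_2(B)\to\infty$.

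For part~(b), whenever $x_i(B)$ solves the first-order equation \eqref{eq:xiBequation} --- always for $i=2$, and for $i=1$ when $B<\overline{B}_1$ --- I would substitute $Be^{-\lambda(x_i(B)-a)}=\frac{1}{\mu}\int_a^{x_i(B)}h'(y)e^{\lambda(y-a)}\,dy$ into the formula \eqref{eq:g1} for $g_{A,B}$ and integrate by parts exactly as in the proof of Lemma~\ref{lem:x_i}(b), obtaining $g_{A,B}(x_i(B))=A-h(x_i(B))/\mu$; this is \eqref{eq:ghPos} and \eqref{eq:ghx1Pos}. For $B\ge\overline{B}_1$ one instead evaluates \eqref{eq:g1} at $x=0$ directly, yielding $g_{A,B}(0)=A-Be^{\lambda a}+\frac{\lambda}{\mu}\int_0^a h(a-y)e^{-\lambda(y-a)}\,dy$, which is \eqref{eq:ghx1Pos2}.

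The main --- essentially the only --- obstacle is the bookkeeping around the threshold $\overline{B}_1$: verifying that the half-line minimizer genuinely sits at $x_1(B)=0$ once $B\ge\overline{B}_1$, that $x_1$ stays continuous across $B=\overline{B}_1$, and that the representation of $g_{A,B}(x_1(B))$ switches there from the interior identity $A-h(x_1(B))/\mu$ to the boundary value \eqref{eq:ghx1Pos2}. Everything else is a line-by-line restriction of Lemmas~\ref{thm:optimalParameter} and~\ref{lem:x_i} to $[0,\infty)$.
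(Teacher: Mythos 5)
Your proposal is correct and follows essentially the same route as the paper: restrict the Section~\ref{sec:impulse-controls} arguments (Implicit Function Theorem, the first-order condition, the integration by parts) to $[0,\infty)$ and handle the collapse of $x_1(B)$ to the boundary $0$ at $B=\overline{B}_1$. One small slip in the write-up: the substitution you quote, $Be^{-\lambda(x_i(B)-a)}=\frac{1}{\mu}\int_a^{x_i(B)}h'(y)e^{\lambda(y-a)}\,dy$, has the RHS equal to $B$ by the first-order condition, so the exponential factor is missing there; what you actually substitute (after the change of variables $y\mapsto x_i(B)-y+a$) is $Be^{-\lambda(x_i(B)-a)}=\frac{1}{\mu}\int_a^{x_i(B)}h'(x_i(B)-y+a)e^{-\lambda(y-a)}\,dy$, and the integration by parts then proceeds as in the paper's proof of \eqref{eq:gh}.
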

\begin{proof}
(a) Note that $x_1(B)=0$ for $B\in[\overline{B}_1,\infty)$. Thus,
$x_1(B)$ is continuous for $B\in(\overline{B}_1,\infty)$.
It follows the proof of Lemma~\ref{lem:x_i}  that $x_1(B)$ is
continuously differentiable in $B\in(0,\overline{B}_1)$,
and $x_2(B)$ is continuously differential in $B\in(0,\infty)$.
One can easily check that  $x_1(B)$ is continuous at
$B=\overline{B}_1$ and that $x_i(B)$ has the desired monotonicity
property for $i=1, 2$. Limits in (\ref{eq:x1Blimit}) can be obtained similarly as
in Lemma~\ref{lem:x_i}.  The limit in the left side of
(\ref{eq:x2BlimitPos}) follows from $x_1(B)=0$ for $B\in(\overline{B}_1,\infty)$.
The limit in the right side of (\ref{eq:x2BlimitPos}) follows from equation (\ref{eq:xiBequation})
for the definition of $x_2(B)$.

(b) Equations \eqref{eq:ghPos} and \eqref{eq:ghx1Pos} follow from the
proof for \eqref{eq:gh}. For $B\in[\overline{B}_1,\infty)$,
$x_1(B)=0$. Thus, \eqref{eq:ghx1Pos2} follows from (\ref{eq:g1}).
\end{proof}
Recall the definition $\tilde g(B)$ in \eqref{eq:tildeg} of
Lemma~\ref{lem:B1}. This time $\tilde g(B)$ is well defined
for $B\in(0,\infty)$. Recall also the definition of
$\underline{A}(B)$ in \eqref{eq:AunderlineB} and
$\overline{A}(B)$ in \eqref{eq:AoverlineB}. We have the following
lemma that is analogous to Lemma \ref{lem:B1}.
\begin{lemma}
\label{lem:x_i nonnegative}
The function $\tilde {g}(B)$ is independent of $A$. It is  continuous
and strictly increasing on $B\in(0,\infty)$. Furthermore,
\begin{eqnarray*}
 \lim_{B\downarrow 0}\tilde{g}(B)=0 \quad \text{ and } \quad
 \lim_{B\uparrow \infty}\tilde{g}(B)=\infty. \label{eq:lim
   tria g nonnegative}
\end{eqnarray*}
Therefore there exists $\underline{B}_1\in (0,\infty)$ such
that  \eqref{eq:B1} holds. Furthermore, for $B\in(\underline{B}_1,\infty)$,
\eqref{eq:underlineAlessoverlineA} holds.
\end{lemma}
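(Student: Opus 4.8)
The plan is to run the argument of Lemma~\ref{lem:B1} essentially verbatim, tracking the one structural change dictated by Lemma~\ref{thm:optimalParameterPos}: in the no-backlog setting the local minimizer is pinned at the origin, $x_1(B)\equiv 0$, once $B\ge\overline B_1$, so the quantity $\tilde g(B)=g_{A,B}(x_2(B))-g_{A,B}(x_1(B))$ from \eqref{eq:tildeg} is now defined for all $B\in(0,\infty)$ but changes character at $B=\overline B_1$. The $A$-independence is immediate: by \eqref{eq:gABg0B}, $\tilde g(B)=g_{0,B}(x_2(B))-g_{0,B}(x_1(B))$, and $x_1(B),x_2(B)$ themselves do not depend on $A$ (Lemma~\ref{thm:optimalParameterPos}(a)). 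Continuity of $\tilde g$ on $(0,\infty)$—including across $B=\overline B_1$—follows from the continuity of $x_1(B)$ and $x_2(B)$ asserted in Lemma~\ref{lem:x_iPos}(a) and the joint continuity of $(B,x)\mapsto g_{0,B}(x)$.

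For strict monotonicity I would differentiate $\tilde g$ separately on $(0,\overline B_1)$ and on $(\overline B_1,\infty)$, using Lemma~\ref{lem:x_iPos}(b). On $(0,\infty)$ we have $g_{0,B}(x_2(B))=-h(x_2(B))/\mu$, and on $(0,\overline B_1)$ also $g_{0,B}(x_1(B))=-h(x_1(B))/\mu$; differentiating with $\tfrac{d x_i}{dB}=\mu/\bigl(h'(x_i(B))e^{\lambda(x_i(B)-a)}\bigr)$ (as in Lemma~\ref{lem:x_i}) gives, on $(0,\overline B_1)$, $\tilde g'(B)=e^{-\lambda(x_1(B)-a)}-e^{-\lambda(x_2(B)-a)}>0$ since $x_1(B)<a<x_2(B)$. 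On $(\overline B_1,\infty)$ one has $x_1(B)\equiv 0$, so by \eqref{eq:g1} $g_{0,B}(x_1(B))=g_{0,B}(0)=-Be^{\lambda a}+\tfrac{\lambda}{\mu}\int_0^a h(a-y)e^{-\lambda(y-a)}\,dy$ is affine in $B$ with slope $-e^{\lambda a}$, which together with $\tfrac{d}{dB}g_{0,B}(x_2(B))=-e^{-\lambda(x_2(B)-a)}$ yields $\tilde g'(B)=e^{\lambda a}-e^{-\lambda(x_2(B)-a)}>0$ because $x_2(B)>a\ge 0$ and $\lambda>0$. (The two expressions agree, both being $e^{-\lambda(x_1(B)-a)}-e^{-\lambda(x_2(B)-a)}$.) A function that is continuous on $(0,\infty)$ and strictly increasing on each of $(0,\overline B_1)$ and $(\overline B_1,\infty)$ is strictly increasing on $(0,\infty)$.

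For the limits: as $B\downarrow 0$, \eqref{eq:x1Blimit} gives $x_1(B),x_2(B)\to a$, and feeding this into \eqref{eq:g1} shows $g_{0,B}(x_i(B))\to g_{0,0}(a)=0$, so $\tilde g(B)\to 0$. As $B\uparrow\infty$, \eqref{eq:x2BlimitPos} gives $x_2(B)\to\infty$, hence $e^{-\lambda(x_2(B)-a)}\to 0$ and $\tilde g'(B)\to e^{\lambda a}\ge 1$; fixing $B_0$ past which $\tilde g'\ge\tfrac12$ and integrating gives $\tilde g(B)\ge\tilde g(B_0)+\tfrac12(B-B_0)\to\infty$. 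Continuity, strict monotonicity and these two limits give, by the intermediate value theorem, a unique $\underline B_1\in(0,\infty)$ with $\tilde g(\underline B_1)=k+\ell$, i.e.\ \eqref{eq:B1}. Finally, exactly as at the close of Lemma~\ref{lem:B1}, the definitions \eqref{eq:AunderlineB} and \eqref{eq:AoverlineB} give $\overline A(B)-\underline A(B)=\tilde g(B)-(k+\ell)$, so strict monotonicity of $\tilde g$ forces $\underline A(B)<\overline A(B)$, i.e.\ \eqref{eq:underlineAlessoverlineA}, for every $B\in(\underline B_1,\infty)$.

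The only genuinely new point relative to Lemma~\ref{lem:B1} is the regime $B\ge\overline B_1$: there $x_1(B)$ no longer runs off to $-\infty$, so the divergence $\tilde g(B)\to\infty$ has to come instead from the linear-in-$B$ growth of $-g_{0,B}(0)=Be^{\lambda a}-\mathrm{const}$, which is precisely what the derivative computation above delivers. I expect no real difficulty here, but it is the step to write with the piecewise definition of $x_1(B)$ in mind, and where one should double-check the continuity of $\tilde g$ (and of $\tilde g'$) at the junction $B=\overline B_1$.
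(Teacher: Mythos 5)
Your proof is correct and follows essentially the same route as the paper: differentiate $\tilde g$ separately on $(0,\overline B_1)$ and $(\overline B_1,\infty)$, obtain $\tilde g'=e^{-\lambda(x_1(B)-a)}-e^{-\lambda(x_2(B)-a)}$ on the first piece and $\tilde g'=e^{\lambda a}-e^{-\lambda(x_2(B)-a)}$ on the second, get the divergence as $B\uparrow\infty$ from $\tilde g'\to e^{\lambda a}>0$, and then invoke the closing argument of Lemma~\ref{lem:B1}. You supply two small details that the paper leaves implicit: the explicit check that $\tilde g$ (indeed $\tilde g'$) is continuous across the junction $B=\overline B_1$, and the integration step that turns the derivative bound into $\tilde g(B)\to\infty$; both are worth writing and fix a typo in the paper's statement of the derivative limit.
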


\begin{proof}
First, we prove $\tilde g$ is strictly increasing.
For $B\in (0,\overline{B}_1)$, the expression for
$\frac{d \tilde{g}(B)}{d B}$ is identical to the one in \eqref{eq:d
  wide(g)/dB}. For $B\in(\overline{B}_1,\infty)$
\begin{eqnarray}
\frac{d \tilde{g}(B)}{d B}
=-\frac{1}{\mu}h'(x_2(B))\frac{dx_2(B)}{dB}+e^{\lambda a}=
-e^{-\lambda(x_2(B)-a)}+e^{\lambda a} >0.
 \label{eq:d wide(g)/dB-2}
\end{eqnarray}
Thus, $\tilde g$ is strictly increasing.

Next we  prove  $\lim_{B\uparrow  \infty}\tilde{g}(B)=\infty$. We observe
that  (\ref{eq:d wide(g)/dB-2}) and \eqref{eq:x2BlimitPos}  imply that
$\lim_{B\uparrow -\infty}\frac{d \tilde{g}(B)}{d B}=e^{\lambda a}>0$,
from which we have  $\lim_{B\uparrow \infty}\tilde{g}(B)=\infty$.

The remaining proof of the lemma is identical to that of Lemma~\ref{lem:B1}.

\end{proof}

With Lemma \ref{lem:x_i nonnegative} replacing Lemma~\ref{lem:B1},
Lemmas~\ref{lem:Uu} and
\ref{lem:Lambda2continuity}  hold
without any modification.

\begin{lemma} \label{lem:B2}
The function $\Lambda_2(\overline{A}(B), B)$ is continuous and
strictly increasing in  $B\in (\underline{B}_1,\infty)$.
Furthermore,
\begin{eqnarray}
  \label{eq:Lambda2overlineA1}
&&\lim_{B\downarrow \underline{B}_1}  \Lambda_2(\overline{A}(B), B)=0, \\
&& \lim_{B\uparrow \infty}  \Lambda_2(\overline{A}(B), B)=\infty.
  \label{eq:Lambda2overlineA}
\end{eqnarray}
Therefore,
 there exists a unique
$\underline{B}_2\in (\underline{B}_1,\infty)$ such that (\ref{eq:B2}) holds.
\end{lemma}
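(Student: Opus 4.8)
The plan is to follow the proof of Lemma~\ref{lem:B1B2} almost verbatim, replacing the finite right endpoint $\overline{B}$ there by $+\infty$ and substituting the no-backlog analogs already established: Lemma~\ref{lem:x_iPos} for Lemma~\ref{thm:optimalParameter}, Lemma~\ref{lem:x_i nonnegative} for Lemma~\ref{lem:B1}, and the (unmodified) Lemmas~\ref{lem:Uu} and~\ref{lem:Lambda2continuity}. Once continuity, strict monotonicity of $B\mapsto\Lambda_2(\overline{A}(B),B)$ on $(\underline{B}_1,\infty)$, and the two limits \eqref{eq:Lambda2overlineA1}--\eqref{eq:Lambda2overlineA} are in hand, the intermediate value theorem yields a unique $\underline{B}_2\in(\underline{B}_1,\infty)$ with $\Lambda_2(\overline{A}(\underline{B}_2),\underline{B}_2)=L$, and strict monotonicity upgrades this to \eqref{eq:B2}.

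\textbf{Monotonicity.} First I would record that $\frac{d\overline{A}(B)}{dB}=e^{-\lambda(x_1(B)-a)}$ for \emph{all} $B\in(\underline{B}_1,\infty)$. For $B\in(\underline{B}_1,\overline{B}_1)$ this is exactly \eqref{eq:d(A)/dB}, since there $x_1(B)$ still solves \eqref{eq:xiBequation}; for $B\ge\overline{B}_1$, where $x_1(B)=0$, I would instead use $\overline{A}(B)=-k-g_{0,B}(x_1(B))=-k-g_{0,B}(0)$ from \eqref{eq:AoverlineB} together with \eqref{eq:ghx1Pos2} and differentiate directly, getting $\frac{d\overline{A}(B)}{dB}=e^{\lambda a}=e^{-\lambda(0-a)}$, with agreement at $B=\overline{B}_1$ by continuity. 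With this identity and the Implicit Function Theorem, differentiating $\Lambda_2(\overline{A}(B),B)=\int_{U(\overline{A}(B),B)}^{u(\overline{A}(B),B)}[g_{\overline{A}(B),B}(x)-\ell]\,dx$ exactly as in \eqref{eq:dLamma/da} makes the two boundary terms vanish (because $g_{\overline{A}(B),B}$ equals $\ell$ at both $U(\overline{A}(B),B)$ and $u(\overline{A}(B),B)$), leaving
\[
  \frac{d}{dB}\Lambda_2(\overline{A}(B),B)=\int_{U(\overline{A}(B),B)}^{u(\overline{A}(B),B)}\bigl[e^{-\lambda(x_1(B)-a)}-e^{-\lambda(x-a)}\bigr]\,dx>0,
\]
the positivity being immediate since $\lambda>0$ and $x_1(B)<U(\overline{A}(B),B)\le x\le u(\overline{A}(B),B)$; the strict inequality $U(\overline{A}(B),B)>x_1(B)$ holds because $g_{\overline{A}(B),B}(x_1(B))=-k<\ell$ and $g_{\overline{A}(B),B}$ is increasing on $(x_1(B),x_2(B))$. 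Continuity of $\Lambda_2(\overline{A}(B),B)$ follows from that of $\overline{A}$, $U$, $u$ and $g$.

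\textbf{Limit at $\underline{B}_1$.} Since $\tilde g(\underline{B}_1)=k+\ell$ by Lemma~\ref{lem:x_i nonnegative}, one has $\overline{A}(\underline{B}_1)-\underline{A}(\underline{B}_1)=\tilde g(\underline{B}_1)-(k+\ell)=0$, so $\overline{A}(B)\downarrow\underline{A}(\underline{B}_1)$ as $B\downarrow\underline{B}_1$; by the defining relations for $U(A,B)$, $u(A,B)$ and the continuity argument in Lemma~\ref{lem:Lambda2continuity}, both endpoints converge to $x_2(\underline{B}_1)$, hence $\Lambda_2(\overline{A}(B),B)\to0$, which is \eqref{eq:Lambda2overlineA1}.

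\textbf{Limit at $+\infty$ (the main obstacle).} This is where the no-backlog case genuinely departs from Lemma~\ref{lem:B1B2}: there $x_1(B)\to-\infty$ was used to drive $\overline{A}(B)\to\infty$, but here $x_1(B)\to0$. Nevertheless, from \eqref{eq:ghx1Pos2} and \eqref{eq:AoverlineB}, for $B\ge\overline{B}_1$ one has $\overline{A}(B)=-k+Be^{\lambda a}-\tfrac{\lambda}{\mu}\int_0^a h(-y+a)e^{-\lambda(y-a)}\,dy$, which still tends to $\infty$, now linearly in $B$. Using the monotonicity $B\mapsto U(\overline{A}(B),B)$ decreasing and $B\mapsto u(\overline{A}(B),B)$ increasing (from formulas analogous to \eqref{eq:dU/dA}--\eqref{eq:du/dA}), fix any $B_0\in(\max\{\underline{B}_1,\overline{B}_1\},\infty)$ and put $U_0=U(\overline{A}(B_0),B_0)$, $u_0=u(\overline{A}(B_0),B_0)$; since the integrand is nonnegative on $[U(\overline{A}(B),B),u(\overline{A}(B),B)]\supseteq[U_0,u_0]$ for $B\ge B_0$,
\[
  \Lambda_2(\overline{A}(B),B)\ge\int_{U_0}^{u_0}\bigl[\overline{A}(B)+g_{0,B}(x)-\ell\bigr]\,dx .
\]
Substituting $g_{0,B}(x)=g_{0,B_0}(x)-(B-B_0)e^{-\lambda(x-a)}$ (immediate from the definition of $g_{0,B}$), the right-hand side is affine in $B$ for $B\ge\overline{B}_1$ with slope $e^{\lambda a}(u_0-U_0)-\int_{U_0}^{u_0}e^{-\lambda(x-a)}\,dx$, which is strictly positive because $U_0>x_1(\underline{B}_1)\ge0$ forces $e^{-\lambda(x-a)}<e^{\lambda a}$ for every $x\in[U_0,u_0]$. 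Hence $\Lambda_2(\overline{A}(B),B)\to\infty$, giving \eqref{eq:Lambda2overlineA}, and combining the three steps with the intermediate value theorem and strict monotonicity proves the existence and uniqueness of $\underline{B}_2$ together with \eqref{eq:B2}.
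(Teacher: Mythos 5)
Your proof is correct and follows the same route the paper takes: inherit the Lemma~\ref{lem:B1B2} argument wholesale and only supply a new proof for the limit \eqref{eq:Lambda2overlineA}, which is where the no-backlog case ($x_1(B)\to 0$ rather than $-\infty$) genuinely differs. The only implementation difference is in that last step: the paper argues via $\lim_{B\uparrow\infty}\partial\Lambda_2(\overline{A}(B),B)/\partial B>0$ (the derivative $\int_{U}^{u}[e^{\lambda a}-e^{-\lambda(x-a)}]\,dx$ is increasing and positive once $x_1(B)=0$, hence bounded away from zero), whereas you freeze $B_0$, shrink the integration interval to $[U_0,u_0]$, substitute $g_{0,B}(x)=g_{0,B_0}(x)-(B-B_0)e^{-\lambda(x-a)}$, and obtain an explicit affine-in-$B$ lower bound with slope $e^{\lambda a}(u_0-U_0)-\int_{U_0}^{u_0}e^{-\lambda(x-a)}\,dx>0$. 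Both rest on the same two facts — $\overline{A}(B)$ grows linearly with slope $e^{\lambda a}$ once $x_1(B)\equiv 0$, and $e^{-\lambda(x-a)}<e^{\lambda a}$ for $x>0$ — so the content is identical; your version is just the integrated form of the paper's derivative bound, and is if anything a bit more explicit than the paper's terse sketch.
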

\begin{proof}
The proof of this lemma is identical to the proof of
Lemma~\ref{lem:B1B2} except that we need to prove
\eqref{eq:Lambda2overlineA}.

To prove \eqref{eq:Lambda2overlineA}, we follow the expression in
\eqref{eq:dLamma/da}
for $\frac{\partial \Lambda_2(\overline{A}(B),B)}{\partial B}$.
By \eqref{eq:dU/dA} and \eqref{eq:du/dA}, we know that
$ U(\overline{A}(B),B)$ decreases in $B$ and $
u(\overline{A}(B),B)$ increases in $B$. Also we know that $x_1(B)=0$
for $B\in (\overline{B}_1,\infty)$. Following
the expression in
\eqref{eq:dLamma/da} and these facts, we have
\begin{displaymath}
 \lim_{B\uparrow\infty}\frac{\partial
   \Lambda_2(\overline{A}(B),B)}{\partial B}>0,
\end{displaymath}
which implies that \eqref{eq:Lambda2overlineA}.
\end{proof}

Lemma \ref{lem:Lambda2continuity} and Lemma \ref{lem:B2} immediately
gives the following lemma.
\begin{lemma}
  \label{lem:AstarPos}
For each $B\in [\underline{B}_2,\infty)$, there exists a
unique $A^*(B)\in (\underline{A}(B), \overline{A}(B)]$ such that
\eqref{eq:Astar} holds.
\end{lemma}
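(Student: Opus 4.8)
The plan is to obtain Lemma~\ref{lem:AstarPos} as a one-line application of the intermediate value theorem, exactly mirroring Lemma~\ref{lem:Astar} in the backlogged setting. Fix $B\in[\underline{B}_2,\infty)$ and regard $A\mapsto\Lambda_2(A,B)$ as a function on the half-open interval $(\underline{A}(B),\overline{A}(B)]$; the claim is that the value $L$ is attained exactly once on this interval.

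The two ingredients are already established. By Lemma~\ref{lem:Lambda2continuity}---which, per the remark that Lemmas~\ref{lem:Uu} and \ref{lem:Lambda2continuity} carry over verbatim once Lemma~\ref{lem:x_i nonnegative} replaces Lemma~\ref{lem:B1}, is valid here for every $B\in(\underline{B}_1,\infty)\supseteq[\underline{B}_2,\infty)$---the map $A\mapsto\Lambda_2(A,B)$ is continuous and strictly increasing on $(\underline{A}(B),\overline{A}(B))$ and satisfies $\lim_{A\downarrow\underline{A}(B)}\Lambda_2(A,B)=0$; since $U(A,B)$, $u(A,B)$ and $g_{A,B}$ remain continuous at $A=\overline{A}(B)$, the integral representation~\eqref{eq:Lambda_2AB} shows $\Lambda_2(\cdot,B)$ extends continuously to the right endpoint. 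By Lemma~\ref{lem:B2}, together with the defining property~\eqref{eq:B2} of $\underline{B}_2$, the endpoint value obeys $\Lambda_2(\overline{A}(B),B)\ge L$ for all $B\in[\underline{B}_2,\infty)$, with equality precisely at $B=\underline{B}_2$ and strict inequality for $B>\underline{B}_2$.

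I would then conclude as follows. Because we are in the impulse regime with $L>0$, we have $\lim_{A\downarrow\underline{A}(B)}\Lambda_2(A,B)=0<L\le\Lambda_2(\overline{A}(B),B)$, so continuity of $\Lambda_2(\cdot,B)$ on $(\underline{A}(B),\overline{A}(B)]$ produces some $A^*(B)$ in that interval with $\Lambda_2(A^*(B),B)=L$, and strict monotonicity makes it unique; the right endpoint $A^*(B)=\overline{A}(B)$ occurs only when $B=\underline{B}_2$, which is why the interval in the statement is closed on the right. There is essentially no obstacle here---the analytical work is entirely contained in the two cited lemmas---so the only points requiring care are (i) that $\Lambda_2(\cdot,B)$ is defined and continuous up to and including $\overline{A}(B)$, not merely on the open interval stated in Lemma~\ref{lem:Lambda2continuity}, and (ii) that the borderline case $B=\underline{B}_2$ is exactly what forces the closed endpoint in the conclusion.
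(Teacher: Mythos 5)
Your proposal is correct and takes the same route as the paper, which in fact gives no explicit proof and simply states that Lemma~\ref{lem:Lambda2continuity} and Lemma~\ref{lem:B2} immediately imply the result (mirroring the derivation of Lemma~\ref{lem:Astar}). The only points you flag for care---continuity of $\Lambda_2(\cdot,B)$ up to the right endpoint $\overline{A}(B)$, and the fact that the closed endpoint is needed precisely for $B=\underline{B}_2$---are exactly the implicit details the paper is eliding, and your handling of them is sound.
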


Finally, the following lemma gives a proof of Theorem
\ref{lem:optimalParameter nonnegative}.
\begin{lemma}
\label{lem:d D nonnegative}
There exist unique $B^*$ with $B^*\in (\underline{B}_2,\infty)$, $D^*$, $d^*$ and $\alpha^*$ such that
\begin{eqnarray*}
&&g_{A^*(B^*),B^*}(D^*))=-k,\\
&&g_{A^*(B^*),B^*}(d^*)=-k-\alpha^*,\\
&&\int_{d^*}^{D^*}\Big[g_{A^*(B^*),B^*}(x)+k\Big]dx=-K,\\
&&\alpha^* d^*=0,\ \ and \\
&& d^*\geq 0.
\end{eqnarray*}
where $\alpha^*\geq 0$.
\end{lemma}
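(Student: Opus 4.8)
The plan is to run the argument of Lemma~\ref{lem:d D} essentially verbatim, the only genuinely new ingredient being the bookkeeping forced by the state constraint $d\ge 0$, which shows up as the multiplier $\alpha$ together with the complementary-slackness relation $\alpha d=0$. Throughout I abbreviate $g=g_{A^*(B),B}$, where $A^*(B)$ is supplied by Lemma~\ref{lem:AstarPos}.

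First I would construct, for each $B\in(\underline{B}_2,\infty)$, the upper boundary $D(B)$ and the pair $(d(B),\alpha(B))$. By Lemma~\ref{lem:AstarPos} and the strict inequality in \eqref{eq:B2}, $A^*(B)\in(\underline{A}(B),\overline{A}(B))$, so \eqref{eq:gless-k} gives $g(x_1(B))<-k$; since also $g(U(A^*(B),B))=\ell>-k$, the shape of $g$ from Lemma~\ref{thm:optimalParameterPos} yields a unique $D(B)\in(x_1(B),U(A^*(B),B))$ with $g(D(B))=-k$, $g'(D(B))>0$. For the lower boundary I split on the sign of $g(0)+k$: if $g(0)>-k$ then $g$ falls from a value $>-k$ at $0$ to $g(x_1(B))<-k$, giving a unique $d(B)\in(0,x_1(B))$ with $g(d(B))=-k$, $g'(d(B))<0$, and I set $\alpha(B)=0$; if $g(0)\le-k$ I set $d(B)=0$ and $\alpha(B)=-k-g(0)\ge0$. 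In either case $\alpha(B)\ge0$, $\alpha(B)\,d(B)=0$, $d(B)\in[0,x_1(B)]$, and $g(x)<-k$ for $x$ strictly between $d(B)$ and $D(B)$.

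Next, with $\Lambda_1(A^*(B),B)=\int_{d(B)}^{D(B)}[g(x)+k]\,dx\le0$ as in \eqref{eq:Lambda1Astar}, I would prove continuity and strict monotonicity in $B$. Continuity of $A^*(B),x_1(B),U(A^*(B),B),D(B)$ and of $g_{A^*(B),B}(0)$ (Implicit Function Theorem applied respectively to \eqref{eq:Astar}, to \eqref{eq:xiBequation}, to the equation defining $U$, and to $g_{A^*(B),B}(D)=-k$) makes $\alpha(B),d(B)$ and hence $\Lambda_1$ continuous, including across the threshold where the two cases above meet. Differentiating, the endpoint contributions vanish ($g+k=0$ at $D(B)$; at $d(B)$ either $g+k=0$ or $d(\cdot)\equiv0$), so $\frac{d}{dB}\Lambda_1(A^*(B),B)=\int_{d(B)}^{D(B)}\bigl(\frac{dA^*(B)}{dB}-e^{-\lambda(x-a)}\bigr)\,dx$; by \eqref{eq:dA/dB}, $\frac{dA^*(B)}{dB}$ is a weighted average of $e^{-\lambda(y-a)}$ over $y\in[U(A^*(B),B),u(A^*(B),B)]$, and for $x<D(B)<U(A^*(B),B)$ the value $e^{-\lambda(x-a)}$ strictly exceeds that average, so the integrand is negative and $\Lambda_1$ strictly decreases. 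At $B=\underline{B}_2$, \eqref{eq:B2} and Lemma~\ref{lem:AstarPos} give $A^*(\underline{B}_2)=\overline{A}(\underline{B}_2)$, whence by the definition \eqref{eq:AoverlineB} of $\overline{A}$ one has $g_{A^*(\underline{B}_2),\underline{B}_2}(x_1(\underline{B}_2))=-k$, so $d(\underline{B}_2)=D(\underline{B}_2)=x_1(\underline{B}_2)$ and $\lim_{B\downarrow\underline{B}_2}\Lambda_1(A^*(B),B)=0$.

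The main obstacle is the remaining limit, $\lim_{B\uparrow\infty}\Lambda_1(A^*(B),B)=-\infty$, because the divergence mechanism differs from that of Lemma~\ref{lem:d D}: there $d(B)\to-\infty$ stretched the integration interval, whereas here $d(B)\in[0,x_1(B)]$ is bounded (and equals $0$ once $B\ge\overline{B}_1$), so the blow-up must be carried by the integrand. The key is $\alpha(B)=\overline{A}(B)-A^*(B)\to+\infty$: indeed $x_2(B)=O(\log B)$ (crudely, from the defining equation \eqref{eq:xiBequation}, $B=\tfrac1\mu\int_a^{x_2(B)}h'(y)e^{\lambda(y-a)}\,dy\ge\tfrac{h'(a+1)}{\mu\lambda}\bigl(e^{\lambda(x_2(B)-a)}-e^{\lambda}\bigr)$ using condition (c) of Assumption~\ref{assumption:h}), which by convexity of $h$ makes $\underline{A}(B)=h(x_2(B))/\mu+\ell=o(B)$, while the parabolic behaviour of $g_{A,B}$ near its maximum (where $g''_{A,B}(x_2(B))=-\tfrac{\lambda}{\mu}h'(x_2(B))$) together with the fixed bump area $L$ forces $A^*(B)-\underline{A}(B)=o(B)$, so $\alpha(B)=\overline{A}(B)-A^*(B)=Be^{\lambda a}-A^*(B)+O(1)\sim Be^{\lambda a}$. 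Then the elementary bound $g'_{A^*(B),B}(x)\le\lambda Be^{\lambda a}$ on $[0,\infty)$ (from $g'=\lambda F_1(B,\cdot)e^{-\lambda(\cdot-a)}$ with $F_1(B,\cdot)\le F_1(B,a)=B$ and $e^{-\lambda(x-a)}\le e^{\lambda a}$) shows $g_{A^*(B),B}(x)+k\le-\tfrac12\alpha(B)$ on $[0,x^\sharp(B)]$ with $x^\sharp(B)\ge\alpha(B)/(2\lambda Be^{\lambda a})$, hence $\Lambda_1(A^*(B),B)\le-\tfrac12\alpha(B)\,x^\sharp(B)\le-\alpha(B)^2/(4\lambda Be^{\lambda a})\to-\infty$. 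Given continuity, strict monotonicity, $\Lambda_1|_{B=\underline{B}_2}=0$, and $\Lambda_1\to-\infty$, there is a unique $B^*\in(\underline{B}_2,\infty)$ with $\Lambda_1(A^*(B^*),B^*)=-K$; then $d^*=d(B^*)$, $D^*=D(B^*)$, $\alpha^*=\alpha(B^*)$ satisfy all the asserted relations (with $g(d^*)=-k-\alpha^*$, $g(D^*)=-k$, $\alpha^*d^*=0$, $\alpha^*\ge0$) by construction.
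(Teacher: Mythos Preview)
Your argument tracks the paper's proof closely through the construction of $D(B)$, $(d(B),\alpha(B))$, the continuity and strict monotonicity of $\Lambda_1(A^*(B),B)$, and the limit at $\underline{B}_2$; these parts are essentially identical. The genuine divergence is in establishing $\lim_{B\uparrow\infty}\Lambda_1(A^*(B),B)=-\infty$.

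The paper's route here is shorter and avoids your growth estimates entirely: once $d(B)\equiv 0$ (for $B\ge\overline{B}_1\wedge\underline{B}_4$) and the boundary terms vanish, one has
\[
\frac{d\Lambda_1(A^*(B),B)}{dB}=\int_0^{D(B)}\Bigl[\tfrac{dA^*(B)}{dB}-e^{-\lambda(x-a)}\Bigr]\,dx
\le \int_0^{D(B)}\bigl[e^{-\lambda(D(B)-a)}-e^{-\lambda(x-a)}\bigr]\,dx,
\]
and since $D(B)$ is increasing this last integral is decreasing in $B$ and already strictly negative at any fixed $B_0$. Hence $\tfrac{d\Lambda_1}{dB}$ is eventually bounded above by a negative constant, which forces $\Lambda_1\to-\infty$ without ever having to quantify $\alpha(B)$, $\underline{A}(B)$, or the bump height.

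Your route instead estimates $\Lambda_1$ directly by showing $\alpha(B)\sim Be^{\lambda a}$. The pieces $\overline{A}(B)\sim Be^{\lambda a}$ and $\underline{A}(B)=o(B)$ are fine (the latter follows from $\mu B=\int_a^{x_2(B)}h'(y)e^{\lambda(y-a)}dy\ge e^{\lambda(x_2(B)-a)/2}\cdot\tfrac12 h(x_2(B))$ via convexity, giving $h(x_2(B))=o(B)$). The weak link is the step ``the parabolic behaviour of $g_{A,B}$ near its maximum \ldots\ forces $A^*(B)-\underline{A}(B)=o(B)$'': a Taylor approximation at $x_2(B)$ is only a heuristic, since nothing guarantees the bump $[U,u]$ lies in a neighborhood where the quadratic model is valid. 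The conclusion is nevertheless correct and can be made rigorous by the same derivative bound you already use below: from $g'\le\lambda Be^{\lambda a}$ one gets $g(x)-\ell\ge H-\lambda Be^{\lambda a}(x_2(B)-x)$ on $[x_2(B)-H/(\lambda Be^{\lambda a}),\,x_2(B)]\subset[U,u]$, so $L\ge H^2/(2\lambda Be^{\lambda a})$ and hence $H=A^*(B)-\underline{A}(B)=O(\sqrt{B})=o(B)$. With this patch, your $x^{\sharp}$-argument goes through. So your approach is valid but more laborious; the paper's derivative-bound argument is the cleaner way to finish.
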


\begin{proof}
For any $B\in (\underline{B}_2,\infty)$, $A^*(B)<  \overline{A}(B)$.
Therefore, \eqref{eq:gless-k} implies that
$g_{A^*(B),B}(x_1(B))< -k$.
Thus, there exists a unique $D(B)$ such that
\begin{equation}
  \label{eq:DBPos}
D(B)>0,\quad g_{A^*(B),B}(D(B))=-k,\quad g_{A^*(B),B}'(D(B))>0.
\end{equation}

If $x_1(B)>0$ and  $g_{A^*(B),B}(0)> -k$, then there exists a unqiue
 $d(B)$ such that
 \begin{equation}
   \label{eq:dBPos}
d(B)\ge 0,\quad g_{A^*(B),B}(d(B))=-k,\quad g_{A^*(B),B}'(d(B))<0.
 \end{equation}
Inequality \eqref{eq:dx_1/dB} shows that $x_1(B)$ is strictly decreasing
in $B\in (0, \overline{B})$. Also for $B\in (\underline{B}_2,\infty)$, \eqref{eq:dA/dB} implies that
\begin{eqnarray}
\frac{\partial g_{A^*(B),B}(0)}{\partial B}
=\frac{\int_{U(B)}^{u(B)}\Big[e^{-\lambda(x-a)}-e^{\lambda a}\Big]dx}{u(B)-U(B)}
<0.\label{eq:dg(0)/dB nonnegative}
\end{eqnarray}
Therefore, $g_{A^*(B),B}(0)$ is strictly decreasing in $B\in (\underline{B}_2,\infty)$.
Let $(\underline{B}_2, \underline{B}_4)$
be the interval over which $g_{A^*(B),B}(0)> -k$. If there is no $B$ that satisfies
$g_{A^*(B),B}(0)> -k$, then set $\underline{B}_4=\underline{B}_2$.
Thus, for $B\in (\underline{B}_2,\overline{B}_1\wedge \underline{B}_4)$,
$d(B)>0$. Otherwise,
for $B\in [\overline{B}_1\wedge \underline{B}_4,\infty)$,
 we set $d(B)=0$. The rest of the proof mimics the proof of
Lemma \ref{lem:d D}.

Define $\Lambda_1(A^*(B), B)$  as in (\ref{eq:Lambda1Astar}).
We are going to prove that
$\Lambda_1(A^*(B),B)$ is continuous and strictly decreasing in $B\in[\underline{B}_2,\infty)$
and
\begin{equation*}
  \lim_{B\downarrow \underline{B}_2} \Lambda_1(A^*(B),B)=0 \quad \text{and}
  \quad
  \lim_{B\uparrow \infty}\Lambda_1(A^*(B),B)=-\infty.
\end{equation*}
Therefore,
there exists a unique $B^*\in(\underline{B}_2,\infty)$,
such that 
\begin{eqnarray*}
\Lambda_1(A^*(B^*),B^*)=-K,
\end{eqnarray*}
from which one proves the lemma by choosing
 $A^*=A^*(B^*)$, $D^*=D(B^*)$, $d^*=d(B^*)$ and
$\alpha^*=(k+g_{A^*(B^*),B^*}(0))^-$.

We first show that $\Lambda_1(A^*(B), B)$  is continuous and strictly
decreasing in $B\in[\underline{B}_2,\infty)$. Observe that
\eqref{eq:dA/dB} continues  to hold, from which \eqref{eq:dgAstardB}
continues to hold.  We now claim that \eqref{eq:dLambda1AstardB}
continues to hold for $B\in (\underline{B}_2,\infty)$ except possibly
at $\overline{B}_1\wedge\underline{B}_4$.
Indeed, for $B\in (\underline{B}_2,\overline{B}_1\wedge\underline{B}_4)$
 \eqref{eq:dLambda1AstardB} holds as before.
For $B\in (\overline{B}_1\wedge\underline{B}_4,\infty)$,
$d(B)=0$ and  \eqref{eq:dLambda1AstardB} holds as well in this case.
This proves that $\Lambda_1(A^*(B), B)$ is continuous and decreasing
in $B$.

Next, it is easy to see that the limit
\eqref{eq:Lambda1AstarupperLimit} continues to hold as well.
It remains to prove
\begin{equation}
  \label{eq:Lambda1AstarlowerLimit}
 \lim_{B\uparrow \infty}  \Lambda_1(A^*(B), B) = -\infty.
\end{equation}
We will  prove next that
\begin{equation}
\label{eq:Lambda1Astarineq}
\lim_{B\uparrow \infty}\frac{d
  \Lambda_1(A^*(B),B)}{d B}<0,
\end{equation}
from which (\ref{eq:Lambda1AstarlowerLimit}) immediately follows.

To see (\ref{eq:Lambda1Astarineq}), using \eqref{eq:DBPos}, we have
\begin{eqnarray}
 \frac{d D(B)}{d B}
 &=&\frac{e^{-\lambda
     (D(B)-a)}-\frac{\int_{U(A^*(B),B)}^{u(A^*(B),B)}e^{-\lambda(x-a)}dx}{u(A^*(B),B)-U(A^*(B),B)}}{g_{A^*(B),B}'(D(B))}
 \label{eq:dDdBless0}\\
 &=&\frac{\int_{U(A^*(B),B)}^{u(A^*(B),B)}\Big[e^{-\lambda (D(B)-a)}-e^{-\lambda(x-a)}\Big]dx}{(u(A^*(B),B)-U(A^*(B),B)g_{A^*(B),B}'(D(B))} \nonumber\\
 &>&0. \nonumber
\end{eqnarray}
To study the limit in (\ref{eq:Lambda1Astarineq}),
we only need to consider $\Lambda_1(A^*(B),B)$  for $B\in (\overline{B}_1\wedge\underline{B}_4,\infty)$.
When $B\in (\overline{B}_1\wedge\underline{B}_4,\infty)$,
 $d(B)=0$ and hence  $\Lambda_1(A^*(B),B)
 =\int_{0}^{D(B)}\bigl[g_{A^*(B),B}(x)+k\bigr]dx$.
Therefore, for $B\in (\overline{B}_1\wedge\underline{B}_4,\infty)$,
\begin{eqnarray*}
&&\frac{d \Lambda_1(A^*(B),B)}{d B}\\
&=& \int_{0}^{D(B)} \Bigl[\frac{d A^*(B)}{d B}-e^{-\lambda(x-a)}\Bigr]dx+\frac{d D(B)}{d B} \bigl[g_{A^*(B),B}(D(B))+k\bigr]\\
&=&\int_{0}^{D(B)} \Bigl[\frac{d A^*(B)}{d B}-e^{-\lambda(x-a)}\Bigr]dx,
\end{eqnarray*}
where the last equality is due to $g_{A^*(B),B}(D(B))=-k$.
It follows from (\ref{eq:dA/dB})  that
\begin{eqnarray*}
\frac{d \Lambda_1(A^*(B),B)}{d B}
&=& \int_{0}^{D(B)} \Big[\frac{\int_{U(B)}^{u(B)}e^{-\lambda(y-a)}dy}{u(B)-U(B)}-e^{-\lambda(x-a)}\Big]dx\\
&\leq& \int_{0}^{D(B)} \Big[\frac{\int_{U(B)}^{u(B)}e^{-\lambda(D(B)-a)}dy}{u(B)-U(B)}-e^{-\lambda(x-a)}\Big]dx\\
&=& \int_{0}^{D(B)} [e^{-\lambda(D(B)-a)}-e^{-\lambda(x-a)}]dx,
\end{eqnarray*}
where the inequality follows from $D(B)<U(B)$. Inequality
(\ref{eq:dDdBless0})  implies that
\begin{eqnarray*}
\frac{d \Big(\int_{0}^{D(B)} [e^{-\lambda(D(B)-a)}-e^{-\lambda(x-a)}]dx\Big) }{d B}
=-\lambda e^{-\lambda(D(B)-a)} D(B)\frac{d D(B)}{d B}<0.
\end{eqnarray*}
For any $B_0\in(\overline{B}_1\wedge\underline{B}_4,\infty)$, we
have $\int_{0}^{D(B_0)}
[e^{-\lambda(x-a)}-e^{-\lambda(D(B_0)-a)}]dx<0$. Therefore,
\begin{eqnarray*}
\lim_{B\uparrow \infty}\frac{d \Lambda_1(A^*(B),B)}{d B}
&\leq& \lim_{B\uparrow \infty}\int_{0}^{D(B)}
[e^{-\lambda(x-a)}-e^{-\lambda(D(B)-a)}dy]dx \\
&\leq & \int_{0}^{D(B_0)}[e^{-\lambda(x-a)}-e^{-\lambda(D(B_0)-a)}]dx<0,
\end{eqnarray*}
proving
\eqref{eq:Lambda1Astarineq}.
\end{proof}

\section{Concluding Remarks}
\label{sec:conclusions}
In this paper, we have given a tutorial of the lower-bound approach to
studying the optimal control of Brownian inventory models with a
general convex holding cost function. The control can be either
impulse or singular, and the inventory can be either backlogged or
without backlog. For  future research, it would be interesting
to study multi-stage inventory systems with Brownian motion demand.  Yao
\cite{Yao10} has done a preliminary study for these systems.

\section*{Acknowledgments}
The authors thank Hanqin Zhang of Chinese Academy of Sciences and
National University of Singapore for stimulating discussions.  Part of this
work was done while the second author was visiting the School of
Industrial and Systems Engineering at Georgia Institute of Technology.
He would like to thank the hospitality of the school.

\bibliography{dai}

\def\cprime{$'$} \def\cprime{$'$} \def\cprime{$'$} \def\cprime{$'$}
  \def\cprime{$'$} \def\cprime{$'$} \def\cprime{$'$}
\begin{thebibliography}{}

\bibitem{Baccarin02}
{\sc Baccarin, S.} (2002).
\newblock Optimal impulse control for cash management with quadratic
  holding-penalty costs.
\newblock {\em Decisions in Economics and Finance\/}~\textbf{25},~1, 19--32.

\bibitem{Bar-IlanSulem95}
{\sc Bar-Ilan, A.} {\sc and} {\sc Sulem, A.} (1995).
\newblock Explicit solution of inventory problems with delivery lags.
\newblock {\em Mathematics of Operations Research\/}~\textbf{20},~3, 709--720.

\bibitem{Bather66}
{\sc Bather, J.~A.}
\newblock A continuous time inventory model.
\newblock {\em Journal of Applied Probability\/}~\textbf{3},~2.

\bibitem{Benkherouf07}
{\sc Benkherouf, L.} (2007).
\newblock On a stochastic inventory model with a generalized holding costs.
\newblock {\em European Journal of Operational Research\/}~\textbf{182},~2,
  730--737.

\bibitem{BenkheroufBensoussan09}
{\sc Benkherouf, L.} {\sc and} {\sc Bensoussan, A.} (2009).
\newblock Optimality of an {($s,S$)} policy with compound {Poisson} and
  diffusion demands: a quasi-variational inequalities approach.
\newblock {\em SIAM Journal on Control and Optimization\/}~\textbf{48},~2,
  756--762.

\bibitem{BensoussanLions75}
{\sc Bensoussan, A.} {\sc and} {\sc Lions, J.-L.} (1974/75).
\newblock Nouvelles m\'ethodes en contr\^ole impulsionnel.
\newblock {\em Applied Mathematics and Optimization\/}~\textbf{1},~4, 289--312.

\bibitem{BensoussanLions84}
{\sc Bensoussan, A.} {\sc and} {\sc Lions, J.-L.} (1984).
\newblock {\em Impulse control and quasivariational inequalities}.
\newblock Gauthier-Villars, Montrouge.
\newblock Translated from the French by J. M. Cole.

\bibitem{BensoussanLiuSethi05}
{\sc Bensoussan, A.}, {\sc Liu, R.~H.}, {\sc and} {\sc Sethi, S.~P.} (2005).
\newblock Optimality of an {$(s,S)$} policy with compound {P}oisson and
  diffusion demands: a quasi-variational inequalities approach.
\newblock {\em SIAM Journal on Control and Optimization\/}~\textbf{44},~5,
  1650--1676 (electronic).

\bibitem{BertolaCabellero90}
{\sc Bertola, G.} {\sc and} {\sc Caballero, R.~J.} (1990).
\newblock {\em Kinked Adjustment Costs and Aggregate Dynamics}.
\newblock MIT Press, 237--296.

\bibitem{BuckleyKorn89}
{\sc Buckley, I. R.~C.} {\sc and} {\sc Korn, R.} (1998).
\newblock Optimal index tracking under transaction costs and impusle control.
\newblock {\em International Journal of Theoretical and Applied
  Finance\/}~\textbf{1},~3, 315--330.

\bibitem{Chao92}
{\sc Chao, H.} (1992).
\newblock The {EOQ} model with stochatic demand and discounting.
\newblock {\em European Journal of Operational Research\/}~\textbf{59},~3,
  434--443.

\bibitem{Constantinides76}
{\sc Constantinides, G.~M.} (1976).
\newblock Stochastic cash management with fixed and proportional transaction
  costs.
\newblock {\em Management Science\/}~\textbf{22},~12, 1320--1331.

\bibitem{ConstantinidesRichard78}
{\sc Constantinides, G.~M.} {\sc and} {\sc Richard, S.} (1978).
\newblock Existence of optimal simple policies for discounted-cost inventory
  and cash management in continous time.
\newblock {\em Operations Research\/}~\textbf{26},~4, 620--636.

\bibitem{DaiYao11b}
{\sc Dai, J.~G.} {\sc and} {\sc Yao, D.} (2011).
\newblock Optimal control of {Brownian} inventory models with convex holding
  cost: discounted cost.
\newblock Preprint.

\bibitem{Dixit91}
{\sc Dixit, A.} ({1991}).
\newblock {A Simplified Treatment of the Theory of Optimal Regulation of
  {Brownian}-motion}.
\newblock {\em {Journal of Economic Dynamics \& Control}\/}~\textbf{{15}},~{4},
  {657--673}.

\bibitem{FeinbergLewis07}
{\sc Feinberg, E.~A.} {\sc and} {\sc Lewis, M.~E.} (2007).
\newblock Optimality inequalities for average cost {M}arkov decision processes
  and the stochastic cash balance problem.
\newblock {\em Mathematics of Operations Research\/}~\textbf{32},~4, 769--783.

\bibitem{FengMuthuraman10}
{\sc Feng, H.} {\sc and} {\sc Muthuraman, K.} (2010).
\newblock A computational method for stochastic impulse control problems.
\newblock {\em Mathematics of Operations Research\/}~\textbf{35},~4, 830--850.

\bibitem{har85}
{\sc Harrison, J.~M.} (1985).
\newblock {\em Brownian Motion and Stochastic Flow Systems}.
\newblock Wiley, New York.

\bibitem{HarrisonSellkeTaksar83}
{\sc Harrison, J.~M.}, {\sc Sellke, T.~M.}, {\sc and} {\sc Taksar, M.~I.}
  (1983).
\newblock Impulse control of {Brownian} motion.
\newblock {\em Mathematics of Operations Research\/}~\textbf{8},~3, 454--466.

\bibitem{HarrisonTaksar83}
{\sc Harrison, J.~M.} {\sc and} {\sc Taksar, M.~I.} (1983).
\newblock Instantaneous control of {Brownian} motion.
\newblock {\em Mathematics of Operations Research\/}~\textbf{8},~3, 439--453.

\bibitem{WoongheeTimHuh11}
{\sc Huh, W.~T.}, {\sc Janakiraman, G.}, {\sc and} {\sc Nagarajan, M.} (2011).
\newblock Average cost single-stage inventory models: An analysis using a
  vanishing discount approach.
\newblock {\em Operations Research\/}~\textbf{59},~1, 143--155.

\bibitem{KumarKumar04}
{\sc Kumar, S.} {\sc and} {\sc Muthuraman, K.} (2004).
\newblock A numerical method for solving singular stochastic control problems.
\newblock {\em Operations Research\/}~\textbf{52},~4, 563--582.

\bibitem{OrmeciDaiVandeVate08}
{\sc Ormeci, M.}, {\sc Dai, J.~G.}, {\sc and} {\sc Vande~Vate, J.} (2008).
\newblock Impulse control of {B}rownian motion: the constrained average cost
  case.
\newblock {\em Operations Research\/}~\textbf{56},~3, 618--629.

\bibitem{PlehnDujowich05}
{\sc {Plehn-Dujowich}, J.~M.} (2005).
\newblock The optimality of a control band policy.
\newblock {\em Review of Economic Dyanmics\/}~\textbf{8},~4, 877--901.

\bibitem{PliskaSuzuki04}
{\sc Pliska, S.} {\sc and} {\sc Suzuki, K.} ({2004}).
\newblock {Optimal tracking for asset allocation with fixed and proportional
  transaction costs}.
\newblock {\em {Quantitative Finance}\/}~\textbf{{4}},~{2}, {233--243}.

\bibitem{Protter05}
{\sc Protter, P.~E.} (2005).
\newblock {\em Stochastic Integration and Differential Equations}. Stochastic
  Modelling and Applied Probability, Vol.~\textbf{21}.
\newblock Springer-Verlag, Berlin.
\newblock Second edition. Version 2.1, Corrected third printing.

\bibitem{Richard77}
{\sc Richard, S.~F.} (1977).
\newblock Optimal impulse control of a diffusion process with both fixed and
  proportional costs of control.
\newblock {\em SIAM Journal of Control and Optimization\/}~\textbf{15},~1,
  79--91.

\bibitem{schal93}
{\sc Sch{\"a}l, M.} (1993).
\newblock Average optimality in dynamic programming with general state space.
\newblock {\em Mathematics of Operations Research\/}~\textbf{18},~1, 163--172.

\bibitem{Stokey09}
{\sc Stokey, N.~L.} (2009).
\newblock {\em The Economics of Inaction}.
\newblock Princeton University Press, Princetion and Oxford.

\bibitem{Sulem86}
{\sc Sulem, A.} (1986).
\newblock A solvable one-dimensional model of a diffusion inventory system.
\newblock {\em Mathematics of Operations Research\/}~\textbf{11},~1, 125--133.

\bibitem{Taksar85}
{\sc Taksar, M.~I.} (1985).
\newblock Average optimal singular control and a related stopping problem.
\newblock {\em Mathematics of Operations Research\/}~\textbf{10},~1, 63--81.

\bibitem{Tsiddon93}
{\sc Tsiddon, D.} ({1993}).
\newblock {The (Mis)Behaviour of the Aggregate Price-Level}.
\newblock {\em {Review of Economic Studies}\/}~\textbf{{60}},~{4}, {889--902}.

\bibitem{Vickson86}
{\sc Vickson, R.~G.} (1986).
\newblock A single product cycling problem under {Brownian} motion demand.
\newblock {\em Management Science\/}~\textbf{32},~10, 1336--1345.

\bibitem{Yao10}
{\sc Yao, D.} (2010).
\newblock Inventory control problem with {Brownian} motion demand.
\newblock Ph.D. thesis, Academy of Mathematics and Systems Science, Chinese
  Academy of Sciences.

\end{thebibliography}

\end{document}